\newcommand{\Hom}{\operatorname{Hom}}
\newcommand{\NN}{\mathbb{N}}
\DeclareMathOperator{\Ass}{Ass}
\newcommand{\hght}{\operatorname{ht}}
\newcommand{\eh}{\operatorname{e}}
\newcommand{\ehk}{\eh_{HK}}
\newcommand{\mf}{\mathfrak}
\newcommand{\frq}[1]{{#1}^{[p^e]}}
\newcommand{\frpe}[1]{{#1}^{[p^e]}}
\DeclareMathOperator{\length}{lg}
\DeclareMathOperator{\rank}{rank}
\DeclareMathOperator{\coker}{coker}
\DeclareMathOperator{\fsig}{s}
\DeclareMathOperator{\asp}{\ell}
\DeclareMathOperator{\Minh}{Minh}
\DeclareMathOperator{\Ann}{Ann}
\DeclareMathOperator{\Supp}{Supp}
\DeclareMathOperator{\Spec}{Spec}
\DeclareMathOperator{\inv}{\epsilon}
\newcommand{\red}[1]{#1_{\operatorname {red}}}
\newtheorem{theorem}{Theorem}
\numberwithin{theorem}{section}
\newtheorem{lemma}[theorem]{Lemma}
\newtheorem{proposition}[theorem]{Proposition}
\newtheorem{corollary}[theorem]{Corollary}
\newtheorem*{statement*}{Statement}
\newtheorem*{theorem*}{Theorem}
\newtheorem*{lemma*}{Lemma}
\newtheorem*{fact*}{Fact}
\theoremstyle{definition}
\newtheorem{definition}[theorem]{Definition}
\newtheorem*{definition*}{Definition}
\newtheorem*{example*}{Example}
\newtheorem{remark}[theorem]{Remark}
\newtheorem{claim}{Claim}
\begin{document}

\title{An invitation to equimultiplicity of F-invariants}

\author{Ilya Smirnov}
\address{BCAM -- Basque Center for Applied Mathematics, Bilbao, Spain \quad and \quad IKERBASQUE, Basque Foundation for Science, Bilbao, Spain}
\email{ismirnov@bcamath.org}

\maketitle

\begin{center}
{\textit{Dedicated to Craig and Mel  for putting the wind in our sails. }}
\end{center}

\section{Introduction}

This note grew from the lectures that I delivered at the ICTP school, but the content has changed a lot. Since there are several good introductions to F-invariants (such as \cite{MaPolstraBook, CaminataDe, HunekeSurvey}), I decided to omit the proofs of the basic results and to focus instead on the notion of \emph{equimultiplicity} and its role in the theory of Hilbert--Kunz multiplicity and F-signature. My motivation was the lack of  a good introduction to equimultiplicity in general and the feeling that there is still a lot to be done in the study of equimultiplicity for \emph{F-invariants}, i.e., invariants derived from the Frobenius. 

\emph{Equimultiplicity} is defined for a numerical invariant of local ring, let $\inv$ be one of these. By localizing at primes, $\inv$ defines a function on the spectrum. A prime $\mf p \in \Spec R$ is $\inv$-\emph{equimultiple} if $\inv(\mf p) = \inv(\mf q)$ for all $\mf p \subseteq \mf q$; this condition appears naturally from the stratification of the spectrum by the values of $\inv$.  In order to work with $\inv$-equimultiplicity it is essential to find convenient necessary and sufficient conditions, perhaps a criterion, for $\mf p$ to be $\inv$-equimultiple. In Section~\ref{sec: equimultiplicity intro}, I will discuss in details an equimultiplicity approach to the following three problems:
\begin{enumerate}
\item showing that an extremal value (which is $1$ for the invariants considered here) of $\inv$ determines whether a ring is regular,
\item proving that $\inv$ is strongly upper semicontinuous,
\item resolving singularities. 
\end{enumerate}

This note pursues the first two applications for Hilbert--Kunz multiplicity and F-signature. It should be stressed that equimultiplicity theories for multiplicity and the Hilbert--Samuel function were crucial in the resolution of singularities in characteristic $0$. It could happen that one day F-invariants will serve a similar role in the resolution of singularities in positive characteristic -- this is a good enough reason to study their equimultiplicity. With this point of view, the applications in this note are a useful byproduct and a test of tools.

\subsection*{The structure of this note}
In Section~\ref{sec: equimultiplicity intro}, I will start with a brief overview of  equimultiplicity and its main uses. Section~\ref{sec: F-intro} introduces F-signature and Hilbert--Kunz multiplicity and can be safely skipped by experienced readers. 
Section~\ref{sec: F-signature} considers equimultiplicity for F-signature, it is
largely taken from my work with Polstra \cite{PolstraSmirnov}.

The following two sections are more novel. In Section~\ref{convergence} I will prove a uniform convergence estimate for Hilbert--Kunz multiplicity and present its applications; the treatment here is a bit different and inspired by Huneke's survey \cite{HunekeSurvey}. Building on the tools obtained through uniform convergence,  Section~\ref{sec: Hilbert-Kunz} develops a theory of Hilbert--Kunz multiplicity for primes of dimension $1$. An important difference with \cite{SmirnovEqui} is that it is not assumed that $R/\mf p$ is regular. 
This leads to two novel applications: a rigidity theorem for Hilbert--Kunz multiplicity in weakly F-regular rings (\cite{PolstraSmirnov} dealt with strongly F-regular rings) and a different proof of the Watanabe--Yoshida theorem. 
This proof is a modification of the proof given by Huneke and Yao in \cite{HunekeYao} -- I replaced their use of the symbolic power $\mf p^{(n)}$ by $(\frq{\mf p})^*$, this ideal is $\mf p$-primary due to equimultiplicity. This change makes my proof more technical, but it also makes the proof more \emph{conceptual} as it fits in the framework that can be applied to other invariants. The note is finished with suggestions for further research.

\subsection*{Acknowledgments}
I thank Linquan Ma for many fruitful conversations, Thomas Polstra for collaborating on \cite{PolstraSmirnov}, and Craig Huneke, Austyn Simpson, and the anonymous referee for helping me improve this manuscript.
I am indebted to Craig Huneke and Mel Hochster for their mathematics and mentorship through the years. Like most papers in ``characteristic $p$ methods'', this note has something from Mel and Craig on almost every page. I want to highlight that the presented necessary condition for Hilbert--Kunz equimultiplicity is given in terms of \emph{tight closure}.

This note grew from my thesis, the part developed while Craig was still in Kansas, and it is influenced by every weekly meeting we had back then. In particular, I still remember clearly that during one meeting I showed him that equimultiplicity implies that the saturation of Frobenius powers was contained in their tight closures and Craig suggested that this should imply that the tight closure itself should be saturated, see Theorem~\ref{thm: equimultiple property}. At that time, Craig also introduced me to Lipman's article on equimultiplicity \cite{Lipman} which had a lot of influence on me through the years. 


\section{Equimultiplicity}\label{sec: equimultiplicity intro}

The purpose of this section is to briefly discuss the notion of equimultiplicity and its potential applications. 

First, in the following a local ring $(R, \mf m)$ is defined 
as a Noetherian unital ring such that the set of all non-invertible elements forms an ideal $\mf m$. Let $\inv$ be an invariant of local rings, it defines a function on the spectrum of a Noetherian ring $R$ by setting $\inv (\mf p) = \inv (R_\mf p)$. 

If $\inv$ takes values in a totally ordered set $\Lambda$, it is desirable that $\inv\colon \Spec R \to \Lambda$ is a map of posets. 
This property comes from the intuitive expectation that if $\mf p \subseteq \mf q \in \Spec R$, then the singularity of $R_\mf p$ cannot be worse than the singularity of $R_\mf q$. For example, the Hilbert--Samuel multiplicity, $\eh(R)$, is a positive integer, so the 
inequality becomes $\eh(\mf p) \leq \eh(\mf q)$.
For some other invariants, such as F-signature, the inequality should be opposite. 

A prime $\mf p$ is $\inv$-equimultiple if $\inv(\mf p) = \inv (\mf q)$ for all $\mf p \subseteq \mf q$; the intuitive meaning is that  the singularity, as measured by $\inv$, does not change along $\mf p$. In order to work with this condition, it is necessary to characterize it in a more intrinsic way. 
As an example, the following theorem presents the classical case of Hilbert--Samuel multiplicity.

\begin{theorem}[{\cite[Theorem~2]{Dade}, see also \cite[Corollary, page 121]{Lipman}}]\label{thm: multiplicity criterion}
Let $(R, \mf m)$ be a formally equidimensional local ring and $\mf p$ be a prime ideal such that $R/\mf p$ is regular. 
Then $\eh (\mf m) = \eh (\mf p)$ if and only if 
the analytic spread of $\mf p$, $\asp(\mf p)$, is minimal, i.e., the dimension of the fiber cone $\oplus \mf p^n/\mf m\mf p^n$ is equal to $\hght \mf p$.
\end{theorem}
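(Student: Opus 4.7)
The plan is to translate the multiplicity equality into the existence of a ``joint reduction'' of $\mf m$ coming from a minimal reduction of $\mf p$, and then to run the implication backward using the associativity formula. After a faithfully flat extension $R \to R[t]_{\mf m[t]}$ I may assume the residue field is infinite, so minimal reductions exist and realize exactly $\asp$ generators. Set $h = \hght \mf p$ and $d = \dim R/\mf p$, and lift a regular system of parameters of $R/\mf p$ to elements $x_1, \ldots, x_d \in \mf m$, so that $\mf m = (\mf p, \ul x)$. Formal equidimensionality supplies $\dim R = h + d$, which is used throughout.

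For $(\Leftarrow)$, choose a minimal reduction $J \subseteq \mf p$ of $\mf p$, generated by $h$ elements. A direct monomial expansion of $(\mf p, \ul x)^{n+1}$, using $J \mf p^n = \mf p^{n+1}$ for $n \gg 0$, shows that $(J, \ul x)$ is a reduction of $\mf m$. Since $\sqrt{J} = \mf p$ gives $\sqrt{(J, \ul x)} = \mf m$, and $(J, \ul x)$ has $h + d = \dim R$ generators, it is a parameter ideal of $R$, whence $\eh(\mf m) = \eh((J, \ul x); R)$. Now apply the associativity formula for Hilbert--Samuel multiplicity of parameter ideals in a formally equidimensional ring: formal equidimensionality restricts the sum to maximal-dimensional minimal primes of $R/(J, \ul x)$, which thanks to the regularity of $R/\mf p$ and the shape of $\ul x$ collapses to a single contribution from $\mf p$, yielding
\[
\eh((J, \ul x); R) \;=\; \eh(JR_\mf p; R_\mf p) \cdot \eh(\ul x; R/\mf p) \;=\; \eh(\mf p) \cdot 1,
\]
the first factor by Rees's theorem (since $J$ is a reduction of $\mf p$), the second because $\ul x$ is a regular sop of $R/\mf p$.

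For $(\Rightarrow)$, the inequality $\eh(\mf p) \leq \eh(\mf m)$ holds automatically in a formally equidimensional ring by Nagata's semicontinuity. Assuming equality, the strategy is to \emph{reverse} the computation of $(\Leftarrow)$. For any $J \subseteq \mf p$ generated by $h$ generic elements, $JR_\mf p$ is $\mf p R_\mf p$-primary, $(J, \ul x)$ is $\mf m$-primary with $\dim R$ generators, and the associativity identity still gives $\eh((J, \ul x); R) = \eh(JR_\mf p; R_\mf p)$. Combining with $\eh(\mf m) \leq \eh((J, \ul x); R)$ (because $(J, \ul x) \subseteq \mf m$) and the hypothesis $\eh(\mf m) = \eh(\mf p)$, one shows that a generic $(J, \ul x)$ is actually a minimal reduction of $\mf m$ and $\eh(JR_\mf p) = \eh(\mf p R_\mf p)$. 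The equality case of Rees's theorem then forces $JR_\mf p$ to be a reduction of $\mf p R_\mf p$; a persistence argument, again leveraging formal equidimensionality, promotes this to $J$ being a reduction of $\mf p$ globally. Hence $\asp(\mf p) \leq h$, and combined with the universal bound $\asp(\mf p) \geq h$ we conclude.

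The principal obstacle is the $(\Rightarrow)$ direction, specifically the ``joint reduction'' step: extracting from the global multiplicity coincidence $\eh(\mf m) = \eh(\mf p)$ a \emph{particular} minimal reduction of $\mf m$ that decomposes as $(J, \ul x)$ with $J \subseteq \mf p$ having only $h$ generators, and then descending equality of multiplicities at $\mf p$ to an honest reduction of $\mf p$ in $R$. This is the substantive geometric content of the theorem. The associativity step in $(\Leftarrow)$ is also delicate, since it requires formal equidimensionality to exclude contributions from minimal primes of $R/(J, \ul x)$ that are not of maximal dimension or do not sit above $\mf p$; without the formal equidimensionality hypothesis, hidden components can destroy both the factorization $\eh(JR_\mf p) \cdot \eh(\ul x; R/\mf p)$ and the identification $\eh(\mf p) \leq \eh(\mf m)$.
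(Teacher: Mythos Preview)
The paper does not give its own proof of this theorem: it is stated with attribution to \cite[Theorem~2]{Dade} and \cite[Corollary, page 121]{Lipman} and then immediately followed by a Remark, with no proof environment. So there is nothing in the paper to compare your argument against.

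As for your outline itself: the $(\Leftarrow)$ direction is essentially the standard argument and is correct as sketched. Your use of the associativity formula for the parameter ideal $(J,\ul x)$ is fine because $\sqrt{J}=\mf p$ forces $\mf p$ to be the unique minimal prime over $J$, so the sum collapses; formal equidimensionality is what guarantees $\dim R = h+d$ so that $(J,\ul x)$ really is a system of parameters.

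The $(\Rightarrow)$ direction, however, is not a proof but a plan, and you yourself flag the gap. Concretely: from $\eh((J,\ul x);R)=\eh(\mf m)$ and Rees's theorem you get that $(J,\ul x)$ is a reduction of $\mf m$, and from $\eh(JR_\mf p)=\eh(\mf p R_\mf p)$ you get that $JR_\mf p$ is a reduction of $\mf p R_\mf p$. Neither of these, separately or together, yields that $J$ is a reduction of $\mf p$ in $R$ by any ``persistence'' principle I know; integral closure does not in general descend from a localization. What is actually needed here is the substance of B\"oger's extension of Rees's theorem (or equivalently the analysis in Dade/Lipman): one shows that $\asp(\mf p)=h$ is equivalent to the existence of a system of parameters $(y_1,\ldots,y_h,x_1,\ldots,x_d)$ with $y_i\in\mf p$ such that $\eh((y_1,\ldots,y_h,x_1,\ldots,x_d);R)=\eh((\mf p,x_1,\ldots,x_d);R)$, and the passage from this to a reduction of $\mf p$ uses the structure of the Rees algebra / fiber cone rather than a localization argument. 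Your sketch would be complete if you either invoked B\"oger's theorem explicitly at this point or supplied the Rees-algebra argument; as written, the ``persistence argument'' is the whole theorem.
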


\begin{remark}
In literature, any ideal $I$ such that $\asp(I) = \hght (I)$ is called equimultiple. 
While this condition has a connection with multiplicities (see \cite[Theorem~4]{Lipman}), it is not equivalent to $\eh$-equimultiplicity if $R/\mf p$ is not regular.
If $R/\mf p$ is not regular, I do not know of any necessary condition for Hilbert--Samuel equimultiplicity. 

For example, any prime ideal in a regular ring is $\eh$-equimultiple by the definition, but the height and the analytic spread need not be equal.
A family of examples is given by defining ideals of a monomial curve with $3$ generators, such as the defining prime $\mf p = (x^3 - yz, y^2- xz, z^5  - x y^3) \subset k[x,y, z]$ of the monomial curve $k[t^3, t^4, t^5]$. 
As discussed on page 258 of \cite{HunekeDseq} the defining primes are generated by a $d$-sequence, so the fiber cone is isomorphic to a polynomial ring in $3$ variables by \cite[Theorem~2.2]{HunekeDseq}. 
\end{remark}

While the two invariants of this note, Hilbert--Kunz multiplicity and F-signature, 
take values in real numbers, it should be noted that a theory of equimultiplicity can be developed for invariants with values in a more general poset. Two classically studied examples are the Hilbert function, which takes values in sequences of positive integers with an appropriate order, see \cite{Bennett}, and the Hilbert polynomial.

\subsection{Connection with semicontinuity}

\begin{definition}
Let $\inv \colon X \to \mathbb{R}$ be a function on a topological space $X$. 
We say that $\inv$ is \emph{upper semicontinuous} if the subset $X_{< a} \coloneqq \{x \in X \mid \inv(x) < a\}$
is open for all $a \in \mathbb{R}$. We say that $\inv$ is \emph{strongly upper semicontinuous} if any set of the form $X_{\leq a} \coloneqq \{x \in X \mid \inv(x) \leq a\}$ is open. Since $X_{< a} = \cup_{b < a} X_{\leq b}$, strong upper semicontinuity is indeed a stronger notion. 

Lower semicontinuity and strong lower semicontinuity are defined similarly, by using $X_{> a}$ and $X_{\geq a}$. 
\end{definition}

Semicontinuity is a notion of continuity 
appropriate for algebraic geometry and it has a number of important consequences in the theory of F-invariants, see for example \cite{EnescuShimomoto, SPYGlobal, SmirnovTucker} and the discussion in \ref{sub: blowup}. 
In addition, strong semicontinuity provides a constructible stratification, \emph{i.e.,} 
the sets $X_{= a} \coloneqq X_{\leq a} \cap X_{\geq a}$ are locally closed. 

In practice, it might be easier to establish strong semicontinuity, for example, due to the following connection with equimultiplicity that comes from 
Nagata's criterion for openness. The criterion asserts that a set $U \subseteq \Spec R$ 
is open if and only if for all pairs $\mf p \subseteq \mf q \in \Spec R$ 
the following conditions hold 
\begin{enumerate}
\item if $\mf q \in U$ then $\mf p \in U$, and
\item if $\mf p \in U$ there exists an element $s \notin \mf p$
such that whenever $s \notin \mf q$ then $\mf q \in U$. 
\end{enumerate}
By applying this to $X_{\leq a}$, the following characterization follows immediately.

\begin{proposition}\label{prop: Nagata}
The function $\inv \colon \Spec R \to \mathbb {R}$ is strongly upper semicontinuous 
if and only if $\inv$ satisfies the localization inequality, \emph{i.e.,} $\inv (R_\mf p) \leq \inv(R_\mf q)$ when $\mf p \subseteq \mf q$, and for any prime $\mf p$ there is $s\notin \mf p$ such that $\mf pR_s$ is $\inv$-equimultiple. 
\end{proposition}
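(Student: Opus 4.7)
The plan is to apply Nagata's criterion to the sublevel sets $X_{\leq a}$ for every $a \in \mathbb{R}$, translating each of the two conditions of the criterion into one of the two conditions in the proposition. Both directions come out as a direct unpacking.

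For the ``only if'' direction, I would assume $\inv$ is strongly upper semicontinuous and fix a pair $\mf p \subseteq \mf q$. Setting $a = \inv(\mf q)$, the set $X_{\leq a}$ is open and contains $\mf q$; condition (1) of Nagata's criterion applied to this set forces $\mf p \in X_{\leq a}$, which is exactly the localization inequality $\inv(\mf p) \leq \inv(\mf q)$. For the equimultiplicity statement, fix any $\mf p \in \Spec R$ and put $a = \inv(\mf p)$. Since $\mf p \in X_{\leq a}$, condition (2) of Nagata's criterion produces $s \notin \mf p$ such that every prime $\mf q \supseteq \mf p$ with $s \notin \mf q$ lies in $X_{\leq a}$; combining $\inv(\mf q) \leq a = \inv(\mf p)$ with the already established localization inequality $\inv(\mf p) \leq \inv(\mf q)$ gives $\inv(\mf q) = \inv(\mf p)$, i.e., $\mf p R_s$ is $\inv$-equimultiple in $R_s$.

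For the ``if'' direction, I would assume the two conditions of the proposition and verify Nagata's criterion for each $X_{\leq a}$. Condition (1) is immediate from the localization inequality: if $\mf p \subseteq \mf q$ and $\inv(\mf q) \leq a$, then $\inv(\mf p) \leq \inv(\mf q) \leq a$, so $\mf p \in X_{\leq a}$. Condition (2) follows from the equimultiplicity hypothesis: given $\mf p \in X_{\leq a}$, choose $s \notin \mf p$ so that $\mf pR_s$ is $\inv$-equimultiple; then for every $\mf q \supseteq \mf p$ with $s \notin \mf q$ one has $\inv(\mf q) = \inv(\mf p) \leq a$, placing $\mf q$ in $X_{\leq a}$.

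There is no real obstacle here beyond keeping the two quantifications over primes straight in the statement of Nagata's criterion; the argument is a routine unpacking of definitions, with equimultiplicity on the ring side corresponding precisely to the ``locally closed in its closure'' clause on the topological side.
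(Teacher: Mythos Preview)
Your proof is correct and is precisely the direct unpacking the paper has in mind: the paper does not give a proof at all, merely stating that the proposition follows immediately from applying Nagata's criterion to each $X_{\leq a}$, and your argument is exactly that application written out in full.
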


With a convenient sufficient condition for $\inv$-equimultiplicity, such as Theorem~\ref{thm: multiplicity criterion}, 
Proposition~\ref{prop: Nagata} can be easily used to show semicontinuity. 

\begin{corollary}
Multiplicity is upper semicontinuous on the spectrum of an excellent ring. 
\end{corollary}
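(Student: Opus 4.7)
The plan is to derive the corollary from Proposition~\ref{prop: Nagata} by verifying its two hypotheses using Theorem~\ref{thm: multiplicity criterion} and the openness properties provided by excellence. First, the localization inequality $\eh(R_\mf p) \leq \eh(R_\mf q)$ for $\mf p \subseteq \mf q$ is the classical Nagata--Ratliff localization theorem for multiplicity, valid whenever $R_\mf q$ is formally equidimensional, which is automatic on an excellent ring.

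For the second hypothesis, I would fix a prime $\mf p$ and produce $s \notin \mf p$ so that the three hypotheses of Theorem~\ref{thm: multiplicity criterion} hold at every $\mf q \supseteq \mf p$ with $s \notin \mf q$. Excellence makes the regular locus of $R/\mf p$ open in $\Spec R/\mf p$, producing $s_1 \notin \mf p$ with $(R/\mf p)_{s_1}$ regular; likewise the formally equidimensional locus in $V(\mf p)$ is open on an excellent ring, contributing a factor $s_2$. The heart of the argument is arranging $\asp(\mf p R_\mf q) = \hght \mf p$ throughout a distinguished neighborhood, which I would do by producing a minimal reduction of $\mf p$ that survives localization. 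Concretely, pick a minimal reduction $(x_1,\ldots,x_h) R_\mf p$ of the maximal ideal $\mf p R_\mf p$ with $h = \dim R_\mf p = \hght \mf p$; such a reduction exists because the analytic spread of the maximal ideal of a local ring equals its dimension (replacing $R$ by $R[t]_{\mf p R[t]}$ first if the residue field is finite, which preserves both multiplicity and analytic spread by faithful flatness). Lift to $x_i \in \mf p$ and pick $n$ with $\mf p^{n+1} R_\mf p = (x_1,\ldots,x_h)\mf p^n R_\mf p$; clearing denominators in this finitely generated identity produces $s_3 \notin \mf p$ such that the same relation holds in $R_{s_3}$. For any $\mf q \supseteq \mf p$ avoiding $s_3$, the images of the $x_i$ generate a reduction of $\mf p R_\mf q$, forcing $\asp(\mf p R_\mf q) \leq h$, while $\asp \geq \hght$ is automatic, so equality holds.

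Setting $s = s_1 s_2 s_3$, Theorem~\ref{thm: multiplicity criterion} now applies at every $\mf q \in D(s) \cap V(\mf p)$, yielding $\eh(R_\mf q) = \eh(R_\mf p)$, so $\mf p R_s$ is $\eh$-equimultiple. Proposition~\ref{prop: Nagata} then delivers strong (and hence ordinary) upper semicontinuity of $\eh$ on $\Spec R$. The subtlest ingredient is the openness of the formally equidimensional locus on an excellent ring, which is standard but not entirely elementary; the residue-field issue in constructing a minimal reduction is dispatched by the usual faithfully-flat extension, and the rest is routine bookkeeping with reductions and localization.
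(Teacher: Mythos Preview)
Your proposal is correct and follows essentially the same route as the paper: verify the two hypotheses of Proposition~\ref{prop: Nagata} by invoking the localization inequality for multiplicity and then spreading the conditions of Theorem~\ref{thm: multiplicity criterion} (regularity of $R/\mf p$ and minimality of the analytic spread) from $\mf p$ to a basic open set. The paper's proof is terser---it cites Dade for localization, opens the regular locus of $R/\mf p$ via excellence, and asserts that the equality $\asp(\mf p R_\mf p)=\hght\mf p$ spreads to some $R_v$---whereas you supply the mechanism (a minimal reduction that survives localization) and you are more careful about the formal-equidimensionality hypothesis of Theorem~\ref{thm: multiplicity criterion}, which the paper leaves implicit.
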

\begin{proof}
It was shown in \cite{Dade} that multiplicity localizes if $R$ is excellent\footnote{By a surprising result of \cite{LL}, the localization property for all rings is equivalent to the notorious conjecture of Lech on multiplicity in faithfully flat extensions.}. Furthermore, for any prime $\mf p$ 
the quotient $R/\mf p$ is still excellent, so its regular locus is open by the definition.
Thus, there is an element $u \notin \mf p$
such that $(R/\mf p)_u$ is regular. Second, the analytic spread of $\mf pR_\mf p$ is $\dim R_\mf p = \hght \mf p$, so it is possible to find $v \notin \mf p$ 
so that the same equality holds in $R_v$.
It remains to apply Theorem~\ref{thm: multiplicity criterion} in $R_{uv}$. 
\end{proof}

On the other hand, Corollary~\ref{not locally constant} will use 
Proposition~\ref{prop: Nagata} to show that Hilbert--Kunz multiplicity is \emph{not} strongly upper semicontinuous.

\subsection{Connection to resolution of singularities} \label{sub: blowup}
The notion of equimultiplicity is also closely connected to the main approach to the resolution of singularities, and essentially originates, in the case of Hilbert function,  from Hironaka's work \cite{Hironaka}. I will briefly outline this connection, and, for simplicity, I will assume that $\inv(R)$ is upper semicontinuous (a lower semicontinuous function is completely analogous). 

Since $X = \Spec R$ is quasi-compact, the open cover $X = \cup X_{< a}$ is finite, hence $\inv$ must attain its supremum on $X$. Let $M$ to denote the maximal value of $\inv$ on $X$. By semicontinuity, the maximal locus of $\inv$, i.e., the set $X_{\max} \coloneqq \{\mf p \mid \inv (\mf p) = M\} = X \setminus X_{< M}$, is closed. 
As measured by $\inv$,  $X_{\max}$ is the locus of the worst singularity, thus
a natural attempt to improve the singularity is to blow up $X_{\max}$ or its component. 
The key difficulty is to show that singularity improves after blowing up. 

Because any prime ideal in $X_{\max}$ is $\inv$-equimultiple, 
$\inv$-equimultiplicity should help to understand the effect of the blowing up. The theories of equimultiplicity for Hilbert--Samuel multiplicity and Hilbert functions were developed for this reason. As a guideline consider the result of Dade that controls the multiplicity after blowing up an $\eh$-equimultiple ideal.

\begin{theorem}[{\cite[Theorem~3]{Dade}, \cite[Corollary~31.2]{EquimultiplicityBook}}]\label{thm: Dade blowup}
Let $(R, \mf m)$ be a formally equidimensional local ring. 
If $\mf p$ is a prime ideal such that $\asp (\mf p) = \hght \mf p$ 
then any local ring $S$ of the blowing up\footnote{In other words, $S$ is a homogeneous localization of the Rees algebra at a relevant prime that contracts to $\mf m$.} of $\mf p$ that dominates $R$  satisfies the inequality
\[
\eh(S) \leq \eh (R/\mf p) \eh (R_\mf p).
\]
\end{theorem}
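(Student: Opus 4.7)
The plan is to exploit the minimal reduction furnished by $\asp(\mf p) = \hght \mf p = h$ in order to factor the problem through a blowup along $h$ analytically independent parameters, where each affine chart has a transparent structure. After replacing $R$ by its completion (formal equidimensionality and Hilbert--Samuel multiplicity are preserved) and further enlarging to infinite residue field via $R \to R(x)$, the hypothesis yields a minimal reduction $J = (f_1, \dots, f_h) \subseteq \mf p$ with $\mf p^{n+1} = J \mf p^n$ for $n \gg 0$. Consequently $R[Jt] \hookrightarrow R[\mf p t]$ is a module-finite inclusion of graded rings, inducing a finite morphism of blowups $\pi \colon \operatorname{Bl}_{\mf p}(R) \to \operatorname{Bl}_{J}(R)$. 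Given a local ring $S$ of $\operatorname{Bl}_{\mf p}(R)$ dominating $R$, let $T$ be its image in $\operatorname{Bl}_{J}(R)$; then $T$ also dominates $R$ and $T \hookrightarrow S$ is a finite local extension.

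The next step is to bound $\eh(T) \leq \eh(R/\mf p)$. Formal equidimensionality combined with $\hght(J) = \nu(J) = h$ forces $f_1, \dots, f_h$ to be analytically independent, so each standard affine chart of $\operatorname{Bl}_J(R)$ takes the form $A = R[T_2, \dots, T_h]/(f_i - f_1 T_i : 2 \leq i \leq h)$ (together with its analogs inverting other $f_i$), and satisfies $A/\mf p A \cong (R/\mf p)[T_2, \dots, T_h]$. Hence if $T = A_{\mathfrak{q}}$ with $\mathfrak{q} \cap R = \mf m$, then $T/\mf p T$ is a localization of a polynomial ring over $R/\mf p$ at a maximal ideal, and the associativity formula for multiplicity, together with the fact that polynomial extension preserves the multiplicity of the coefficient ring, yields $\eh(T) \leq \eh(R/\mf p)$. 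For the finite local extension $T \hookrightarrow S$, the associativity formula again gives $\eh(S) \leq d \cdot \eh(T)$, where $d$ is the generic rank of $S$ as a $T$-module. Localizing $\pi$ at $\mf p$ produces a finite morphism of blowups of the $\mf m$-primary ideal $\mf p R_{\mf p}$ and its minimal reduction $J R_{\mf p}$ inside $R_{\mf p}$, and a direct Hilbert--series comparison identifies its generic rank with $\eh(\mf p R_{\mf p}; R_{\mf p}) = \eh(J R_{\mf p}; R_{\mf p}) = \eh(R_{\mf p})$. Combining the two estimates completes the proof.

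The main obstacle, as I anticipate it, lies in the careful execution of these two associativity-type multiplicity estimates. The identification of the generic rank of $S$ over $T$ with $\eh(R_\mf p)$ requires the formal equidimensionality hypothesis to preclude embedded components in the Rees algebra extension which would otherwise contaminate the rank. The bound $\eh(T) \leq \eh(R/\mf p)$ is likewise subtle, as it relies on a transversality between the affine chart of $\operatorname{Bl}_J(R)$ and $V(\mf p)$, an incarnation of the analytic independence built into the equality $\asp(\mf p) = \hght \mf p$ and not available for an arbitrary ideal.
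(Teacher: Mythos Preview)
The paper does not prove this theorem; it is stated as background and attributed to \cite{Dade} and \cite{EquimultiplicityBook} without argument. So there is no proof in the paper to compare against.

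Your outline follows the classical route (reduce to a minimal reduction $J$ of $\mf p$, factor through $\operatorname{Bl}_J$, then combine two multiplicity estimates), which is essentially what one finds in Lipman's exposition and in Herrmann--Ikeda--Orbanz. The overall architecture is sound, and you correctly identify formal equidimensionality as the hypothesis that makes the analytic-independence and rank computations go through.

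That said, two steps are more delicate than your sketch suggests. First, the claim that the generic rank of $S$ over $T$ equals $\eh(R_\mf p)$ is not simply ``a direct Hilbert-series comparison'': the Rees algebras $R[Jt]$ and $R[\mf pt]$ have the \emph{same} fraction field when $R$ is a domain, so the generic rank of the finite extension of Rees algebras is $1$, not $\eh(R_\mf p)$. What one actually needs is the rank of $S$ as a $T$-module computed at the minimal primes of $T$, and these minimal primes lie over $\mf p$, not over $(0)$; it is the length $\length_{T_\mf P}(S_\mf P)$ at such $\mf P$ that recovers $\eh(R_\mf p)$, via the identification of the exceptional fibre with $\Proj$ of the associated graded ring and the fact that $J R_\mf p$ is a parameter ideal. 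Second, the inequality $\eh(T)\le \eh(R/\mf p)$ is really the inequality $\eh(T)\le \eh(T/f_1T)$ for the parameter $f_1$, and this can fail without unmixedness (e.g.\ $T=k[[x,y]]/(xy)$, $f_1=x$). You need that $T$ is again formally equidimensional---which does follow, but requires the theorem that blowing up an equimultiple ideal in a quasi-unmixed ring yields quasi-unmixed local rings. You flag both issues in your final paragraph, but the specific misidentification of the generic rank should be corrected.
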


This theorem only says that the singularity does not get worse, but 
multiplicity is a very crude measure. In fact, in characteristic $0$ one can build an invariant $\inv$ so that a resolution of singularities is obtained by consecutively blowing up the maximal locus of $\inv$. Loosely speaking, the constructions of $\inv$ originating from Hironaka's work adjoin to the Hilbert--Samuel function additional invariants until the resulting sequence of invariants $\inv$ decreases with each blow-up. I refer to \cite{EquimultiplicityBook} for more details about equimultiplicity in the Hilbert--Samuel theory.

\subsection{Detection of singularities}
Equimultiplicity also provides an approach to showing that a given invariant $\inv$ 
detects singularities. For simplicity, assume $\inv (R) \in [1, \infty)$, so that the goal is to show that if  $(R, \mf m)$ a local ring (perhaps with extra assumptions) and $\inv(\mf m) = 1$, then $R$ is regular. 
If $\inv$ localizes well, then the condition $\inv(R) = 1$ implies that every prime ideal in $R$ is $\inv$-equimultiple. 
This opens an approach by induction on the dimension of $R$: we may take a suitable $\mf p \neq \mf m$, then 
$R_\mf p$ is regular by induction. Thus the goal becomes to show that if $\mf p$ is $\inv$-equimultiple and $R_\mf p$ is regular, then $R$ must be regular.

In this note, this strategy is applied to Hilbert--Kunz multiplicity and F-signature. 
Interestingly, as Theorem~\ref{thm: multiplicity criterion} is limited to the situation where $R/\mf p$ is regular, one cannot use the same strategy to recover Nagata's original criterion of $\eh(R) = 1$.  

\section{An introduction to F-invariants}\label{sec: F-intro}
The purpose of this section is to discuss the basics of the positive characteristic 
commutative algebra and introduce Hilbert--Kunz multiplicity and F-signature. 
The proofs and more comprehensive treatments can be found in \cite{HunekeSurvey, MaPolstraBook, CaminataDe}.

A key result for the study of singularities in positive characteristic is the characterization of regular rings due to Kunz. Essentially all proofs showing 
that a certain invariant detects singularities utilize this famous theorem. 
In the theorem, $F_*^e R$ denotes an $R$-module obtained as the target 
of the $e$-th iterate of the Frobenius morphism $F^e \colon R \to R$.

\begin{theorem}[\cite{Kunz1}]\label{theorem Kunz}
Let $(R, \mf m)$ be a local ring of prime characteristic $p > 0$.
Then $R$ is regular if and only if $F^e_* R$ is a flat $R$-module for some (equivalently, all) positive integer $e$.
\end{theorem}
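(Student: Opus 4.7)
For the forward direction, my plan is Koszul-theoretic. Since $R$ is regular and hence Cohen--Macaulay, any regular system of parameters $x_1, \ldots, x_d$ is a regular sequence, as are its Frobenius powers $x_1^{p^e}, \ldots, x_d^{p^e}$. The Koszul complex $K_\bullet(x_1, \ldots, x_d; R)$ is therefore a free resolution of the residue field $k$. Computing $\Tor_i^R(k, F^e_* R)$ from this resolution and exploiting that the $R$-action on $F^e_* R$ factors through $F^e$, the tensored complex is identified, after the canonical abelian-group identification $F^e_* R = R$, with the Koszul complex $K_\bullet(x_1^{p^e}, \ldots, x_d^{p^e}; R)$, which is acyclic in positive degrees. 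Hence $\Tor_i^R(k, F^e_* R) = 0$ for $i > 0$, and flatness follows from the local criterion of flatness, applied after a reduction to an ideally-separated setting (e.g.\ via completion or under the F-finite hypothesis).

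For the reverse direction, which is the substantive one, I would reduce to the complete local case using that Frobenius flatness ascends and descends along the faithfully flat completion $R \to \widehat{R}$. In this setting I would combine flatness with Peskine--Szpiro's intersection theorem: flatness of $F^e_* R$ yields $\Tor_i^R(F^e_* R, M) = 0$ for every finitely generated $R$-module $M$ and every $i > 0$, and a Frobenius-descent argument in positive characteristic then forces every such $M$ to have finite projective dimension. Auslander--Buchsbaum--Serre then yields that $R$ is regular. An alternative, more concrete path uses Cohen's structure theorem to write $\widehat{R}$ as $S/I$ with $S$ regular local of dimension $\edim R$, and then leverages the interplay between flatness of $F^e_* R$ and the $p^e$-power map to force $I = 0$.

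The main obstacle is precisely the reverse direction. Flatness of Frobenius easily yields Tor-vanishing and regular-sequence statements, but extracting the full regularity of $R$ --- rather than merely Cohen--Macaulayness --- requires a nontrivial input that exploits both the flatness and the multiplicative peculiarity of $F^e$ (namely that it raises elements to $p^e$-th powers). This is typically achieved through a Peskine--Szpiro-type homological input or a careful Hilbert--Kunz length bookkeeping, and it is here that the genuine depth of Kunz's characterization resides.
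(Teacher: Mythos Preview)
The paper does not give its own proof of this theorem. Theorem~\ref{theorem Kunz} is stated with a bare citation to \cite{Kunz1}, and the surrounding text explicitly says that ``the proofs and more comprehensive treatments can be found in \cite{HunekeSurvey, MaPolstraBook, CaminataDe}.'' So there is nothing in the paper to compare your proposal against.

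On the merits of your sketch itself: the forward direction is the standard Koszul argument and is fine, though you should be careful with the invocation of the local criterion of flatness --- one typically argues directly that $\Tor_1^R(R/I, F_*^e R) = 0$ for every ideal $I$ (reduce to $\mf m$-primary $I$, filter $R/I$ by copies of $k$, and use the Tor-vanishing you already have), which avoids separatedness issues.

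Your reverse direction, as you yourself flag, is where the content lies, and your outline is too vague to count as a proof. Invoking ``Peskine--Szpiro's intersection theorem'' together with an unspecified ``Frobenius-descent argument'' does not by itself force finite projective dimension of every module; you would need to spell out exactly how flatness of $F^e$ transports a minimal free resolution of $k$ to one of $R/\mf m^{[p^e]}$ and then extract a contradiction from unbounded Betti numbers (this is Herzog's approach), or else carry out a genuine length-counting argument in the spirit of Kunz's original proof (flatness forces $\length(R/\mf m^{[p^e]}) = p^{e\dim R}$, after which one can conclude). Either route is workable, but neither is what you have written; at present the reverse direction is a list of relevant keywords rather than an argument.
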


If, in addition, $R$ is \emph{F-finite} (that is, $F_*^e R$ is finitely generated), then 
flatness is upgraded to freeness and the ``failure of freeness'' of $F_*^e R$
can be measured numerically to provide criteria for regularity. 
As an example, assume that $R$ is a domain with the field of fractions $L$ and define the generic rank of $M$ as its rank at the generic point, $\dim_{L} M \otimes_R L$. Consider two functions 
\[
f(M) = \frac{\min \{N \mid \text{ there is } R^{\oplus N} \to M \to 0\}} {\text{generic rank of } M} =
\frac{\text{minimal number of generators of } M} {\text{generic rank of } M} 
\]
and 
\[
g(M) = \frac{\max \{N \mid \text{ there is } M \to R^{\oplus N} \to 0\}}{\text{generic rank of } M} = 
\frac{\max \{N \mid M = \oplus^N R \oplus M'\}}{\text{generic rank of } M}.
\] 
It is easy to see that $f(M) \in [1, \infty)$, $g(M) \in [0, 1]$, and 
$f(M) = 1$ if and only if $M$ is free if and only if $g(M) = 1$. 

Kunz's theorem gives numerical tests for singularity through the application of either function to $F_*^e R$ for any $e$. But it happens that the values change with $e$ in a controllable way leading to the following definitions. 

\begin{definition}[Kunz \cite{Kunz2}, Monsky \cite{Monsky}]
Let $(R, \mf m)$ be an F-finite local domain of characteristic $p > 0$. 
Then the Hilbert--Kunz multiplicity of $R$ is 
\[
\ehk (R) = \lim_{e \to \infty} \frac{\text{minimal number of generators of } F_*^e R}{\text{generic rank of } F_*^e R}.
\]
\end{definition}

\begin{definition}[Smith--Van der Bergh \cite{SmithVdB}, Huneke--Leuschke \cite{HunekeLeuschke}]\label{def: Fsig}
Let $(R, \mf m)$ be an F-finite local domain of characteristic $p > 0$. 
Then the F-signature of $R$ is 
\[
\fsig (R) = \lim_{e \to \infty} \frac{\max \{N \mid F_*^e R = \oplus^N R \oplus M\}}{\text{generic rank of } F_*^e R}.
\]
\end{definition}

It requires work to show that these limits exist and also detect singularity. This will be discussed in Section~\ref{convergence}. 

\subsection{The generic rank theorem}
In order to work with these invariants it is convenient to restate them in a different way. 
The starting place is another fundamental result of Kunz \cite{Kunz2} that computes 
the generic rank. 

\begin{theorem}[Kunz]\label{thm: Kunz rank}
Let $(R, \mf m, k)$ be an F-finite local domain of dimension $d$.
Then for any $e > 0$ the generic rank of $F_*^e R$ is 
$p^{ed} [k^{1/p^e} : k]$. 
\end{theorem}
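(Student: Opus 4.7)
The plan is to avoid any appeal to completion or Cohen's structure theorem and instead compute the generic rank of $F_*^e R$ directly via Hilbert--Samuel multiplicity, where everything reduces to a clean length identity in characteristic $p$. Choose a system of parameters $x_1, \ldots, x_d$ of $R$ and set $\mf q = (x_1, \ldots, x_d)$. Since $R$ is a local domain and $F_*^e R$ is a finitely generated torsion-free $R$-module of full dimension $d$, the associativity formula for Hilbert--Samuel multiplicity gives
\[
\eh(\mf q; F_*^e R) = \rank(F_*^e R) \cdot \eh(\mf q; R),
\]
so the task reduces to computing $\eh(\mf q; F_*^e R)$.

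The computation rests on two elementary identities. First, because the $p^e$-th power map is additive in characteristic $p$, a direct check shows that $\mf q^n \cdot F_*^e R = F_*^e\bigl((\mf q^n)^{[p^e]}\bigr) = F_*^e\bigl((\frq{\mf q})^n\bigr)$, so that $F_*^e R / \mf q^n F_*^e R \cong F_*^e\bigl(R/(\frq{\mf q})^n\bigr)$ as $R$-modules. Second, for any finite-length $R$-module $N$ one has
\[
\length_R(F_*^e N) = [k^{1/p^e}:k] \cdot \length_R(N),
\]
which follows by dévissage from the case $N = k$, where $F_*^e k = k^{1/p^e}$ and F-finiteness ensures $[k^{1/p^e}:k] < \infty$.

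Combining the two identities yields $\length(F_*^e R / \mf q^n F_*^e R) = [k^{1/p^e}:k] \cdot \length(R/(\frq{\mf q})^n)$. Dividing by $n^d/d!$ and passing to the limit in $n$ converts this to the multiplicity equation $\eh(\mf q; F_*^e R) = [k^{1/p^e}:k] \cdot \eh(\frq{\mf q}; R)$. The final ingredient is the classical multiplicativity formula $\eh((x_1^{p^e}, \ldots, x_d^{p^e}); R) = p^{ed} \cdot \eh((x_1, \ldots, x_d); R)$ for parameter ideals, which delivers $\eh(\mf q; F_*^e R) = [k^{1/p^e}:k] \cdot p^{ed} \cdot \eh(\mf q; R)$; cancelling $\eh(\mf q; R)$ leaves the desired formula $\rank(F_*^e R) = p^{ed}[k^{1/p^e}:k]$. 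The main obstacle is really the bookkeeping behind the first identity — one must track how the Frobenius-twisted $R$-action on $F_*^e R$ sends the ordinary power $\mf q^n$ to the \emph{bracket} power $(\frq{\mf q})^n$; once that is in hand, everything else is standard multiplicity theory.
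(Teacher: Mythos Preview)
Your argument is correct and takes a genuinely different route from the paper's proof. The paper reduces, via completion and Cohen's structure theorem, to the case $R = k[[x_1,\ldots,x_d]]$, where it exhibits an explicit free basis of $F_*^e R$ over $R$. You instead stay in the original ring and extract the rank from Hilbert--Samuel multiplicity: the associativity formula gives $\eh(\mf q; F_*^e R) = \rank(F_*^e R)\,\eh(\mf q; R)$, while the identification $\mf q^n F_*^e R = F_*^e\bigl((\frq{\mf q})^n\bigr)$ together with the length identity $\length_R(F_*^e N) = [k^{1/p^e}:k]\,\length_R(N)$ yields $\eh(\mf q; F_*^e R) = [k^{1/p^e}:k]\,p^{ed}\,\eh(\mf q; R)$; cancelling the positive quantity $\eh(\mf q; R)$ finishes. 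The paper's approach has the virtue of producing an explicit basis in the regular case, but it must pass through completion, where the domain hypothesis and the behavior of generic rank require some care. Your approach avoids structure theory entirely, uses only standard multiplicity facts, and in fact the same computation would give $\rank_R(F_*^e M)$ for any finitely generated torsion-free $M$. The ``bookkeeping'' you flag---that $(\mf q^n)^{[p^e]} = (\frq{\mf q})^n$ for a parameter ideal $\mf q$---is indeed the only subtle point, and it follows immediately by comparing monomial generators.
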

\begin{proof}
The idea of the proof is that it reduces, via completion and Cohen's structure theorem, 
to the easiest case where $R = k[[x_1, \ldots, x_d]]$. In this case the computation is very explicit, because $F_*^e R \cong R^{1/p^e} = k^{1/p^e}[[x_1^{1/p^e}, \ldots, x_d^{1/p^e}]]$ as $R$-algebras by mapping $F_*^e r \mapsto r^{1/p^e}$. 
Let $B = k^{1/p^e}[[x_1, \ldots, x_d]]$. Since $k^{1/p^e}$ is free over $k$, $B$ is a free $R$-module of rank $[k^{1/p^e} : k]$.
Furthermore, $k^{1/p^e}[[x_1^{1/p^e}, \ldots, x_d^{1/p^e}]]$ 
is a free $B$-module of rank $p^{ed}$ by giving explicitly a basis 
$x_1^{a_1/p^e}\cdots x_d^{a_d/p^e}$ where $0 \leq a_i \leq p^e - 1$.
\end{proof}

\begin{corollary}\label{cor: Kunz primes}
Let $R$ be an F-finite ring and $\mf p \subsetneq \mf q$ be prime ideals.
Then for any $e > 1$  
\[
[k(\mf p)^{1/p^e} : k(\mf p)] =  p^{e\dim R_\mf q/\mf pR_\mf q} [k(\mf q)^{1/p^e} : k(\mf q)].
\] 
\end{corollary}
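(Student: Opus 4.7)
The plan is to apply Theorem~\ref{thm: Kunz rank} directly to the ring $S \coloneqq (R/\mf p)_{\mf q}$. Since F-finiteness is preserved under taking quotients and localizations, $S$ is an F-finite local domain; its residue field is $k(\mf q)$, its fraction field is $k(\mf p)$, and its Krull dimension is $\dim R_\mf q / \mf p R_\mf q$.

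Applying Theorem~\ref{thm: Kunz rank} to $S$ gives
\[
\text{generic rank of } F_*^e S \;=\; p^{e\dim R_\mf q/\mf p R_\mf q}\,[k(\mf q)^{1/p^e} : k(\mf q)].
\]
I will compute the same generic rank a second way by tensoring $F_*^e S$ with the fraction field $k(\mf p)$ of $S$. Since $F_*^e$ commutes with localization (inverting a multiplicative set $W \subseteq S$ gives the same module as inverting $W^{[p^e]}$ once both sets consist of nonzerodivisors), the localization at $(0)$ identifies $(F_*^e S)\otimes_S k(\mf p)$ with $F_*^e k(\mf p)$. Its $k(\mf p)$-dimension is $[k(\mf p):k(\mf p)^{p^e}] = [k(\mf p)^{1/p^e}:k(\mf p)]$, because $\{F_*^e b_i\}$ is a $k(\mf p)$-basis of $F_*^e k(\mf p)$ exactly when $\{b_i\}$ is a basis of $k(\mf p)$ over the subfield $k(\mf p)^{p^e}$.

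Equating these two expressions for the generic rank yields the claimed formula. The only step that requires some care is verifying how Frobenius pushforward interacts with localization so that $(F_*^e S)_{(0)} \cong F_*^e k(\mf p)$ as $k(\mf p)$-modules; once this identification is in place, the argument reduces to one direct invocation of Theorem~\ref{thm: Kunz rank}.
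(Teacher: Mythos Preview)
Your proof is correct and is exactly the approach the paper takes: the paper's one-line proof simply says ``Follows from the theorem by passing to $R_\mf q/\mf pR_\mf q$,'' and your $S = (R/\mf p)_{\mf q}$ is that ring. You have merely spelled out the details (identifying the residue field, fraction field, and dimension of $S$, and explaining why the generic rank of $F_*^e S$ equals $[k(\mf p)^{1/p^e}:k(\mf p)]$) that the paper leaves implicit.
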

\begin{proof}
Follows from the theorem by passing to $R_\mf q/\mf pR_\mf q$.
\end{proof}

\subsection{Hilbert--Kunz multiplicity via colengths.}
In order to employ ideal-theoretic techniques, it is convenient to restate the first definition of Hilbert--Kunz multiplicity. 

In the rest of the paper $\length_R (M)$ (or $\length (M)$ if there is no risk of ambiguity) will be used the length of an $R$-module $M$.

\begin{lemma}\label{lem: numgen}
Let $(R, \mf m)$ be an F-finite local ring with the residue field $k$. Then for any integer $e \geq 0$
the minimal number of generators of $F_*^e R$ is 
$[k^{1/p^e} : k]\length (R/\frpe{\mf m})$, 
where 
$\frpe{\mf m}$ is the ideal generated by $\{x^{p^e} \mid x \in \mf m\}$.
\end{lemma}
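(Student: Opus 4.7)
The plan is to reduce the count of minimal generators to a $k$-dimension via Nakayama's lemma and then to compute that dimension through a composition series.

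First, by Nakayama's lemma the minimal number of generators of the finitely generated module $F_*^e R$ equals $\dim_k (F_*^e R / \mathfrak{m} F_*^e R)$. The next step is to identify this quotient explicitly. The key observation is the formula $r \cdot F_*^e(s) = F_*^e(r^{p^e}s)$, which expresses the $R$-action on $F_*^e R$ through Frobenius. Applying this to every generator of $\mathfrak{m}$ shows that
\[
\mathfrak{m} \cdot F_*^e R = F_*^e \bigl(\mathfrak{m}^{[p^e]}\bigr),
\]
and consequently $F_*^e R / \mathfrak{m} F_*^e R \cong F_*^e(R/\mathfrak{m}^{[p^e]})$ as abelian groups. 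Moreover, on the quotient the $R$-action factors through $R/\mathfrak{m} = k$, so we just need to compute $\dim_k F_*^e(R/\mathfrak{m}^{[p^e]})$.

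To compute this dimension, I would pick a composition series of $R/\mathfrak{m}^{[p^e]}$ as an $R$-module, which exists because $R/\mathfrak{m}^{[p^e]}$ has finite length; it has exactly $\ell(R/\mathfrak{m}^{[p^e]})$ factors, each isomorphic to $k$. Applying $F_*^e$ preserves short exact sequences (as $F_*^e$ is just a relabeling of the abelian group structure), so after passing to the residue-field action we obtain a filtration whose successive quotients are each $F_*^e k$. Since $F_*^e k$ is, as a $k$-vector space via the twisted action, precisely $k^{1/p^e}$, each quotient contributes $[k^{1/p^e}:k]$ to the $k$-dimension, yielding
\[
\dim_k F_*^e(R/\mathfrak{m}^{[p^e]}) = [k^{1/p^e}:k] \cdot \ell(R/\mathfrak{m}^{[p^e]}),
\]
which is what we wanted.

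The only subtle point — and the main place where care is required — is the bookkeeping about the two different $k$-module structures that appear: the $k$-action on $F_*^e R / \mathfrak{m} F_*^e R$ comes from the $R$-action through the quotient map $R \to k$, while the internal structure of each composition factor $F_*^e k$ carries a Frobenius-twisted $k$-structure. As long as one keeps these straight (using that $[k^{1/p^e}:k]$ is finite by F-finiteness, which makes the whole count finite), the argument above goes through cleanly.
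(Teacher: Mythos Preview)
Your proof is correct and follows essentially the same route as the paper's: Nakayama reduces to computing $F_*^e R/\mf m F_*^e R$, the identity $\mf m \cdot F_*^e R = F_*^e(\mf m^{[p^e]})$ identifies this quotient with $F_*^e(R/\mf m^{[p^e]})$, and a composition series together with exactness of $F_*^e$ yields the factor $[k^{1/p^e}:k]$. Your additional remark about keeping track of the two $k$-structures is a welcome point of care but does not change the argument.
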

\begin{proof}
Since $R$ is local the minimal number of generators of any module is 
$\length (M/\mf mM)$. Then $\mf m \cdot F_*^e R$ consists of elements of the form
$F_* \left (\sum_i r_i m_i^{p^e} \right )$, hence 
\[
\length (F_*^e R/\mf m F_*^e R) = \length (F_*^e R/\frpe{\mf m}).
\]
By the definition of length, for $L = \length (R/\frpe{\mf m})$ there is a filtration 
\[
\frpe{\mf m} = M_0 \subsetneq \cdots \subsetneq M_L = R
\]
with $M_{i + 1}/M_i \cong k$. 
Since $F_*^e \bullet$ is exact, it follows that 
$
\length (F_*^e R/\mf m F_*^e R) = [F_*^e k: k ] L.
$
Since $k$ is a field, $F_*^e k \cong k^{1/p^e}$ as $k$-algebras. 
\end{proof}

It follows from Theorem~\ref{thm: Kunz rank} and the definition that 
\[
\frac{\text{number of generators of } F_*^e R}{\text{generic rank of } F_*^e R}
=\frac{[k^{1/p^e}:k] \length (R/\frq{\mf m})}{p^{e\dim R} [k^{1/p^e} : k]}
= \frac{\length (R/\frq{\mf m})}{p^{e\dim R}}.
\]
This immediately extends the definition of Hilbert--Kunz multiplicity to all local rings of positive characteristic and all $\mf m$-primary ideals. 

\begin{definition}\label{def: general HK}
If $(R, \mf m)$ is a local ring of characteristic $p > 0$, 
$I$ an $\mf m$-primary ideal, and $M$ a finitely generated $R$-module, then 
the Hilbert--Kunz multiplicity of $I$ on $M$ is 
\[
\ehk (I; M) =  \lim_{e \to \infty} \frac{\length (M/\frpe{I}M)}{p^{e\dim R}}.
\]
\end{definition}

It is a custom to denote $\ehk (I) \coloneqq \ehk (I; R)$, moreover, in line with the previous definition of Hilbert--Kunz multiplicity and to highlight its meaning as a measure of singularity of $R$, it is common to denote $\ehk (R) \coloneqq \ehk (\mf m)$. Unfortunately, this may cause a certain ambiguity, but I will not use $I = R$ in 
Definition~\ref{def: general HK} as it is not $\mf m$-primary.  

It is now time to record several needed properties of Hilbert--Kunz multiplicity. 

\begin{proposition}[Some properties of Hilbert--Kunz multiplicity]\label{prop: HK properties}
Let $I$ be an $\mf m$-primary ideal. Then the following properties hold.
\begin{enumerate}
\item $\ehk (I; M) = 0$  if $\dim M < \dim R$.
\item $\ehk (I^{[p]}; M) = p^d \ehk (I; M)$.
\item (Additive) If $0 \to L \to M \to N \to 0$ is an exact sequence of finitely generated $R$-modules, then 
$\ehk (I; M) = \ehk (I; L) + \ehk (I; N)$.\label{item: exact sequence}
\item (Associativity) $\ehk (I; M) = \sum_\mf p \ehk (I; R/\mf p) \length (M_\mf p)$,
where the sum ranges over primes such that $\dim R/\mf p = \dim R$.
\label{item: associative}
\item (Lech-type inequality) $\ehk (I) \leq \length (R/I) \ehk (\mf m)$.\label{item: filtration}
\item Similarly, $\ehk (I) \leq \length (J/I) \ehk (\mf m) + \ehk (J)$ for any ideal $I \subseteq J \subset R$.\label{item: filtration 2}
\end{enumerate}
\end{proposition}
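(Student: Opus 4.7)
The plan is to establish the six properties in an order that respects their logical dependencies: first the direct properties (1) and (2), then the colength bounds (5) and (6), then additivity (3), and finally associativity (4) as a consequence.

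Properties (1) and (2) I would obtain directly from the definition. For (2), a simple reindexing uses $(I^{[p]})^{[p^e]} = I^{[p^{e+1}]}$ to convert the limit into $p^d$ times the original limit for $I$. For (1), I would take a prime filtration of $M$ and reduce to $M = R/\mf p$ with $s \coloneqq \dim R/\mf p < d$. In the local ring $R/\mf p$, the ideal $I(R/\mf p)$ is primary to the maximal ideal, and a Kunz-type estimate gives $\length((R/\mf p)/I^{[p^e]}(R/\mf p)) = O(p^{es})$; dividing by $p^{ed}$ and passing to the limit yields zero.

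For (5) and (6) I would use a common filtration argument exploiting the characteristic-$p$ identity $(x+y)^{p^e} = x^{p^e} + y^{p^e}$. Pick a composition series $I = J_0 \subsetneq \cdots \subsetneq J_L = J$ with $J_{i+1} = J_i + (a_{i+1})$ and $\mf m a_{i+1} \subseteq J_i$, so $L = \length(J/I)$. The key step is the inclusion $J_{i+1}^{[p^e]} \subseteq J_i^{[p^e]} + (a_{i+1}^{p^e})$ combined with the colon bound $(J_i^{[p^e]} : a_{i+1}^{p^e}) \supseteq \mf m^{[p^e]}$, which together give $\length(J_{i+1}^{[p^e]}/J_i^{[p^e]}) \leq \length(R/\mf m^{[p^e]})$. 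Summing over $i$ and combining with the short exact sequence $0 \to J^{[p^e]}/I^{[p^e]} \to R/I^{[p^e]} \to R/J^{[p^e]} \to 0$ gives (6); property (5) is then the special case $J = \mf m$.

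For (3), I would tensor $0 \to L \to M \to N \to 0$ with $R/I^{[p^e]}$ to obtain the four-term exact sequence
\[
0 \to K_e \to L/I^{[p^e]}L \to M/I^{[p^e]}M \to N/I^{[p^e]}N \to 0,
\]
where $K_e = (L \cap I^{[p^e]}M)/I^{[p^e]}L$. Additivity of length then reduces the claim to showing $\length(K_e)/p^{ed} \to 0$, and this is where I expect the main obstacle. When $\dim L < d$ the bound is immediate from (1) since $K_e$ is a quotient of $L/I^{[p^e]}L$; the serious case is $\dim L = d$, where one must control the image of $\Tor_1^R(N, R/I^{[p^e]})$, either via a growth bound for Tor over the Frobenius filtration or by an Artin--Rees-type comparison of the submodule filtrations $L \cap I^{n}M$ and $I^{n}L$ (after relating ordinary and Frobenius powers). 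Property (4) then follows from (3) together with (1): take a prime filtration of $M$, apply (3) inductively to obtain $\ehk(I;M) = \sum_i \ehk(I; R/\mf p_i)$, drop the terms with $\dim R/\mf p_i < d$ by (1), and count the remaining multiplicities by localizing the filtration at each $\mf p$ with $\dim R/\mf p = d$, which gives exactly $\length(M_\mf p)$.
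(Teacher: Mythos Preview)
Your treatment of (4), (5), and (6) is essentially the paper's proof. The paper takes a composition series of $J/I$ (with $J = R$ for (5) rather than your $J = \mf m$, but both variants work), derives $\length(\frpe{I_{k+1}}/\frpe{I_k}) \le \length(R/\frpe{\mf m})$ from $\mf m u_k \subseteq I_k$, and sums. Your derivation of (4) from (1) and (3) via a prime filtration, with the multiplicity $\length(M_\mf p)$ read off by localizing the filtration, matches the paper verbatim. The paper does not prove (1), (2), or (3) at all; it simply cites Watanabe--Yoshida.

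For (3) you have correctly located the genuine difficulty and, as you acknowledge, not resolved it. The Artin--Rees suggestion does not close the gap: Artin--Rees controls $L \cap I^{n}M$ for ordinary powers, and while $\frpe{I} \subseteq I^{p^e}$ gives $K_e \subseteq I^{p^e - c}L / \frpe{I}L$, the length of the latter is $\length(L/\frpe{I}L) - \length(L/I^{p^e-c}L)$, a difference of two quantities each of order $p^{ed}$ with \emph{different} leading coefficients ($\ehk(I;L)$ versus $\eh(I;L)/d!$), hence typically still of order $p^{ed}$. Your other suggestion, a growth bound on $\Tor_1^R(N, R/\frpe{I})$, is the right idea but is itself essentially Monsky's convergence estimate and requires the inductive machinery of Section~\ref{convergence} (compare Lemma~\ref{lem: isomorphic equivalence} and Corollary~\ref{cor: intermediate convergence}). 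A clean alternative is to reverse your logical order: prove (4) first by comparing $M$ with $\bigoplus_{\mf p \in \Minh(R)} (R/\mf p)^{\length(M_\mf p)}$ via Lemma~\ref{lem: isomorphic equivalence} together with (1), and then deduce (3) from (4), since the right-hand side of (4) is manifestly additive on short exact sequences.
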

\begin{proof}
Most of there properties were observed in \cite{WatanabeYoshida}. I want to comment on the last three. 

First, recall that $M$ has a prime filtration: 
$0 = M_0 \subsetneq M_1 \subsetneq \cdots \subsetneq M_L = M$ 
such that $M_{i+1}/M_i \cong R/\mf p_i$. Thus $\ehk (I; M) = \sum_{i} \ehk (I; R/\mf p_i)$ by (\ref{item: exact sequence}). However, by the first property, 
$\ehk (I; R/\mf p_i) = 0$ unless $\dim R/\mf p_i = \dim R$. 
Since any prime $\mf p$ such that $\dim R/\mf p = d$ is minimal, in the filtration $(M_i)_\mf p$ only remain the quotients such that $R/\mf p_i = R/\mf p$. Thus the localized filtration is a composition series of $M_\mf p$, its length must be $\length (M_\mf p)$.  

The proof of the last two properties is similar. Take a composition series of $J/I$, 
where $J = R$ in (\ref{item: filtration}): 
a sequence $I = I_0 \subsetneq I_1 \subsetneq \cdots \subsetneq I_\ell = J$
where $I_{k + 1}/I_k \cong R/\mf m$, i.e., $I_{k + 1} = (I_k, u_{k})$
where $\mf m u_{k} \subseteq I_k$. 
It follows that $\frpe{\mf m} u_{k}^{p^e} \subseteq \frpe{I_{k}}$, so 
\[
\length (R/\frpe{I}) = \sum_{k = 0}^{\ell - 1} \length (\frpe{I_{k+1}}/\frpe{I_{k}}) + \length (R/\frpe{J}) \leq \sum_{k = 0}^{\ell - 1} \length (R/\frpe{\mf m}) + \length (R/\frpe{J}). 
\]
As $\length (R/\frpe{R})=0$, both (\ref{item: filtration}) and (\ref{item: filtration 2}) follow after dividing by $p^{e\dim R}$ and taking limits. 

\end{proof}

\section{Equimultiplicity for F-signature}\label{sec: F-signature}
I will now present the results on F-signature that were obtained in \cite{PolstraSmirnov}. As a first step, we should verify that the invariant satisfies the localization property. In order to shorten Definition~\ref{def: Fsig}, 
denote $a_e (R) = \max \{N \mid F_*^e R = \oplus^N R \oplus M\}$.

\begin{lemma}
If $(R, \mf m)$ is an F-finite local domain then F-signature satisfies the localization property: if $\mf p$ is a prime ideal then $\fsig(R_\mf p) \geq \fsig(R)$. 
\end{lemma}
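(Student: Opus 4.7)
The plan is to compare the numerator $a_e$ and the generic rank appearing in Definition~\ref{def: Fsig} for $R$ and $R_\mf p$ separately, and to verify that the numerator is monotone non-decreasing under localization while the denominator is invariant.

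First I would establish the monotonicity of $a_e$. Given a splitting $F_*^e R \cong R^{\oplus a_e(R)} \oplus M$, I would simply localize at $\mf p$. Using that Frobenius pushforward commutes with localization, and that F-finiteness is preserved under localization (since the localization of a finitely generated module is finitely generated), this produces the splitting $F_*^e R_\mf p \cong R_\mf p^{\oplus a_e(R)} \oplus M_\mf p$, yielding $a_e(R_\mf p) \geq a_e(R)$ for every $e$.

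Next I would match the two denominators. Because $R$ is a domain, $R_\mf p$ has the same fraction field $K$ as $R$. Since the generic rank of an $R$-module depends only on its base change to $K$, the generic rank of $F_*^e R$ equals $[K^{1/p^e} : K]$, and the identical computation applies to $F_*^e R_\mf p$; thus the two generic ranks coincide. Equivalently, one can invoke Theorem~\ref{thm: Kunz rank} combined with Corollary~\ref{cor: Kunz primes}, using that an F-finite domain is catenary (in fact excellent) so that $\hght \mf p + \dim R/\mf p = \dim R$, and the factors $p^{ed}[k^{1/p^e}:k]$ and $p^{e\hght \mf p}[k(\mf p)^{1/p^e}:k(\mf p)]$ match on the nose.

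Combining the two steps, for every $e \geq 1$ the ratio
\[
\frac{a_e(R_\mf p)}{\text{generic rank of } F_*^e R_\mf p} \;\geq\; \frac{a_e(R)}{\text{generic rank of } F_*^e R}
\]
holds, and passing to the limit (granting existence of the limit on both sides, as discussed in Section~\ref{convergence}) yields $\fsig(R_\mf p) \geq \fsig(R)$. I do not anticipate a serious obstacle: the only subtle point is verifying that the denominators agree, which is precisely where the domain hypothesis enters; beyond that, the proof is formal.
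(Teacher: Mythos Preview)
Your proposal is correct and follows essentially the same route as the paper: localize a maximal free splitting of $F_*^e R$ to bound $a_e(R_\mf p) \geq a_e(R)$, observe that the generic ranks agree because $R$ and $R_\mf p$ share the same fraction field, and pass to the limit. The paper is terser and leaves the equality of generic ranks implicit, but there is no substantive difference.
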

\begin{proof}
If $M$ is an arbitrary finitely generated $R$-module then 
the free rank of $M_\mf p$ may only increase: 
if $M = \oplus^n R \oplus M'$ then $M'_\mf p$ may gain free summands.  
Hence, after confirming that $(F_*^e R)_\mf p \cong F_*^e R_\mf p$ 
one immediately sees that
\[
\fsig (R) = \lim_{e \to \infty} \frac{a_e(R) }{\text{generic rank of } F_*^e R}
\leq  \lim_{e \to \infty} \frac{a_e(R_\mf p)}{\text{generic rank of } F_*^e R} = 
\fsig(R_\mf p).
\]
\end{proof}

The key insight of the section is the following lemma that produces an adequate number of direct summands. 

\begin{lemma}\label{lemma: summands}
Suppose  $(R,\mathfrak{m})$ is an $F$-finite domain and $\fsig(R) > 0$. Suppose  $M$ is a finitely generated $R$-module and $e_0$ is a positive integer such that there is a decomposition
\[
F_*^{e_0} R \cong M \oplus N_{0}.
\]
If $M$ has no free direct summand, then for any $e \geq e_0$ there is a decomposition
$
F_*^{e} R \cong R^{\oplus a_e(R)}\oplus M^{\oplus b_e} \oplus N_{e}
$
such that 
\[
\lim_{e\to \infty}\frac{b_e}{\rank(F^e_*R)} = \frac{\fsig(R)}{\rank(F^{e_0}_*R)} > 0.
\]
\end{lemma}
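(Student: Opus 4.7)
The plan is to iterate the Frobenius via $F_*^e R = F_*^{e_0}(F_*^{e-e_0} R)$. Start from the canonical decomposition $F_*^{e-e_0} R \cong R^{\oplus a_{e-e_0}(R)} \oplus P_e$ in which $P_e$ is chosen to have no free direct summand; applying $F_*^{e_0}$ and substituting $F_*^{e_0} R \cong M \oplus N_0$ gives
\[
F_*^e R \cong M^{\oplus a_{e-e_0}(R)} \oplus \bigl(N_0^{\oplus a_{e-e_0}(R)} \oplus F_*^{e_0} P_e\bigr).
\]
Setting $b_e := a_{e-e_0}(R)$ and peeling the maximal free summand off the parenthesized module yields $F_*^e R \cong R^{\oplus c_e} \oplus M^{\oplus b_e} \oplus N_e$ for some $c_e \geq 0$ and some $N_e$ without free direct summand.

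To identify $c_e$ with $a_e(R)$, I would use that over a local ring the function $a(\cdot)$ counting the maximum number of free direct summands is additive on direct sums of finitely generated modules. This follows from the observation that a surjection $A \oplus B \to R$ must restrict to a surjection on at least one of $A$, $B$, since the image of $A\oplus B$ equals $R$ only if one of $\phi(A)$, $\phi(B)$ fails to lie in $\mf m$. Because $M$ (hence each $M^{\oplus k}$) has no free summand by hypothesis, and $N_e$ has no free summand by construction, additivity forces
\[
a_e(R) \;=\; a(F_*^e R) \;=\; c_e + b_e\cdot a(M) + a(N_e) \;=\; c_e,
\]
so the desired decomposition $F_*^e R \cong R^{\oplus a_e(R)} \oplus M^{\oplus b_e} \oplus N_e$ holds with $b_e = a_{e-e_0}(R)$.

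Finally, multiplicativity of the generic rank under $F_*^{e_0}$ (immediate from Theorem~\ref{thm: Kunz rank} applied in both $F_*^e R$ and $F_*^{e-e_0}R$, or by tensoring with the fraction field) gives $\rank(F_*^e R) = \rank(F_*^{e_0} R)\cdot\rank(F_*^{e-e_0} R)$. Since $e - e_0 \to \infty$ as $e \to \infty$, the definition of F-signature applied to $R$ yields
\[
\frac{b_e}{\rank(F_*^e R)} \;=\; \frac{1}{\rank(F_*^{e_0} R)}\cdot\frac{a_{e-e_0}(R)}{\rank(F_*^{e-e_0} R)} \;\longrightarrow\; \frac{\fsig(R)}{\rank(F_*^{e_0} R)},
\]
with positivity guaranteed by $\fsig(R) > 0$. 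The main technical hurdle is not the asymptotic calculation but the identification $c_e = a_e(R)$, which depends crucially on the clean behavior of the free-summand counting function over a local ring — without additivity one cannot relate the $c_e$ produced by the ad hoc iterated construction to the intrinsic invariant $a_e(R)$.
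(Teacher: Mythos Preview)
Your proof is correct and follows the same overall structure as the paper: decompose $F_*^{e-e_0}R$ into its free part plus a remainder, apply $F_*^{e_0}$, substitute $F_*^{e_0}R \cong M \oplus N_0$, set $b_e = a_{e-e_0}(R)$, and compute the limit using multiplicativity of generic rank. The only difference lies in how you justify $c_e = a_e(R)$. The paper invokes Krull--Schmidt in the completion to argue that the maximal free rank is independent of the chosen decomposition; you instead give a direct elementary argument that the free-summand count $a(\cdot)$ is additive on direct sums over a local ring, using that any surjection $A \oplus B \twoheadrightarrow R$ must restrict to a surjection on one factor. Your route avoids passing to the completion and the Krull--Schmidt theorem, which is a mild simplification; the paper's route is shorter to state once one is willing to cite that machinery.
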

\begin{proof}
For simplicity, I will write $a_e$ instead of $a_e(R)$. 
For any $e\in \mathbb{N}$ write $F^e_*R\cong R^{\oplus a_e}\oplus M_e$. Then 
\[
F^{e+e_0}_*R\cong F^{e_0}_*R^{\oplus a_e}\oplus F^{e_0}_*M_e
\cong \oplus^{a_e} \left (M \oplus N_0 \right) \oplus F^{e_0}_*M_e
\cong \oplus^{a_e}  M \oplus \left(  \oplus^{a_e} N_{0} \oplus F^{e_0}_*M_e\right).
\]
Therefore via this decomposition 
 \[
 \lim_{e\to \infty}\frac{b_e}{\rank(F^e_*R)} = \lim_{e\to \infty}\frac{a_{e-e_0}}{\rank(F^e_*R)}=\frac{\fsig(R)}{\rank(F^{e_0}_*R)}.
 \]
 
It remains to observe that the maximal rank of a free summand of any finitely generated $R$-module is an invariant independent of the choice of a direct sum decomposition.  This is because $R$ is a direct summand of $N$ if and only if $\hat{R}$ is a direct summand of $\hat{N}$ and a complete local ring satisfies the Krull--Schmidt condition. Therefore, as $M$ has no free summand, 
it is possible to decompose 
$\oplus^{a_e} N_{0} \oplus F^{e_0}_*M_e \cong R^{\oplus a_{e + e_0}} \oplus N_{e + e_0}$.
\end{proof}

The proof of the localization property leads quite naturally
to the following observation on $\fsig$-equimultiplicity.
While a truly intrinsic characterization of $\fsig$-equimultiplicity is not known, the result still suffices for showing that F-signature detects singularity. 

\begin{theorem}[Rigidity property]\label{theorem: F-signature is rigid}
Let $(R,\mathfrak{m})$ be an $F$-finite local domain  
and $\mf p\in \Spec R$ be a prime ideal. 
Suppose that $\fsig(R) > 0$. Then $\fsig(R)=\fsig(R_\mf p)$ if and only if $a_e(R)=a_e(R_\mf p)$ for every $e\in \mathbb{N}$.
\end{theorem}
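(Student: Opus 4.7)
The plan is to combine the previous localization argument for $a_e$ with Lemma~\ref{lemma: summands}, after first observing that $F^e_* R$ and $F^e_* R_\mf p$ have the \emph{same} generic rank. The latter follows by applying Theorem~\ref{thm: Kunz rank} to both $R$ and $R_\mf p$ and invoking Corollary~\ref{cor: Kunz primes} with $\mf q = \mf m$:
\[
\rank(F^e_* R_\mf p) = p^{e \dim R_\mf p}[k(\mf p)^{1/p^e}:k(\mf p)] = p^{e(\dim R_\mf p + \dim R/\mf p)}[k^{1/p^e}:k];
\]
an F-finite ring is excellent, hence catenary, so $\dim R_\mf p + \dim R/\mf p = \dim R$, and this coincides with $\rank(F^e_* R)$.

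Granted this equality, the ``if'' direction is immediate: if $a_e(R) = a_e(R_\mf p)$ for every $e$, then dividing by the common generic rank and passing to the limit yields $\fsig(R) = \fsig(R_\mf p)$. For the ``only if'' direction I would argue by contradiction. Suppose $\fsig(R) = \fsig(R_\mf p)$ but $a_{e_0}(R) < a_{e_0}(R_\mf p)$ for some $e_0$; the inequality $a_e(R) \leq a_e(R_\mf p)$ in general is implicit in the proof of the preceding lemma. Decompose
\[
F^{e_0}_* R \cong R^{\oplus a_{e_0}(R)} \oplus M,
\]
where $M$ has no free $R$-summand (this is the maximality of $a_{e_0}(R)$, using that the free rank is well defined via Krull--Schmidt on the completion, exactly as in Lemma~\ref{lemma: summands}). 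Localizing at $\mf p$, the strict inequality forces $M_\mf p$ to contain at least one $R_\mf p$-summand; let $c \geq 1$ be its free rank over $R_\mf p$.

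Now apply Lemma~\ref{lemma: summands} to this $M$: for every $e \geq e_0$ there is a decomposition
\[
F^e_* R \cong R^{\oplus a_e(R)} \oplus M^{\oplus b_e} \oplus N_e, \qquad \lim_{e\to \infty}\frac{b_e}{\rank(F^e_* R)} = \frac{\fsig(R)}{\rank(F^{e_0}_* R)} > 0,
\]
where positivity uses the hypothesis $\fsig(R) > 0$. Localizing at $\mf p$ each $M_\mf p^{\oplus b_e}$ contributes at least $c b_e$ further copies of $R_\mf p$, so $a_e(R_\mf p) \geq a_e(R) + c b_e$. Dividing by the common generic rank and letting $e \to \infty$ gives
\[
\fsig(R_\mf p) \geq \fsig(R) + c \cdot \frac{\fsig(R)}{\rank(F^{e_0}_* R)} > \fsig(R),
\]
contradicting the assumption. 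The main obstacle is this amplification step: one needs Lemma~\ref{lemma: summands} to convert a single failure at $e = e_0$ into an asymptotic gap between the two F-signatures, and this relies crucially on $\fsig(R) > 0$ (to guarantee $b_e$ grows at a positive rate) and on the generic rank coincidence so that both F-signatures are normalized by the same denominator.
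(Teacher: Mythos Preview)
Your proof is correct and follows essentially the same route as the paper: contrapose, isolate an $e_0$ where $M_{e_0}$ gains a free summand after localization, amplify via Lemma~\ref{lemma: summands}, and pass to the limit. The only cosmetic differences are that you track the free rank $c\geq 1$ of $M_\mf p$ (the paper is content with $c=1$) and that your detour through Theorem~\ref{thm: Kunz rank} and catenarity to match the generic ranks is unnecessary---since $R$ is a domain, $R_\mf p$ has the same fraction field, so the generic ranks of $F^e_*R$ and $(F^e_*R)_\mf p=F^e_*R_\mf p$ agree trivially.
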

\begin{proof}
If $a_e(R)=a_e(R_\mf p)$ for every $e\in \NN$ then clearly $\fsig(R)=\fsig(R_\mf p)$. 
Otherwise, suppose that $a_{e_0}(R_\mf p)>a_{e_0}(R)$ for some $e_0$ and write $F^{e_0}_*R\cong R^{\oplus a_{e_0}}\oplus M_{e_0}$. Then $(M_{e_0})_\mf p$ has a free $R_\mf p$-summand. 
By Lemma~\ref{lemma: summands} 
for each $e\in \mathbb{N}$ we may decompose
 \[
 F^e_*R\cong R^{\oplus a_e(R)}\oplus M_{e_0}^{\oplus b_e}\oplus N_e
 \]
 where $b_e$ grows as fast as $\rank (F_*^e R)$. 
 Localizing at the prime $P$ we see that $a_e(R_\mf p)\geq a_e(R)+b_e$ and
 \[
 \fsig(R_\mf p) = \lim_{e\to \infty}\frac{a_e(R_\mf p)}{\rank(F^e_*R)}
 \geq \lim_{e\to \infty}\frac{a_e(R) + b_e}{\rank(F^e_*R)}
 = \fsig(R) \frac{1 + \rank(F^{e_0}_*R)}{\rank(F^{e_0}_*R)} > \fsig(R).
 \]
\end{proof}

An application useful for experiments is that if there is a single $e$ such that 
$a_e(R) \neq a_e(R_\mf p)$ then $\fsig(R) \neq \fsig(R_\mf p)$. 

\begin{remark}
In the case where $\fsig(R) = 0$ the F-signature functions can be different.  
As an example, take $R = \mathbb{F}_p [[x,y, z]]/(x^2 - y^2z)$ with $p > 2$.
It was computed in \cite[4.3.2]{BlickleSchwedeTucker}
that $a_e(R) \approx p^e/2$. On the other hand, 
the prime ideal $\mf p = (x, y)$ is the F-splitting prime of $R$ in the sense 
of \cite{AberbachEnescu}.
Then by \cite[Proposition~3.6, Corollary~3.4]{AberbachEnescu} 
$a_e(R_\mf p) = p^e$. 
\end{remark}

\subsection{F-signature and non-singularity}
In contrast with the multiplicity-like invariants, F-signature has two distinguished values, $0$ and $1$. 
It was shown by Aberbach--Leuschke in \cite{AberbachLeuschke} (see also \cite{PolstraTucker} for alternative proofs) that F-signature is positive if and only if $R$ is \emph{strongly F-regular}. It will be used in the proof that 
a strongly F-regular local ring is a domain, \cite[Lemma~3.2]{MaPolstraBook}.

\begin{theorem}[{\cite[Corollary~16]{HunekeLeuschke}}]\label{theorem signature 1 iff regular} Let $(R,\mf m)$ be an $F$-finite local ring of prime characteristic $p$. Then $\fsig(R)=1$ if and only if $R$ is a regular local ring.
\end{theorem}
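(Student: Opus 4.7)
The easy direction is that if $R$ is regular then $F_*^e R$ is flat by Theorem~\ref{theorem Kunz}, hence (as $R$ is local and $F_*^e R$ finitely generated under the F-finite hypothesis) free. Thus $F_*^e R \cong R^{\oplus \rank(F_*^e R)}$, so $a_e(R) = \rank(F_*^e R)$ for every $e$, which immediately gives $\fsig(R) = 1$.

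For the converse, suppose $\fsig(R) = 1$. The plan is to combine the localization property, the rigidity theorem (Theorem~\ref{theorem: F-signature is rigid}), and Kunz's characterization of regular rings. First, since $\fsig(R) = 1 > 0$, the Aberbach--Leuschke theorem gives that $R$ is strongly F-regular, and in particular $R$ is a domain; so the hypotheses of Theorem~\ref{theorem: F-signature is rigid} apply. Second, for every prime $\mf p \in \Spec R$ one has the chain of inequalities
\[
1 = \fsig(R) \leq \fsig(R_\mf p) \leq 1,
\]
the left inequality being the localization property and the right one following from $a_e \leq \rank(F_*^e R)$. Hence $\fsig(R_\mf p) = \fsig(R)$ for every prime $\mf p$.

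Now I would apply Theorem~\ref{theorem: F-signature is rigid} to conclude $a_e(R) = a_e(R_\mf p)$ for every $e$ and every $\mf p$. The key case is $\mf p = (0)$: here $R_{(0)} = L$ is the fraction field of $R$, so $F_*^e L \cong L^{1/p^e}$ is an $L$-vector space of dimension equal to the generic rank of $F_*^e R$. In particular
\[
a_e(R_{(0)}) = \rank(F_*^e R).
\]
Combined with $a_e(R) = a_e(R_{(0)})$, this forces $F_*^e R$ to be a free $R$-module of rank $\rank(F_*^e R)$, for every $e \geq 1$. By Theorem~\ref{theorem Kunz}, $R$ is regular.

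The main conceptual point—and the step I would expect a reader to want to see carefully—is the use of rigidity at the generic point, which converts the asymptotic equality $\fsig(R) = \fsig(R_\mf p)$ into a \emph{pointwise} equality $a_e(R) = a_e(R_\mf p)$ that one can feed back into Kunz's theorem. Everything else is packaging: the known fact $0 < \fsig(R) \Rightarrow R$ is a strongly F-regular domain, the trivial upper bound $\fsig \leq 1$, and the identification $a_e$ of the fraction field with the generic rank.
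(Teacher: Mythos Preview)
Your proof is correct and follows essentially the same approach as the paper: use positivity of $\fsig$ to get that $R$ is a domain, apply the localization inequality at the generic point to obtain $\fsig(R_{(0)})=\fsig(R)$, invoke the rigidity theorem (Theorem~\ref{theorem: F-signature is rigid}) to conclude $a_e(R)=a_e(R_{(0)})=\rank(F_*^eR)$, and finish with Kunz. Your write-up is slightly more expansive (you include the easy direction and the explicit upper bound $\fsig\leq 1$), but the argument is the paper's.
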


\begin{proof}
Since $\fsig(R) > 0$, $R$ is a domain, so $R_{(0)}$ is a field and 
$a_e(R_{(0)})=\rank(F^e_*R)$. By the localization inequality, 
$\fsig(R_{(0)}) = \fsig(R)$, so 
$a_e(R)=a_e(R_{(0)})=\rank(F^e_*R)$ by Theorem~\ref{theorem: F-signature is rigid}. Therefore $F^e_*R$ is a free $R$-module and $R$ is a regular local ring by Theorem~\ref{theorem Kunz}.
\end{proof}

\section{Uniform convergence of F-invariants}\label{convergence}
The theory of $\ehk$-equimultiplicity requires a result interchanging the order of  limits. 
Following \cite{SmirnovEqui} the formula will be derived from a uniform convergence estimate. This type of convergence estimates first 
appeared in \cite{Tucker}. The proof presented here borrows from Huneke's survey \cite{HunekeSurvey}, these arguments are slightly different from 
\cite{Tucker, SmirnovEqui}.

\subsection{Convergence estimates}

One way to think through the proof of existence of Hilbert--Kunz multiplicity is through an equivalence relation on modules, where $M \sim N$ if there exists a module $T$ such that $\dim T < \dim R$ and $|\length (M/IN) - \length (N/IN)| < \length (T/IT)$ for all $\mf m$-primary ideals $I$. 
The uniform convergence argument imposes a further control on the difference module $T$. 

\begin{lemma}\label{lem: isomorphic equivalence}
Let $(R, \mf m)$ be a local ring and $M, N$ be finitely generated $R$-modules. 
Suppose that  $M_\mf p \cong N_\mf p$ for every prime $\mf p$ with $\dim R/\mf p = \dim R$.
Then there exist exact sequences 
\[
M \to N \to T_1 \to 0 \text{ and } N \to M \to T_2 \to 0
\]
such that $\dim T_1, \dim T_2 < \dim R$. 
In particular, if $T = T_1 \oplus T_2$, then for any $\mf m$-primary ideal $I$
\[
|\length (M/IN) - \length (N/IN)| < \length (T/IT).
\]
\end{lemma}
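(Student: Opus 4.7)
The plan is to reduce the lemma to constructing a single $R$-linear map $\psi_1 \colon M \to N$ whose localization at every prime of maximum dimension is an isomorphism, together with a symmetric $\psi_2 \colon N \to M$; then $T_i \coloneqq \coker \psi_i$ will automatically give the required right exact sequences, and the length estimate will fall out from subadditivity of length.

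First I would enumerate the primes $\mf p_1, \ldots, \mf p_s$ with $\dim R/\mf p_i = \dim R$ and observe that these are necessarily minimal primes of $R$ (any strict specialization would yield a quotient of dimension $>\dim R$), hence pairwise incomparable. To patch the given isomorphisms $M_{\mf p_i} \cong N_{\mf p_i}$ into a single global map I would pass to the semi-localization at $W = R \setminus \bigcup_i \mf p_i$: because each $\mf p_i$ is minimal, $W^{-1}R$ is a semi-local Artinian ring with maximal ideals $\mf p_i W^{-1}R$, and any finitely generated module over it splits as a direct sum of its further localizations, so $W^{-1}M \cong \bigoplus_i M_{\mf p_i}$ and similarly for $N$. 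The hypothesis then assembles into a genuine isomorphism $\Phi \colon W^{-1}M \xrightarrow{\sim} W^{-1}N$. Since $M$ is finitely presented, $\Hom_R(M,N)$ commutes with localization, so I may write $\Phi = \psi_1/s$ with $\psi_1 \in \Hom_R(M,N)$ and $s \in W$; as $s$ is a unit in every $R_{\mf p_i}$, each $(\psi_1)_{\mf p_i}$ is an isomorphism. Applying the same construction to $\Phi^{-1}$ produces $\psi_2$.

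With $T_i = \coker \psi_i$ in hand, $(T_i)_{\mf p_j} = 0$ for all $i,j$; combined with the observation that every prime of maximum dimension appears among the $\mf p_j$, this forces $\dim T_i < \dim R$. Finally, tensoring the sequences $M \to N \to T_1 \to 0$ and $N \to M \to T_2 \to 0$ with $R/I$ and invoking subadditivity on the resulting right exact sequences gives $\length(N/IN) \leq \length(M/IM) + \length(T_1/IT_1)$ together with its symmetric partner, which combine into the desired bound on $|\length(M/IM) - \length(N/IN)|$ by $\length(T/IT)$ with $T = T_1 \oplus T_2$. The step requiring the most care is the globalization of the local-on-top-dimensional-primes isomorphisms into a single $R$-linear map; the key idea is to avoid awkward prime-avoidance patching by semi-localizing at the entire finite set $\{\mf p_i\}$ at once, which works cleanly precisely because each $\mf p_i$ is minimal and hence $W^{-1}R$ is Artinian.
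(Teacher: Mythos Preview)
Your proof is correct and follows essentially the same approach as the paper: both localize at the complement of the finitely many primes of maximal dimension, lift the resulting isomorphism using that $\Hom$ commutes with localization for finitely presented modules, and take cokernels. Your version is in fact more explicit than the paper's, which simply asserts $S^{-1}M \cong S^{-1}N$ without spelling out the Artinian decomposition, and which omits the final tensoring step for the length inequality; you have filled in exactly the details the paper leaves implicit.
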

\begin{proof}
There are only finitely many primes $\mf p_i$ with $\dim R/\mf p_i = \dim R$. 
Set $S = R \setminus \cup \mf p_i$. By the assumption 
$S^{-1} M \cong S^{-1} N$.
Since $\Hom$ commutes with localization for finitely presented modules, 
there are maps $\psi\colon M \to N$ and $\phi \colon N \to M$ 
which become isomorphisms after localizing in $S$.
Hence $\dim \coker \phi, \coker \psi < \dim R$ and  
the assertion follows.
\end{proof}

\begin{corollary}\label{cor: Frob rank equivalence}
Let $(R, \mf m, k)$ be an F-finite reduced local ring 
and $M$ be a finitely generated $R$-module. 
Then for $p^\gamma = p^{\dim R} [k^{1/p} : k]$
we have exact sequences
\[
F_* M \to \oplus^{p^\gamma} M \to T_1 \to 0 \text{ and }
\]
\[
\oplus^{p^\gamma} M \to F_* M \to  T_2 \to 0,
\]
where $\dim T_1, \dim T_2 < \dim R$. 
\end{corollary}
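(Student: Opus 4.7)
The plan is to apply Lemma~\ref{lem: isomorphic equivalence} directly to the pair $(F_* M, \oplus^{p^\gamma} M)$. To do so, I need to verify the generic-isomorphism hypothesis: $(F_*M)_\mf p \cong (\oplus^{p^\gamma} M)_\mf p$ for every prime $\mf p$ with $\dim R/\mf p = \dim R$.

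First I would observe that any prime with $\dim R/\mf p = \dim R$ must be a minimal prime of $R$, and since $R$ is reduced, $R_\mf p = k(\mf p)$ is a field. Since the Frobenius preimage of any prime equals that prime, localization commutes with pushforward along Frobenius, so $(F_*M)_\mf p \cong F_*(M_\mf p)$ as an $R_\mf p$-module; here the latter $F_*$ refers to Frobenius on the field $R_\mf p$. If $r = \dim_{k(\mf p)} M_\mf p$, then as a $k(\mf p)$-vector space $F_*(M_\mf p) \cong (k(\mf p)^{1/p})^{\oplus r}$.

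Next I would compute the degree $[k(\mf p)^{1/p}:k(\mf p)]$ using Corollary~\ref{cor: Kunz primes} applied to the inclusion $\mf p \subsetneq \mf m$ with $e=1$: it yields
\[
[k(\mf p)^{1/p}:k(\mf p)] = p^{\dim R_\mf m/\mf p R_\mf m}[k^{1/p}:k] = p^{\dim R}[k^{1/p}:k] = p^\gamma,
\]
where $\dim R/\mf p = \dim R$ is used to identify $\dim R_\mf m/\mf p R_\mf m$ with $\dim R$. Hence $(F_* M)_\mf p$ has $k(\mf p)$-dimension $r p^\gamma$, and the same holds for $(\oplus^{p^\gamma} M)_\mf p$. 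Two finite-dimensional vector spaces of equal dimension over a field are isomorphic, so the hypothesis of Lemma~\ref{lem: isomorphic equivalence} is satisfied.

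Applying the lemma then yields the two desired exact sequences with $\dim T_i < \dim R$. The main subtlety is purely bookkeeping: making sure localization at a minimal prime $\mf p$ intertwines with $F_*$ (which uses that $F$ is the identity on $\Spec$ and that $R$ is reduced so $R_\mf p$ is a field), and invoking Corollary~\ref{cor: Kunz primes} with the correct specialization. No substantial obstacle beyond this verification is expected.
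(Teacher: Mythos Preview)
Your proof is correct and follows essentially the same approach as the paper: verify the hypothesis of Lemma~\ref{lem: isomorphic equivalence} by computing the $k(\mf p)$-dimension of $(F_*M)_\mf p$ at each minimal prime $\mf p$ with $\dim R/\mf p = \dim R$, using Kunz's rank computation. The only cosmetic difference is that the paper invokes Theorem~\ref{thm: Kunz rank} directly (effectively applied to the domain $R/\mf p$) rather than its consequence Corollary~\ref{cor: Kunz primes}.
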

\begin{proof}
Let $\mf p \in \Minh (R)$, i.e., $\dim R/\mf p = \dim R$. Then $R_\mf p$ is a field, $k(\mf p)$, so 
$M_\mf p$ is a vector space over it. If $r$ is the rank of this vector space 
then we compute 
\[
\dim_{k(\mf p)} (F_* M)_\mf p = \dim_{k(\mf p)} F_* M_\mf p
= r \dim_{k(\mf p)} F_* k(\mf p) = r p^\gamma,
\]
by Theorem~\ref{thm: Kunz rank}. 
Therefore $\oplus^{p^\gamma} M_\mf p$ and $F_* M_\mf p$ are isomorphic 
for any $\mf p \in \Minh (R)$. 
\end{proof}

The following result is very much inspired by the treatment of Dutta's lemma (\cite{Dutta})
in \cite[Lemma~3.4]{HunekeSurvey}. 

\begin{lemma}\label{lem: measure defect}
Let $(R, \mf m, k)$ be an F-finite local ring with a residue field $k$
and let $M$ be a finitely generated $R$-module. 
Denote $p^{\dim M}[k : k^p] = p^{\gamma}$. 
Then there exists a finitely generated $R$-module $T$
such that $\Supp M = \Supp T$ and for all $\mf m$-primary ideals $I$ and 
$e \geq 0$ 
\[
\length (R/I \otimes_R F_*^e M) \leq 
p^{e\gamma} \length (T/IT).
\]
\end{lemma}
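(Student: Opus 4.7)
The plan is to reduce the stated inequality to something cleaner and then induct on $d = \dim M$. Using $I \cdot F_*^e M = F_*^e(I^{[p^e]}M)$ one has $R/I\otimes_R F_*^e M \cong F_*^e(M/I^{[p^e]}M)$, and a composition-series computation (just as in Lemma~\ref{lem: numgen}) gives $\length_R(F_*^e N) = [k^{1/p^e}:k]\length_R(N)$ for any finite-length $N$. Since also $p^{e\gamma} = p^{ed}[k^{1/p^e}:k]$, the target inequality is equivalent to producing a single finitely generated $T$ with $\Supp T = \Supp M$ such that
\[
\length\bigl(M/I^{[p^e]}M\bigr) \leq p^{ed}\length(T/IT)
\]
for all $\mf m$-primary $I$ and all $e \geq 0$. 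The base case $d = 0$ is immediate: taking $T = k^{\oplus \length M}$ gives $\mf m T = 0$ and hence $\length(T/IT) = \length(M) \geq \length(M/I^{[p^e]}M)$, with $\Supp T = \{\mf m\} = \Supp M$.

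For $d \geq 1$, I would first reduce to the case $M = R/\mf p$ with $\dim R/\mf p = d$: take a prime filtration of $M$ with factors $R/\mf p_i$, treat each factor individually (using the inductive hypothesis when $\dim R/\mf p_i < d$, and the argument below when $\dim R/\mf p_i = d$), and sum the resulting modules. Subadditivity of $\length(-/I^{[p^e]}-)$ on short exact sequences then delivers the bound for $M$ itself. After replacing $R$ by $R/\mf p$ I may assume that $R$ is an $F$-finite local domain of dimension $d$ and $M = R$. Corollary~\ref{cor: Frob rank equivalence} applied to $R$ then yields a right-exact sequence
\[
R^{\oplus p^{\gamma_0}} \longrightarrow F_* R \longrightarrow T_2 \longrightarrow 0,\qquad p^{\gamma_0} = p^d[k:k^p],\ \dim T_2 < d.
\]
Applying the exact functor $F_*^e$ and then tensoring with $R/I$ produces the recurrence
\[
a_{e+1} \leq p^{\gamma_0}a_e + b_e,\qquad a_e := \length(F_*^e R/IF_*^e R),\ b_e := \length(F_*^e T_2/IF_*^e T_2),
\]
since the image of $(F_*^e R)^{\oplus p^{\gamma_0}}$ in $F_*^{e+1}R$ is a quotient of the former while the cokernel of the map is $F_*^e T_2$.

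By the inductive hypothesis applied to $T_2$ (whose dimension is strictly less than $d$) one obtains $T'$ with $\Supp T' = \Supp T_2$ and $b_e \leq p^{e\gamma_2}\length(T'/IT')$ where $\gamma_2 := \dim T_2 + \log_p[k:k^p] \leq \gamma_0 - 1$. Solving the linear recurrence with initial value $a_0 = \length(R/I)$ gives $a_e \leq p^{e\gamma_0}\length(R/I) + p^{(e-1)\gamma_0}\bigl(\sum_{i=0}^{e-1}p^{i(\gamma_2-\gamma_0)}\bigr)\length(T'/IT')$; the geometric sum in parentheses has ratio $p^{\gamma_2 - \gamma_0} \leq 1/p$ and is therefore bounded by an absolute constant, so $a_e \leq p^{e\gamma_0}\bigl(\length(R/I) + C\length(T'/IT')\bigr)$ for some $C$. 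Setting $T = R\oplus (T')^{\oplus C}$ then finishes the argument, with $\Supp T = \Spec R = \Supp M$ because $R$ is a domain. I expect the main obstacle to be precisely this last uniform estimate: the error modules $F_*^e T_2$ appear at every step of the recurrence, and one must verify that their contributions accumulate into a bound of the same exponential order $p^{e\gamma_0}$ rather than a strictly larger one. The dimension drop $\dim T_2 < d$ makes this work, since it forces the error rate $p^{\gamma_2}$ to be strictly smaller than $p^{\gamma_0}$ and the geometric series to converge --- the dimension-counting underlying the uniform convergence phenomenon that is the theme of this section.
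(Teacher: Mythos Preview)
Your proposal is correct and follows essentially the same strategy as the paper: induct on $\dim M$, use Corollary~\ref{cor: Frob rank equivalence} to obtain a recurrence with a lower-dimensional error term, and bound the accumulated error by a convergent geometric series. The only notable difference is in the preliminary reduction: the paper replaces $M$ by $F_*^{e_0}M$ for large $e_0$ (so that $\Ann M$ becomes radical) and then passes to $R/\Ann M$, whereas you reduce via a prime filtration to $M = R/\mf p$ and work over the domain $R/\mf p$; both maneuvers serve the same purpose of landing in a reduced ring where Corollary~\ref{cor: Frob rank equivalence} applies, and the remainder of the argument is identical.
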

\begin{proof}
Note that for all $e \gg 0$ 
the annihilator of $F_*^e M$ is a radical ideal. 
We may replace $M$ with $F_*^{e_0} M$: if $T'$ is such that 
$\length (R/I \otimes_R F_*^{e} M) \leq 
p^{e \gamma} \length (T'/IT')$ 
for $e \geq e_0$, 
then the assertion holds for  $T = \oplus_{e = 0}^{e_0} F_*^e M \oplus T'$.
Hence by replacing $R$ by $R/\Ann M$, it suffices to assume that 
$R$ is reduced and $\Supp M = \Spec R$.

The proof proceeds by induction on $\dim M$. In the base case, $\dim M = 0$,
$M$ is a $k$-vector space. Let $m$ be its dimension. 
Then 
\[
\length (R/I \otimes_R F_*^{e} M)  = \length (F_*^{e} M) 
= m [k^{1/p^e} : k] =   m p^{e \gamma}
\]
and the  base case follows by taking $T = M$. 
 
In the general case, Corollary~\ref{cor: Frob rank equivalence}
provides a module $N$ such that $\dim N < \dim R$ and there is an exact sequence
$
\oplus^{p^\gamma} M \to F_*M \to N \to 0.
$
It follows that the inequality 
\[
\length (R/I \otimes_R F_*^{e + 1} M) \leq p^{\gamma} \length (R/I \otimes_R F_*^e M) + \length (R/I \otimes_R F_*^e N)
\]
holds for all $e \geq 0$. Adding up these inequalities gives the bound 
\begin{equation}\label{eq: defect}
\length (R/I \otimes F_*^e M) \leq 
p^{e\gamma} \length (M/IM) + 
\sum_{n = 0}^{e - 1} p^{(e-1-n)\gamma}\length (R/I \otimes F_*^n N).
\end{equation}
Since $\dim N \leq \dim R - 1$, by induction there is a module $T'$ such that
$\length (R/I \otimes F_*^n N) \leq p^{n(\gamma - 1)} \length (T'/IT')$, 
plugging this estimate in (\ref{eq: defect}) yields that 
\begin{align*}
\length (R/I \otimes F_*^e M) &\leq 
p^{e \gamma} \length (M/IM) + 
\sum_{n = 0}^{e - 1} p^{(e-1-n)\gamma}\length (R/I \otimes F_*^n N)
\\ &\leq 
p^{e\gamma} \length (M/IM) + 
\sum_{n = 0}^{e - 1} p^{e\gamma - n} \length (T'/IT')
\\ &\leq p^{e \gamma}\left(  \length (M/IM) + \length (T'/IT') \sum_{n= 0}^{e -1} p^{-n} \right) 
\\&\leq p^{e \gamma}\left(  \length (M/IM) + 2\length (T'/IT') \right).
\end{align*}
Since $\Spec R = \Supp M \subseteq \Supp (M \oplus T' \oplus T')$ 
the assertion follows for $T = M \oplus T' \oplus T'$. 
\end{proof}

\begin{corollary}\label{cor: intermediate convergence}
Let $(R, \mf m, k)$ be an F-finite local ring and $M$ be a finitely generated $R$-module. Denote $p^{\dim M}[k : k^p] = p^{\gamma}$. 
There exists a positive integer $e_0$ (which can be taken to be $0$ if $R$ is reduced) and a finitely generated $R$-module $D$ such that  $\dim D < \dim R$
and for any $\mf m$-primary ideal $I$ and all $e \geq 1$ 
\[
\left |\length (R/I \otimes_R F_*^{e + e_0} M)  - p^{e\gamma} \length (R/I \otimes_R F_*^{e_0} M)\right | 
\leq p^{e\gamma} \length (D/ID).
\]
\end{corollary}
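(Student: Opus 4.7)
\medskip

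\noindent\textbf{Proof plan for Corollary~\ref{cor: intermediate convergence}.} The plan is to first establish the estimate in the reduced case with $e_0 = 0$, then reduce the general case to the reduced one by passing to a radical quotient. In the reduced case, the engine is the pair of exact sequences provided by Corollary~\ref{cor: Frob rank equivalence} combined with a telescoping argument controlled by the defect bound of Lemma~\ref{lem: measure defect}.

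Assume first that $R$ is reduced, and apply Corollary~\ref{cor: Frob rank equivalence} to obtain exact sequences $F_* M \to \oplus^{p^\gamma} M \to T_1 \to 0$ and $\oplus^{p^\gamma} M \to F_* M \to T_2 \to 0$ with $\dim T_i < \dim M$. Since the functor $F_*^e$ is exact, applying it and then tensoring with $R/I$ (which is right exact) yields the fundamental two-sided estimate
\[
\left|\length(R/I \otimes_R F_*^{e+1} M) - p^\gamma \length(R/I \otimes_R F_*^e M)\right| \leq \length(R/I \otimes_R F_*^e T),
\]
where $T \coloneqq T_1 \oplus T_2$ satisfies $\dim T < \dim M$. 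Setting $\psi_e \coloneqq \length(R/I \otimes_R F_*^e M)/p^{e\gamma}$, dividing the above by $p^{(e+1)\gamma}$, and summing a telescoping series from $n = 0$ to $e-1$ gives
\[
\left|\psi_e - \psi_0\right| \leq \sum_{n=0}^{e-1} \frac{\length(R/I \otimes_R F_*^n T)}{p^{(n+1)\gamma}}.
\]
Now invoke Lemma~\ref{lem: measure defect} for $T$: there is a fixed module $T'$ (independent of $I$) such that $\length(R/I \otimes_R F_*^n T) \leq p^{n\gamma_T}\length(T'/IT')$, where $p^{\gamma_T} = p^{\dim T}[k:k^p]$. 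Since $\dim T < \dim M$ we have $\gamma_T \leq \gamma - 1$, so the sum is dominated by the geometric series $\sum_{n \geq 0} p^{-n-\gamma}\length(T'/IT') \leq 2p^{-\gamma}\length(T'/IT')$. Multiplying back by $p^{e\gamma}$ and taking $D$ to be a suitable finite direct sum of copies of $T'$ (enough to absorb the constant factor) yields the desired bound $p^{e\gamma}\length(D/ID)$, with $\dim D = \dim T' = \dim T < \dim R$.

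For general $R$, recall that $\Ann_R(F_*^e M) = \{r \in R : r^{p^e} \in \Ann_R M\}$, so for $e_0 \gg 0$ this annihilator stabilizes at $\sqrt{\Ann_R M}$ and $\overline{R} \coloneqq R/\Ann_R(F_*^{e_0} M)$ is reduced. Since $\overline{R}$ remains F-finite and local with residue field $k$, and $\dim_{\overline{R}} F_*^{e_0} M = \dim M$ (supports are preserved by Frobenius), the invariant $\gamma$ is unchanged. Applying the reduced case to $\overline{R}$ and the $\overline{R}$-module $N \coloneqq F_*^{e_0} M$ produces a module $D$ over $\overline{R}$, hence over $R$, with $\dim_R D < \dim \overline{R} \leq \dim R$; the resulting inequality is exactly the one asserted since $F_*^e N = F_*^{e+e_0} M$ and $R/I \otimes_R X = \overline{R}/I\overline{R} \otimes_{\overline{R}} X$ with matching lengths for $\overline{R}$-modules $X$.

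The main obstacle lies in step two: one must carefully manage the telescoping so that the bound is uniform in the $\mf m$-primary ideal $I$. This uniformity is exactly what Lemma~\ref{lem: measure defect} is designed to provide, since it produces a \emph{single} module $T'$ controlling all $\length(R/I \otimes_R F_*^n T)$ simultaneously, while the gap $\gamma_T \leq \gamma - 1$ supplies the geometric decay needed for the series to converge to a constant independent of $e$.
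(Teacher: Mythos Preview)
Your argument is essentially the paper's: obtain the one-step estimate from Corollary~\ref{cor: Frob rank equivalence}, telescope, and control the tail uniformly via Lemma~\ref{lem: measure defect}, reducing the non-reduced case by passing to a Frobenius shift. The only organizational difference is that you isolate the reduced case first and then reduce to it via $\overline{R}=R/\Ann_R(F_*^{e_0}M)$, whereas the paper fixes $e_0$ with $\sqrt{0}^{[p^{e_0}]}=0$ and works over $R_{\mathrm{red}}$ directly; these are equivalent.

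One small imprecision: Corollary~\ref{cor: Frob rank equivalence} produces sequences with $p^{\dim R}[k^{1/p}:k]$ copies and $\dim T_i<\dim R$, not $p^{\dim M}[k^{1/p}:k]$ copies with $\dim T_i<\dim M$ as you write. This is harmless in your reduction step, since over $\overline{R}$ you have $\dim N=\dim\overline{R}$, but your standalone ``reduced case with $e_0=0$'' only covers $\dim M=\dim R$ as written. The paper handles this by the opening remark ``there is no harm in assuming $\dim M=\dim R$''; you should add the same reduction, or simply note that when $R$ is reduced and $\dim M<\dim R$, Lemma~\ref{lem: measure defect} already bounds each of $\length(R/I\otimes F_*^{e}M)$ and $p^{e\gamma}\length(M/IM)$ by $p^{e\gamma}\length(T/IT)$ for some $T$ with $\dim T=\dim M<\dim R$, so $D=T\oplus T$ works with $e_0=0$.
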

\begin{proof}
There is no harm in assuming that $\dim M = \dim R$. 
Fix any $e_0$ such that $\sqrt{0}^{[p^{e_0}]} = 0$. Then $F_*^{e + e_0} M$ 
is an $\red{R}$-module for any $e\geq 0$. Hence Corollary~\ref{cor: Frob rank equivalence} applies and yields that 
\[
\left |\length (R/I \otimes_R F_*^{e +1 + e_0} M)  - p^{\gamma} \length (R/I \otimes_R F_*^{e + e_0} M)\right | \leq \length (R/I \otimes_R F_*^e T), 
\]
where $\dim T < \dim R$.
Adding these equations up bounds the difference
\[
\left |\length (R/I \otimes_R F_*^{e + e_0} M)  - p^{e\gamma} \length (R/I \otimes_R F_*^{ e_0} M)\right | \leq \sum_{i = 0}^{e - 1} p^{(e - i)\gamma}\length (R/I \otimes_R F_*^i T).
\]
Lemma~\ref{lem: measure defect}
provides $T'$ such that  
$\length (R/I \otimes_R F_*^i T) \leq p^{i (\gamma - 1)} \length (T'/IT')$ and $\dim T' = \dim T < \dim R$. 
Set $D = T' \oplus T'$. 
It follows that 
\[
\left |\length (R/I \otimes_R F_*^{e + e_0} M)  - p^{e\gamma} \length (R/I \otimes_R F_*^{ e_0} M)\right | \leq \sum_{i = 0}^{e - 1} \frac{p^{e\gamma - i}}{2}\length (D/ID)
\leq p^{e\gamma}\length (D/ID).
\]
\end{proof}

This corollary, combined with the computation in Lemma~\ref{lem: numgen}, immediately shows that the sequence 
$\length (R/\frpe{I})/p^{e\dim R}$, appearing in the definition of Hilbert--Kunz multiplicity, converges by the Cauchy criterion. 
Even more, by passing to the limit and using the standard field extension result, it leads to the following convergence estimate.

\begin{corollary}\label{cor: convergence formula}
Let $(R, \mf m, k)$ be a local ring of characteristic $p > 0$. 
There exists a faithfully flat local $R$-algebra $S$, such that $S/\mf mS$ is an algebraically closed field, a finitely generated $S$-module $T$ with $\dim T < \dim R$, and a non-negative integer $e_0$ (which can be taken to be $0$ if $R$ is F-finite and reduced) with the following property: for any $\mf m$-primary ideal $I$
\[
\left |\ehk (I)  - p^{- e_0 \dim R}  \length_R (R/I^{[p^{e_0}]})\right | 
\leq \length_S (T/IT).
\]
\end{corollary}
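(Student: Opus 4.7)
The plan is to reduce to the F-finite setting treated in Corollary~\ref{cor: intermediate convergence} by passing to a carefully chosen faithfully flat extension $R \to S$, and then to transport the estimate back using the preservation of $\mf m$-primary colengths.

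First I would construct $S$. Complete $R$ to $\hat R$ and use Cohen's structure theorem to write $\hat R \cong k[[x_1,\ldots,x_n]]/J$. Let $\bar k$ be an algebraic closure of $k$ and set
\[
S \;=\; \bar k[[x_1,\ldots,x_n]] \big/ J\,\bar k[[x_1,\ldots,x_n]].
\]
Since $\bar k/k$ is faithfully flat, so is the induced extension on power series rings and hence on their quotients; composing with $R \to \hat R$ shows $R \to S$ is faithfully flat. By construction $\mf m S = \mf m_S$, the residue field $S/\mf m_S = \bar k$ is algebraically closed, $\dim S = \dim R$, and because $\bar k$ is perfect the ring $S$ is F-finite. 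The key consequences I would record are: for every $\mf m$-primary ideal $J \subseteq R$,
\[
\length_R(R/J) \;=\; \length_S(S/JS),
\]
obtained by applying $S \otimes_R -$ to a composition series (each quotient $k$ becomes $\bar k$, which is $S$-simple because $\mf m S = \mf m_S$); and, specializing $J = I^{[p^e]}$ and dividing by $p^{e\dim R}$, the identity $\ehk(I) = \ehk(IS)$.

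Next I would apply Corollary~\ref{cor: intermediate convergence} to $S$ with $M = S$ and ideal $IS$. Because the residue field of $S$ is perfect, $[k_S^{1/p}:k_S] = 1$, so the exponent $\gamma$ produced there equals $\dim R$. The corollary yields a non-negative integer $e_0$ and a finitely generated $S$-module $D$ with $\dim D < \dim R$ such that for every $e \geq 1$,
\[
\bigl| \length_S(S/I^{[p^{e+e_0}]}S) - p^{e\dim R}\,\length_S(S/I^{[p^{e_0}]}S) \bigr| \;\leq\; p^{e\dim R}\,\length_S(D/ID).
\]
If $R$ is F-finite and reduced, one may take $S = R$ together with an algebraic-closure extension and verify that $S$ inherits reducedness, allowing $e_0 = 0$ by the corresponding clause of Corollary~\ref{cor: intermediate convergence}.

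Finally I would divide by $p^{(e+e_0)\dim R}$ and let $e \to \infty$. Using the length identities from the second paragraph, the left-hand side converges to $\bigl|\ehk(I) - p^{-e_0\dim R}\length_R(R/I^{[p^{e_0}]})\bigr|$, while the right-hand side is bounded above by $p^{-e_0\dim R}\length_S(D/ID) \leq \length_S(D/ID)$. Setting $T = D$ completes the argument. The only genuinely nontrivial step is the construction of $S$ with all four properties (F-finite, faithfully flat, $\mf m$-adically unramified, algebraically closed residue field) simultaneously; everything else is routine bookkeeping once $S$ is available.
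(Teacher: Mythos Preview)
Your argument is correct and follows the paper's proof essentially verbatim: construct $S$ via Cohen's structure theorem and base change to $\bar k$, note that $\mf m$-primary colengths are preserved, apply Corollary~\ref{cor: intermediate convergence} over $S$ (where the perfect residue field makes $\gamma = \dim R$), then divide by $p^{(e+e_0)\dim R}$ and let $e \to \infty$.

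One small correction concerning the parenthetical: the assertion that $S$ ``inherits reducedness'' from an F-finite reduced $R$ can fail --- for instance $R = \mathbb{F}_2(t)[[x,y]]/(y^2 + tx^2)$ is a domain, but its base change to $\overline{\mathbb{F}_2(t)}$ is $\bar k[[x,y]]/(y + t^{1/2}x)^2$. To justify $e_0 = 0$ cleanly, apply Corollary~\ref{cor: intermediate convergence} directly to the F-finite reduced $R$ (the factors of $[k:k^p]$ cancel from both sides) to obtain an $R$-module $D$, and then take $T = D \otimes_R S$. The paper's own proof does not spell this point out either.
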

\begin{proof}
By Cohen's structure theorem, it is possible to write $\widehat{R}$ as a 
quotient of a power series ring $R = k[[x_1, \ldots, x_n]]/J$.
Denote $S = \overline{k} [[x_1, \ldots, x_n]]/J\overline{k} [[x_1, \ldots, x_n]]$.
By tensoring a composition series of a finite length $R$-module $M$ with $S$, it is easy to see that 
$\length_R (M) = \length_S (M \otimes_R S)$. 
In particular, $\ehk (IS) = \ehk (I)$ for any $\mf m$-primary ideal $I$. 
Hence, it suffices to replace $R$ with the F-finite ring $S$,
then the assertion follows from Corollary~\ref{cor: intermediate convergence} after dividing by $p^{(e + e_0)\gamma}$ and passing to the limit as $e \to \infty$.
\end{proof}

\subsection{Applications}

\subsubsection{Existence of F-signature} The estimate in Corollary~\ref{cor: convergence formula} provides uniform convergence of various families of ideals. 
This applies to F-signature as it can be expressed as the limit of a sequence of ideals 
that was introduced in \cite{AberbachEnescu, Yao}. 

\begin{lemma}\label{lem: splitting ideals}
Let $(R, \mf m, k)$ be an F-finite local domain of characteristic $p > 0$ and denote
\[
I_e = \left\{x \in R \mid \phi (F_*^e x) \in \mf m \text{ for all } \phi \in \Hom (F_*^e R, R) \right\}.
\]
Then 
\[
\frac{\length (R/I_e)}{p^{ed}} =
\frac{\max \{N \mid F_*^e R = \oplus^N R \oplus M\}}{\text{generic rank of } F_*^e R}.
\]

In particular, 
\[
\fsig(R) = \lim_{e \to \infty} \frac{\length (R/I_e)}{p^{ed}}.
\]
\end{lemma}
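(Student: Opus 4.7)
The plan is to reduce the formula to the identity $[k^{1/p^e}:k]\length_R(R/I_e) = a_e(R)$, and then divide by the generic rank of $F_*^e R$ computed in Theorem~\ref{thm: Kunz rank}. The ``in particular'' statement is then immediate from Definition~\ref{def: Fsig}. The key observation is that $I_e$ arises naturally as the kernel of an $R$-linear map out of $F_*^e R$ whose target has $R$-length equal to $a_e(R)$.

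First I would fix a decomposition $F_*^e R \cong R^{\oplus a_e(R)} \oplus M$ in which $M$ has no free direct summand (such $M$ is well-defined up to isomorphism by Krull--Schmidt, as recalled in Lemma~\ref{lemma: summands}). The standard ``dual basis'' characterization of splittings gives that $M$ has no free summand if and only if $\psi(m) \in \mathfrak{m}$ for every $\psi \in \Hom(M,R)$ and every $m \in M$. Using the induced decomposition $\Hom(F_*^e R, R) \cong R^{\oplus a_e(R)} \oplus \Hom(M,R)$ and letting $\pi_1,\dots,\pi_{a_e(R)}$ denote the projections to the free summands, a direct unwinding of the definition shows
\[
I_e = \{x \in R \mid \pi_i(F_*^e x) \in \mathfrak{m} \text{ for } i = 1,\dots,a_e(R)\}.
\]

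Next, consider the $R$-linear map $\Psi \colon F_*^e R \to k^{\oplus a_e(R)}$ whose $i$-th coordinate is the reduction of $\pi_i$ modulo $\mathfrak{m}$. By the previous step, $\ker \Psi$ identifies, under the identity on underlying sets, with the ideal $I_e \subseteq R$; note that $I_e$ is indeed an ideal because precomposition with multiplication by $F_*^e r$ preserves $\Hom(F_*^e R, R)$. Since $\Psi$ is surjective (the projections $\pi_i$ together hit a free summand), we obtain an isomorphism of $R$-modules
\[
F_*^e(R/I_e) \;\cong\; k^{\oplus a_e(R)}.
\]
Taking $R$-length, and using that $F_*^e$ is exact with $F_*^e k \cong k^{1/p^e}$ (so $\length_R(F_*^e N) = [k^{1/p^e}:k]\length_R(N)$ for any finite length $N$), we get $[k^{1/p^e}:k]\length_R(R/I_e) = a_e(R)$. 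By Theorem~\ref{thm: Kunz rank} the generic rank of $F_*^e R$ is $p^{ed}[k^{1/p^e}:k]$, so dividing by it yields the stated equality. Passing to the limit as $e \to \infty$ and applying Definition~\ref{def: Fsig} gives the final formula for $\fsig(R)$.

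The main place where care is needed is the bookkeeping between viewing $I_e$ simultaneously as an ideal of $R$ and as an $R$-submodule of $F_*^e R$: one must verify that the quotient of $F_*^e R$ by this submodule is indeed $F_*^e(R/I_e)$, which comes down to the compatibility of the Frobenius twist with taking cokernels. Once this is set up, the length computation is routine.
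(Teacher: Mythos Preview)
Your proof is correct and follows essentially the same approach as the paper: both arguments hinge on the observation that $\phi(M)\subseteq\mf m$ for every $\phi\in\Hom(F_*^eR,R)$ when $M$ has no free summand, and both deduce the identity $a_e(R)=[k^{1/p^e}:k]\length(R/I_e)$ before invoking Theorem~\ref{thm: Kunz rank}. The only cosmetic difference is that you package the computation as an explicit surjection $\Psi\colon F_*^eR\to k^{\oplus a_e(R)}$ with kernel $F_*^eI_e$, whereas the paper phrases it as a pair of length inequalities; the content is the same.
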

\begin{proof}
Let $a_e = a_e(R)$ and take a decomposition $F_*^e R = R^{\oplus a_e} \oplus M_e$. The key observation is that for any $\phi \in \Hom (F_*^e R, R)$ there is a containment $\phi (M_e) \subseteq \mf m$, otherwise there will be at least one more splitting. Therefore, $\mf m F_*^e R + M_e = \mf m^{\oplus a_e} \oplus M_e \subseteq F_*^e I_e$, 
so $R^{\oplus {a_e}} \otimes_R R/\mf m$ surjects onto $F_*^e (R/I_e)$ bounding its length from above. 
 On the other hand, the minimal generators of $R^{\oplus a_e}$ are clearly not in $F_*^e I_e$, so 
\[a_e = \length_R (F_*^e (R/I_e)) = [k^{1/p^e} : k] \length (R/I_e)\]
and the assertion follows from Theorem~\ref{thm: Kunz rank}.
\end{proof}

\begin{theorem}[Tucker, \cite{Tucker}]
The limit in the definition of $\fsig(R)$ exists. 
\end{theorem}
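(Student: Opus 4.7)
The plan is to leverage the uniform convergence estimate in Corollary~\ref{cor: convergence formula}: by Lemma~\ref{lem: splitting ideals}, $\fsig(R) = \lim_e \length(R/I_e)/p^{ed}$ where $d = \dim R$, so it suffices to show this limit exists. Denote $b_e := \length(R/I_e)/p^{ed}$; the strategy is to approximate $b_e$ by an auxiliary Hilbert--Kunz-type quantity whose convergence is monotone.

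First I would observe the containment $\frpe{\mf m} \subseteq I_e$. Indeed, any element of $\frpe{\mf m}$ has the form $\sum r_j m_j^{p^e}$ with $m_j \in \mf m$, and $F_*^e(r_j m_j^{p^e}) = m_j \cdot F_*^e(r_j)$ lies in $\mf m F_*^e R$, so every $\phi \in \Hom(F_*^e R, R)$ sends it into $\mf m$. Applying Corollary~\ref{cor: convergence formula} to $I = I_e$ (taking $e_0 = 0$ since $R$ is F-finite and reduced) yields $|\ehk(I_e) - \length(R/I_e)| \leq \length_S(T/I_e T)$, where $T$ is a fixed $S$-module with $\dim_S T < d$. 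The containment $\frpe{\mf m}S \subseteq I_e S$ gives $\length_S(T/I_e T) \leq \length_S(T/\frpe{\mf m}T)$, and since $\dim_S T < d$ the ratio $\length_S(T/\frpe{\mf m}T)/p^{ed}$ tends to $0$ (the Hilbert--Kunz multiplicity of a module of dimension less than $d$ vanishes by Proposition~\ref{prop: HK properties}(1)). Hence $|b_e - \ehk(I_e)/p^{ed}| \to 0$, and it suffices to show that $\ehk(I_e)/p^{ed}$ converges.

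For that I would invoke the standard containment $I_e^{[p^s]} \subseteq I_{e+s}$ from the theory of splitting ideals. Granting this, Proposition~\ref{prop: HK properties}(2) combined with the fact that $\ehk$ reverses inclusions of $\mf m$-primary ideals yields $\ehk(I_{e+s}) \leq \ehk(I_e^{[p^s]}) = p^{sd}\ehk(I_e)$, so the sequence $\ehk(I_e)/p^{ed}$ is nonincreasing and nonnegative, hence convergent. Combined with the previous paragraph, $b_e$ converges, establishing the existence of $\fsig(R)$.

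The main obstacle is justifying $I_e^{[p^s]} \subseteq I_{e+s}$: not every $\psi \in \Hom(F_*^{e+s}R, R)$ is a composition of the form $\psi_0 \circ F_*^s(\phi)$ with $\phi \in \Hom(F_*^e R, R)$ and $\psi_0 \in \Hom(F_*^s R, R)$, so one cannot argue purely by composition of splittings. The cleanest route uses the identification $F_*^e I_e = \mf m F_*^e R + M_e$ appearing in the proof of Lemma~\ref{lem: splitting ideals} (where $M_e$ is a maximal nonfree complementary summand of $F_*^e R$) and tracks how this decomposition propagates under further pushforward by $F_*^s$.
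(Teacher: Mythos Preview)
Your approach is essentially the paper's: use Corollary~\ref{cor: convergence formula} to replace $\length(R/I_e)/p^{ed}$ by $\ehk(I_e)/p^{ed}$ up to a vanishing error, then use $I_e^{[p]}\subseteq I_{e+1}$ to see the latter is monotone. (A cosmetic difference: the paper bounds the error via $\mf m^{\mu p^e}\subseteq \frpe{\mf m}$ and the Hilbert--Samuel polynomial of $T$, whereas you bound $\length_S(T/I_eT)$ by $\length_S(T/\frpe{\mf m}T)$ and invoke $\dim T<d$ directly; your route is arguably cleaner.)

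The ``obstacle'' you flag is not one, and your worry is pointed in the wrong direction. To show $I_e^{[p]}\subseteq I_{e+1}$ you do \emph{not} need every $\psi\in\Hom(F_*^{e+1}R,R)$ to factor through some $F_*\phi$; you only need, given $\psi$ and $x\in I_e$, to produce a single $\phi\in\Hom(F_*^eR,R)$ witnessing $\psi(F_*^{e+1}(x^p))\in\mf m$. But precomposition with the $R$-linear Frobenius map $F_*^eR\to F_*^{e+1}R$, $F_*^e r\mapsto F_*^{e+1}(r^p)$, does exactly this: set $\phi(F_*^e r)=\psi(F_*^{e+1}(r^p))$. Since $x\in I_e$ forces $\phi(F_*^e x)\in\mf m$, you get $\psi(F_*^{e+1}(x^p))\in\mf m$ for every $\psi$, hence $x^p\in I_{e+1}$. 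The paper simply says ``it is easy to see from the definition that $I_e^{[p]}\subseteq I_{e+1}$''; drop your final paragraph and replace it with this one-line check.
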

\begin{proof}
It is easy to see from the definition that $I_e^{[p]}\subseteq I_{e + 1}$, hence $\frpe{\mf m} \subseteq I_e$.  Furthermore, 
if $\mf m$ can be generated by $\mu$ elements, then 
$\mf m^{\mu p^e} \subseteq \mf m^{[p^e]}$ and 
it follows from Corollary~\ref{cor: convergence formula} that 
$\left |\ehk (I_e)  - \length (R/I_e)\right | \leq \length (T/\mf m^{\mu p^e} T)$. 
The existence of the Hilbert--Samuel polynomial, $n \mapsto \length (T/\mf m^nT)$ for $n \gg 0$, 
allows to find a constant $C$ such that the bound $\length (T/\mf m^{\mu p^e} T) < C p^{e \dim T}$ holds for all $e \gg 0$.
The proof is finished after noting that $\lim_{e \to \infty} \ehk (I_e)/p^{e\dim R}$ exists 
as this sequence is monotone due to the containment $I_e^{[p]} \subseteq I_{e + 1}$. 
\end{proof}

\subsubsection{The localization inequality for Hilbert--Kunz multiplicity as a swap of limits}
The main application needed in the next section is a formula presenting 
the Hilbert--Kunz multiplicity of $R_\mf p$ as a limit of $\mf m$-primary ideals. 
For this, consider a two-parameter family of ideals $(\frpe{I}, x^{np^e})$ where $I$ is an ideal such that $\dim R/I = 1$ and $x\in \mf m$ is such that $(I,x)$ is $\mf m$-primary. The first step  is to get control on the right-hand side of Corollary~\ref{cor: convergence formula}.

\begin{lemma}\label{lem: two parameter}
Let $(R, \mf m)$ be a local ring of characteristic $p > 0$, $I$ be an ideal
such that $\dim R/I = 1$,  and $M$ be a finitely generated $R$-module of dimension $d$. 
If $x \in \mf m$ is such that $(I, x)$ is $\mf m$-primary then 
there exists a constant $C$ 
such that $\length (M/(\frpe{I}, x^{np^e})M) < C p^{ed}n$. 
\end{lemma}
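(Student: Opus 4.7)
\emph{The plan.} The plan is to filter $M/(I^{[p^e]}, x^{np^e})M$ by powers of $x^{p^e}$, reducing the required bound to the single-exponent case $\length(M/(I, x)^{[p^e]}M)$, which is then controlled by the uniform convergence estimates established earlier in this section.

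First, I would reduce to a setting where the ambient ring has dimension exactly $d$ by passing to $\bar R := R/\Ann_R M$. Then $\bar R$ is a local ring of dimension $d$, the ideal $(I, x)\bar R$ is still primary to the maximal ideal of $\bar R$, and a minimal surjection $\bar R^{\numg(M)} \twoheadrightarrow M$ gives
\[
\length_R(M/KM) \leq \numg(M)\length_{\bar R}(\bar R/K\bar R)
\]
for every ideal $K$ of $R$; hence it suffices to prove the bound with $M$ replaced by $\bar R$. Setting $J := (I^{[p^e]}, x^{np^e})\bar R$, I would then consider the descending filtration $F_k := (x^{kp^e}\bar R + J)/J$ for $k = 0, 1, \ldots, n$. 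Since $x^{np^e}\bar R \subseteq x^{kp^e}\bar R$ whenever $k \leq n$, one has $F_k = (x^{kp^e}\bar R + I^{[p^e]}\bar R)/J$; in particular $F_0 = \bar R/J$ and $F_n = 0$. The second isomorphism theorem identifies $F_k/F_{k+1}$ with a quotient of $x^{kp^e}\bar R/(x^{(k+1)p^e}\bar R + x^{kp^e} I^{[p^e]}\bar R)$, and the surjection $\bar R \twoheadrightarrow x^{kp^e}\bar R$ given by multiplication by $x^{kp^e}$ exhibits this in turn as a quotient of $\bar R/(I, x)^{[p^e]}\bar R$. Summing lengths over $k = 0, \ldots, n-1$ yields
\[
\length(\bar R/J) \leq n\length(\bar R/(I, x)^{[p^e]}\bar R).
\]

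Finally, since $(I, x)\bar R$ is primary to the maximal ideal of $\bar R$ and $\dim \bar R = d$, Corollary~\ref{cor: convergence formula} applied to $\bar R$ produces a constant $C'$ independent of $e$ with $\length(\bar R/(I, x)^{[p^e]}\bar R) \leq C' p^{ed}$, and the lemma follows with $C = \numg(M)C'$. The only delicate step is identifying each successive quotient $F_k/F_{k+1}$ as a quotient of $\bar R/(I, x)^{[p^e]}\bar R$: one must verify that the containments $x^{np^e}\bar R \subseteq x^{kp^e}\bar R$ correctly absorb the term $x^{np^e}\bar R$ from $J$ when passing to each graded piece. Beyond this routine Artin--Rees-style bookkeeping, the argument is a direct application of the uniform convergence estimate.
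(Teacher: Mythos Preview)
Your proof is correct and follows essentially the same route as the paper: reduce to the ring $\bar R = R/\Ann_R M$, filter $\bar R/(I^{[p^e]}, x^{np^e})\bar R$ by powers of $x^{p^e}$ to obtain $\length(\bar R/(I^{[p^e]}, x^{np^e})\bar R) \leq n\,\length(\bar R/(I,x)^{[p^e]}\bar R)$, and then bound the single Hilbert--Kunz function of $(I,x)$. The only cosmetic differences are that the paper phrases the filtration step via the short exact sequences $0 \to A/(xA + 0:_A x^k) \to A/x^{k+1}A \to A/x^kA \to 0$ in $A = \bar R/I^{[p^e]}\bar R$, and that for the final bound you need not invoke Corollary~\ref{cor: convergence formula} in full strength---the convergence of $\length(\bar R/(I,x)^{[p^e]}\bar R)/p^{ed}$ (or the elementary containment $(I,x)^{[p^e]} \supseteq (I,x)^{\mu p^e}$ together with the Hilbert--Samuel polynomial) already yields the constant $C'$.
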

\begin{proof}
Since $M$ can be written as a quotient of a free $(R/\Ann M)$-module, 
it suffices to show the statement for $M = R$. 
In $A = R/\frpe{I}$ we have exact sequences
\[
0 \to A/(xA + 0:_A x^n) \to A/x^{n+1}A \to A/x^{n}A \to 0
\]
so $\length (A/x^{n+1}A) \leq \length (A/x^{n}A) + \length (A/xA)$. 
Hence, by induction  
\[
\length (R/(\frpe{I}, x^{np^e})R) \leq n \length (R/(\frpe{I}, x^{p^e})R) 
\]
and it remains to note that the Hilbert--Kunz function of $(I, x)$ is bounded by some
$Cp^{ed}$ as discussed before.
\end{proof}

If $x$ is an element and $I$ is an ideal of a ring, then, with a slight abuse of notation, 
I will use $xR/I$ to denote the ideal generated by the image of $x$ in $R/I$. 

\begin{theorem}\label{thm: Hilbert-Kunz descent}
Let $(R, \mf m)$ be a local ring of characteristic $p > 0$, $I$ be an ideal
such that $\dim R/I = 1$ and $\hght I = \dim R - 1$.
Then we have 
\[
\lim_{n \to \infty} \frac{\ehk ((I, x^n))}{n} = \sum_{\mf p \in \Lambda} \eh (xR/\mf p) \ehk (IR_\mf p),
\]
where the sum is taken over minimal primes $\mf p$ of $I$ such that $\dim R/\mf p = 1$ and $\dim R_\mf p = \dim R - 1$. 
\end{theorem}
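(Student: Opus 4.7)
The plan is to swap the limits $\lim_n$ (appearing in the theorem) and $\lim_e$ (implicit in the definition of $\ehk$) via a uniform convergence estimate. An inspection of the proof of Corollary~\ref{cor: convergence formula} reveals that Corollary~\ref{cor: intermediate convergence} applied in the faithfully flat extension $S$ with algebraically closed residue field actually produces the sharper bound
\[
\left|\ehk(J) - \frac{\length_R(R/J^{[p^{e_0}]})}{p^{e_0 d}}\right| \leq \frac{\length_S(D/JD)}{p^{e_0 d}},
\]
valid for every $\mf m$-primary ideal $J$ and every $e_0 \geq 0$ (or beyond some threshold if $R$ is not already F-finite and reduced), with $D$ a fixed $S$-module satisfying $\dim D < d$. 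I apply this with $J=(I, x^n)$, divide by $n$, and send $n$ then $e_0$ to infinity.

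Fix $e_0$. Since $\sqrt{I^{[p^{e_0}]}}=\sqrt{I}$, the quotient $A_{e_0} := R/I^{[p^{e_0}]}$ has dimension one, and $x$ is a parameter on it. Hilbert--Samuel theory in dimension one yields
\[
\lim_{n \to \infty} \frac{\length_R(R/(I^{[p^{e_0}]}, x^{np^{e_0}}))}{n\, p^{e_0 d}} = \frac{\eh(xA_{e_0}; A_{e_0})}{p^{e_0(d-1)}} =: g(e_0),
\]
while $\length_S(D/(I, x^n)D)/n$ is bounded above by a constant $E$ independent of $e_0$ and $n$, because $D/ID$ is a module of dimension at most one over $S/IS$. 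Dividing the uniform estimate by $n$ and letting $n \to \infty$ gives
\[
\limsup_{n \to \infty}\left|\frac{\ehk((I, x^n))}{n} - g(e_0)\right| \leq \frac{E}{p^{e_0 d}}.
\]

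To identify $\lim_{e_0 \to \infty} g(e_0)$, I use the associativity formula: the minimal primes of $A_{e_0}$ of maximal dimension are exactly those $\mf p \in \Min(I)$ with $\dim R/\mf p = 1$, hence
\[
\eh(xA_{e_0}; A_{e_0}) = \sum_{\mf p} \eh(xR/\mf p)\, \length_{R_\mf p}(R_\mf p/I^{[p^{e_0}]}R_\mf p).
\]
Since $\hght \mf p \leq d-1$ always, division by $p^{e_0(d-1)}$ and letting $e_0 \to \infty$ sends the $\mf p$-summand to $\ehk(IR_\mf p)$ when $\mf p \in \Lambda$ and to $0$ otherwise (the length grows like $p^{e_0 \hght \mf p}$). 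Thus $g(e_0) \to L := \sum_{\mf p \in \Lambda} \eh(xR/\mf p)\, \ehk(IR_\mf p)$.

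Combining, the triangle inequality gives $\limsup_n|\ehk((I, x^n))/n - L| \leq E/p^{e_0 d} + |g(e_0) - L|$ for every $e_0$, and the right-hand side tends to $0$ as $e_0 \to \infty$. The analogous bound for $\liminf$ proves the limit exists and equals $L$. The main obstacle is the sharp form of the convergence estimate: the bound $\length_S(T/JT)$ stated in Corollary~\ref{cor: convergence formula} produces only an error of order $E$, which does not tend to zero as the parameters grow; the crucial $p^{-e_0 d}$ factor has to be recovered from the more refined Corollary~\ref{cor: intermediate convergence}.
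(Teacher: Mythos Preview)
Your approach is essentially the paper's: both establish uniform convergence of $\frac{1}{np^{ed}}\length(R/(I^{[p^e]},x^{np^e}))$ to $\ehk((I,x^n))/n$, then evaluate the iterated limit $\lim_e\lim_n$ via the associativity formula in $R/I^{[p^e]}$; the paper invokes the standard double-limit theorem while you spell out the triangle inequality. You are also right that Corollary~\ref{cor: convergence formula} as literally stated does not suffice and one must return to the finer estimate of Corollary~\ref{cor: intermediate convergence}---the paper does the same thing implicitly when it writes the error as $nC/p^{e-e_0}$.

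One small correction: the telescoping behind Corollary~\ref{cor: intermediate convergence}, redone from an arbitrary base exponent (or equivalently, applied with $J^{[p^{e_0}]}$ in place of $J$), yields
\[
\left|\ehk(J)-\frac{\length(R/J^{[p^{e_0}]})}{p^{e_0 d}}\right|\;\le\;\frac{\length_S(D/J^{[p^{e_0}]}D)}{p^{e_0 d}}\quad\text{or equivalently}\quad \frac{C\,\length_S(D'/JD')}{p^{e_0}},
\]
so the denominator is $p^{e_0}$, not $p^{e_0 d}$, when the numerator involves $D/JD$; your stated bound is too strong. This is harmless for the argument: with $J=(I,x^n)$ one has $\length_S(D'/(I,x^n)D')\le En$ because $\dim_S D'/ID'\le 1$, hence the error after dividing by $n$ is at most $CE/p^{e_0}\to 0$, and your triangle-inequality conclusion goes through unchanged. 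This is precisely how the paper arrives at its bound $nC/p^{e}$, via Lemma~\ref{lem: two parameter} applied to the lower-dimensional error module.
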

\begin{proof}
Plugging Lemma~\ref{lem: two parameter} into Corollary~\ref{cor: convergence formula} gives a convergence estimate
\[
\left |\ehk((I, x^n)) - \frac{1}{p^{ed}}\length (R/(I^{[p^{e}]},x^{np^{e}})) \right | < 
n\frac{C}{p^{e - e_0}} \leq n\frac{C}{p^{e}}.
\]
Therefore, the bisequence $a_{n,e} =  \frac{1}{np^{ed}}\length (R/(\frpe{I},x^{np^e}))$
converges uniformly in $e$. 
Since the limits $\lim_{e \to \infty} a_{n,e}$ and $\lim_{n \to \infty} a_{n, e} = \eh (x^{p^e}R/\frpe{I})/p^{ed}$ 
exist, the standard result says that the double limits exist and are interchangeable. 
Hence, by  the associativity formula for multiplicity applied in $R/\frpe{I}$
\begin{align*}
\lim_{e \to \infty} \lim_{n \to \infty} a_{e, n} &=
\lim_{e \to \infty} \frac{1}{p^{e(d-1)}} \eh((x)R/\frpe{I})
= \lim_{e \to \infty} \frac{1}{p^{e(d-1)}} \sum_{\mf p \in \Minh (R/\frpe{I})} \eh (xR/\mf p) \length (R_\mf p/\frpe{I}R_\mf p) \\
&=\sum_{\mf p \in \Minh (R/I)} \eh (xR/\mf p) \lim_{e \to \infty} \frac{1}{p^{e(d-1)}} \length (R_\mf p/\frpe{I}R_\mf p) = 
\sum_{\mf p \in \Lambda} \eh (xR/\mf p) \ehk (IR_\mf p), 
\end{align*}
where it was first used that $\Minh (R/\frpe{I}) = \Minh (R/I)$ and, second, that  
this set can be cut down to $\Lambda$ because the limit was taken over $p^{e(d-1)}$.
\end{proof}

An easy consequence is a proof of the localization property for Hilbert--Kunz multiplicity, originally this property was observed in \cite[Corollary~3.8]{Kunz2}.

\begin{corollary}\label{cor: localization}
Let $(R, \mf m)$ be a local ring of characteristic $p > 0$, 
and $\mf p$ be a prime ideal such that 
$\dim R_\mf p + \dim R/\mf p = \dim R$. 
Then $\ehk (R_\mf p) \leq \ehk (R)$.
\end{corollary}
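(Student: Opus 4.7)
The plan is to reduce to the case $r := \dim R/\mf p = 1$, where Theorem~\ref{thm: Hilbert-Kunz descent} applies directly, and then bootstrap to arbitrary $r$ by induction, factoring the localization $R \to R_\mf p$ through an intermediate localization $R \to R_\mf q \to R_\mf p$.

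For the base case $r = 1$, the hypothesis $\dim R_\mf p + \dim R/\mf p = d$ forces $\hght \mf p = d - 1$. Pick any $x \in \mf m$ with $(\mf p, x)$ being $\mf m$-primary, so Theorem~\ref{thm: Hilbert-Kunz descent} applies to $I = \mf p$. Since $\mf p$ is its own only minimal prime, $\Lambda = \{\mf p\}$ and the theorem collapses to
\[
\lim_{n \to \infty} \frac{\ehk((\mf p, x^n))}{n} = \eh(xR/\mf p)\, \ehk(R_\mf p).
\]
On the other hand, Proposition~\ref{prop: HK properties}(\ref{item: filtration}) gives $\ehk((\mf p, x^n)) \leq \length(R/(\mf p, x^n))\, \ehk(R)$; since $x$ is a parameter in the one-dimensional local domain $R/\mf p$, standard Hilbert--Samuel theory yields $\length(R/(\mf p, x^n))/n \to \eh(xR/\mf p)$. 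Combining the two estimates and canceling the positive factor $\eh(xR/\mf p)$ gives $\ehk(R_\mf p) \leq \ehk(R)$.

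For the inductive step $r \geq 2$, I would construct an intermediate prime $\mf q$ with $\mf p \subsetneq \mf q$ satisfying
\[
\dim R/\mf q = r - 1, \qquad \dim R_\mf q = d - r + 1, \qquad \dim R_\mf q/\mf p R_\mf q = 1.
\]
Given such a $\mf q$, the dimension formula $\dim R_\mf q + \dim R/\mf q = d$ lets the inductive hypothesis give $\ehk(R_\mf q) \leq \ehk(R)$; and inside $R_\mf q$, the prime $\mf p R_\mf q$ satisfies $\dim (R_\mf q)_{\mf p R_\mf q} + \dim R_\mf q/\mf p R_\mf q = (d - r) + 1 = \dim R_\mf q$, so the base case applied in $R_\mf q$ yields $\ehk(R_\mf p) \leq \ehk(R_\mf q)$. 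Chaining the two inequalities finishes the induction.

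The main obstacle is the construction of $\mf q$ without assuming that $R$ is catenary. Here I would pick any $y \in \mf m \setminus \mf p$; in the local domain $R/\mf p$ of dimension $r \geq 2$, Krull's principal ideal theorem gives $\dim R/(\mf p, y) = r - 1$, and one takes $\mf q$ to be a minimal prime of $(\mf p, y)$ realizing this quotient dimension. Krull again shows $\mf q/\mf p$ has height $1$ in $R/\mf p$, giving $\dim R_\mf q/\mf p R_\mf q = 1$. The remaining equality $\dim R_\mf q = d - r + 1$ then drops out of the sandwich
\[
(d - r) + 1 = \dim R_\mf p + \dim R_\mf q/\mf p R_\mf q \leq \dim R_\mf q \leq d - \dim R/\mf q = d - r + 1,
\]
whose outer inequalities come for free by concatenating chains of primes.
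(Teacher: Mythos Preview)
Your proof is correct and follows essentially the same route as the paper: reduce to the case $\dim R/\mf p = 1$ via an intermediate prime, then combine Theorem~\ref{thm: Hilbert-Kunz descent} with the Lech-type inequality of Proposition~\ref{prop: HK properties}(\ref{item: filtration}). The paper compresses the reduction step into a single sentence (``by localizing at an appropriate prime $\mf p \subsetneq \mf q \subseteq \mf m$''), whereas you spell out the construction of $\mf q$ and carefully verify the dimension formula $\dim R_\mf q + \dim R/\mf q = d$ without assuming catenarity; your sandwich argument for $\dim R_\mf q$ is exactly the extra work needed to make that sentence rigorous.
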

\begin{proof}
By localizing at an appropriate prime $\mf p \subsetneq \mf q \subseteq \mf m$,
it is enough to consider the case $\dim R/\mf p = 1$. 
Let $x$ be any element such that $(\mf p, x)$ is $\mf m$-primary. 
By Theorem~\ref{thm: Hilbert-Kunz descent}
\[
\lim_{n \to \infty} \frac{\ehk ((\mf p, x^n))}{n} = 
\eh (xR/\mf p) \ehk (R_\mf p).
\]
On the other hand, by (\ref{item: filtration}) of Proposition~\ref{prop: HK properties}
\[
\ehk ((\mf p, x^n)) \leq \length (R/(\mf p, x^n)) \ehk (R). 
\]
Thus, after taking the limit as $n \to \infty$ we obtain
$\eh (xR/\mf p) \ehk (R_\mf p) \leq \eh (xR/\mf p) \ehk (R)$. 
\end{proof}

\section{Equimultiplicity for Hilbert--Kunz multiplicity}\label{sec: Hilbert-Kunz}

I will describe a necessary condition for $\ehk$-equimultiplicity in the special case where $\dim R/\mf p = 1$. This case is easier to treat but still suffices for the applications. The ideas originate from \cite{SmirnovEqui}, however it was assumed there that $R/\mf p$ is regular as in Theorem~\ref{thm: multiplicity criterion}.

\subsection{A review of tight closure}
The characterization of equimultiplicity will be given in terms of tight closure, so let me list the needed facts from \cite{HochsterHuneke1}. A good introduction to the tight closure theory is \cite{Hochster}.

\begin{theorem}\label{thm: tc review}
Let $R$ be a Noetherian ring of characteristic $p > 0$.
\begin{enumerate}
\item $x \in I^*$ if and only if $cx^{p^e} \in \frpe{I}$ for all $e \gg 0$ and $c\in R$ which is not contained in any minimal prime. 
\item If $R$ is complete and reduced, then by \cite{HHSmooth} 
it has a \emph{test element}: 
an element $c$ such that $c$ is not contained in any minimal prime and the containment $cx^{p^e} \in \frpe{I}$ holds for all $I$, all $x \in I^*$, and all $e \geq 1$. 
Test elements exist if $R$ is complete and reduced by  \cite{HHSmooth}.
\item If $R$ is formally unmixed (i.e., $\Ass (\widehat{R}) = \Minh (\widehat{R})$) then for a pair of $\mf m$-primary ideals 
$I \subseteq J$ we have $\ehk (I) = \ehk (J)$ if and only if $J \subseteq I^*$.
\end{enumerate}
\end{theorem}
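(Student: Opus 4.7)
The plan splits according to the three items. Since (1) is essentially the Hochster--Huneke definition of tight closure (so both directions are immediate once one unfolds the definition of $I^*$) and (2) is a direct citation of [HHSmooth] (the gamma-construction extracts test elements from generic freeness of the Frobenius), I would focus the real work on (3).

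For the easy direction of (3), that is $J \subseteq I^* \Rightarrow \ehk(I) = \ehk(J)$, I would pass to $\widehat{R}$ (length, $\ehk$, and the formally unmixed hypothesis all survive) and pick a test element $c$ using (2). The containment $cJ^{[p^e]} \subseteq I^{[p^e]}$ means $J^{[p^e]}/I^{[p^e]} \subseteq (I^{[p^e]} : c)/I^{[p^e]}$, and the exact sequence
\[
0 \to (I^{[p^e]} : c)/I^{[p^e]} \to R/I^{[p^e]} \xrightarrow{\;c\;} R/I^{[p^e]} \to R/(c, I^{[p^e]}) \to 0
\]
identifies the left-hand length with $\ell(R/(c, I^{[p^e]}))$. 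Since $\widehat{R}$ is formally unmixed and $c$ avoids every minimal prime, $\dim \widehat{R}/c\widehat{R} \leq d - 1$, so Proposition~\ref{prop: HK properties}(\ref{item: filtration}) applied inside $R/cR$ bounds $\ell(R/(c, I^{[p^e]})) = O(p^{e(d-1)}) = o(p^{ed})$. Dividing by $p^{ed}$ and letting $e \to \infty$ yields $\ehk(I) = \ehk(J)$.

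For the hard direction, I would reduce to $J = I + (x)$ for a single $x$ and pass to $\widehat{R}$. Formal unmixedness guarantees every $\mathfrak{q} \in \Minh(\widehat{R})$ satisfies $\dim \widehat{R}/\mathfrak{q} = d$, so the associativity formula (Proposition~\ref{prop: HK properties}(\ref{item: associative})) applied to both $R/I^{[p^e]}$ and $R/J^{[p^e]}$ expresses each Hilbert--Kunz multiplicity as a positive combination over $\mathfrak{q}$; since each summand can only decrease when $I$ is replaced by $J$, equality of the two sums forces equality in every domain $\widehat{R}/\mathfrak{q}$. This reduces the problem to a complete local domain, where from $\ehk(I) = \ehk(I+(x))$ one extracts the length estimate $\ell(R/(I^{[p^e]} : x^{p^e})) = o(p^{ed})$ (again via the short exact sequence $0 \to (I^{[p^e]} : x^{p^e})/I^{[p^e]} \to R/I^{[p^e]} \xrightarrow{x^{p^e}} R/(I^{[p^e]}, x^{p^e})$). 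Choosing a test element $c_0$ from (2), a limit/persistence argument in the Hochster--Huneke spirit produces an element $c$ that lies in every colon $(I^{[p^e]} : x^{p^e})$, which by definition means $x \in I^*$.

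The main obstacle is the last step in the hard direction: converting the collective smallness $\ell(R/(I^{[p^e]} : x^{p^e})) = o(p^{ed})$ into a single non-zero element of $\bigcap_e (I^{[p^e]} : x^{p^e})$. In a complete local domain this can be argued by noting that the non-tight-closure locus is closed, so the colons cannot shrink to zero inside an ideal whose quotient-colengths are subexponentially small compared to $\ehk(R) p^{ed}$; alternatively, one invokes uniform persistence of test elements. The associativity/unmixedness reductions and the easy direction are routine once this domain case is in hand.
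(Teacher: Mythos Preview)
The paper does not prove this theorem at all: it is explicitly stated as a review of facts from \cite{HochsterHuneke1} and \cite{HHSmooth}, with no proof supplied. So there is no ``paper's own proof'' to compare against; your proposal is an independent attempt at the literature result.

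Your treatment of (1), (2), and the easy direction of (3) is fine in outline. One small technical point: you pass to $\widehat{R}$ and invoke (2) to get a test element, but (2) requires $\widehat{R}$ to be \emph{reduced}, which formal unmixedness alone does not guarantee. This is repairable (pass to $\widehat{R}_{\mathrm{red}}$ and use associativity, since every associated prime of $\widehat{R}$ is minimal of full dimension), but you should flag it.

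The hard direction has a genuine gap, and you have correctly located it. From $\ehk(I)=\ehk(I,x)$ your exact sequence indeed yields $\length\bigl(R/(I^{[p^e]}:x^{p^e})\bigr)=o(p^{ed})$, and the problem is precisely to produce a single nonzero $c\in\bigcap_e(I^{[p^e]}:x^{p^e})$. Neither of your two suggested fixes actually works as stated. ``The non-tight-closure locus is closed'' is a statement about primes, not about a descending family of $\mf m$-primary colon ideals, and there is no direct mechanism by which it prevents $\bigcap_e(I^{[p^e]}:x^{p^e})=0$. ``Uniform persistence of test elements'' is circular here: a test element witnesses membership in $I^*$ once you already know $x\in I^*$, but gives nothing if $x\notin I^*$. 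The actual Hochster--Huneke argument in a complete local domain goes through a module-finite extension $A\hookrightarrow R$ with $A$ regular (Cohen's structure theorem) and exploits flatness of Frobenius over $A$ to show that if $x\notin I^*$ then the colength $\length\bigl(R/(I^{[p^e]}:x^{p^e})\bigr)$ is bounded below by a positive constant times $p^{ed}$. That lower bound is the missing idea; without it your sketch does not close.
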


\begin{lemma}\label{lemma: tc intersect}
Suppose that $(R, \mf m)$ has a test element $c$. 
Let $I$ be an ideal and $x \in \mf m$ be an element. 
Then 
\[
\cap_{n \geq 1} (I + x^n)^* = I^*
\]
\end{lemma}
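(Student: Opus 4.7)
The plan is to show the two containments separately. The direction $I^{*}\subseteq \bigcap_{n}(I+x^{n})^{*}$ is immediate from monotonicity of tight closure in the ideal: since $I\subseteq I+x^{n}$ for every $n$, we get $I^{*}\subseteq (I+x^{n})^{*}$, and intersecting in $n$ gives the claim. So the content is in the reverse containment, and the key tool is the hypothesis that $R$ admits a test element $c$, which lets us replace an ``$e\gg 0$ with quantifier on $c$'' statement by a single uniform $c$ that works for all $e\ge 1$ simultaneously.

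Suppose $y\in\bigcap_{n\ge 1}(I+x^{n})^{*}$. Using the test element $c$, the definition of tight closure (Theorem~\ref{thm: tc review}(2)) gives, for each fixed $n$ and every $e\ge 1$,
\[
c\,y^{p^{e}}\in (I+x^{n})^{[p^{e}]}=I^{[p^{e}]}+x^{np^{e}}R,
\]
where the last equality is the standard fact that Frobenius powers distribute over ideal sums on generators. Now fix $e\ge 1$ and let $n$ vary. The powers $x^{np^{e}}$ are cofinal in the chain $\{x^{m}R\}_{m\ge 1}$, so
\[
c\,y^{p^{e}}\in\bigcap_{m\ge 1}\bigl(I^{[p^{e}]}+x^{m}R\bigr).
\]

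At this point I would invoke the Krull intersection theorem inside the local ring $R/I^{[p^{e}]}$: since $x\in\mf m$, we have $\bigcap_{m}x^{m}(R/I^{[p^{e}]})\subseteq\bigcap_{m}\mf m^{m}(R/I^{[p^{e}]})=0$, so the displayed intersection collapses to $I^{[p^{e}]}$. Hence $c\,y^{p^{e}}\in I^{[p^{e}]}$ for every $e\ge 1$. Since $c$ is a test element (in particular, not contained in any minimal prime), the definition in Theorem~\ref{thm: tc review}(1) yields $y\in I^{*}$, finishing the proof.

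I do not expect a real obstacle here; the one subtle point worth flagging is that this argument crucially uses the \emph{uniformity} of the test element across all $e$ and all ideals, because we are applying the tight closure membership to infinitely many ideals $(I+x^{n})^{*}$ at once and need the same multiplier $c$ to witness all of them. Without a test element, one would only get multipliers $c_{n}$ depending on $n$, and the Krull intersection step at fixed $e$ would break down.
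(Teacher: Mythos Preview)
Your proof is correct and follows essentially the same argument as the paper: use the test element to get $cy^{p^{e}}\in I^{[p^{e}]}+x^{np^{e}}R$ for all $e,n\ge 1$, then apply the Krull intersection theorem in $R/I^{[p^{e}]}$ to conclude $cy^{p^{e}}\in I^{[p^{e}]}$ for every $e$, hence $y\in I^{*}$. Your added remark on why the uniformity of the test element across all ideals is essential is a nice clarification.
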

\begin{proof}
Take an element $z$ in the intersection on the left. Since $c$ is a test element,
$cz^{p^e} \in (\frpe{I}, x^{np^e})$ for all $e, n \geq 1$. 
But $\frpe{I} = \cap_n (\frpe{I}, x^{np^e})$ by Krull's intersection theorem in $R/\frpe{I}$, so it follows that $z \in I^*$. The opposite containment is clear. 
\end{proof}

\begin{lemma}\label{lemma: filtration in tight closure}
Let $(R, \mf m)$ be a formally unmixed local ring of dimension $d$ and characteristic $p > 0$ and $I$ be an $\mf m$-primary ideal. 
Suppose that $L_e$ is a sequence of ideals such that 
\begin{enumerate}
\item $\frpe{I} \subseteq L_e$, 
\item $L_e^{[p]} \subseteq L_{e + 1}$,
\item $\lim_{e \to \infty} \length (R/L_e)/p^{ed} = \ehk (I)$.
\end{enumerate}
Then $L_e \subseteq (\frpe{I})^*$. 
\end{lemma}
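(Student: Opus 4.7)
The plan is to apply Theorem~\ref{thm: tc review}(3) to the inclusion $\frpe{I} \subseteq L_e$ of $\mf m$-primary ideals: it suffices to show $\ehk(L_e) = \ehk(\frpe{I})$, since the formally unmixed hypothesis is available. Property (2) of Proposition~\ref{prop: HK properties} already gives $\ehk(\frpe{I}) = p^{ed}\ehk(I)$, so the target identity becomes $\ehk(L_e) = p^{ed}\ehk(I)$.

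One inequality is free. The containment $\frpe{I} \subseteq L_e$ forces $\length(R/\frpe{I}^{[p^{e'}]}) \geq \length(R/L_e^{[p^{e'}]})$ for every $e'$, and dividing by $p^{e'd}$ and letting $e' \to \infty$ yields
\[
\ehk(L_e) \leq \ehk(\frpe{I}) = p^{ed}\ehk(I).
\]

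For the opposite inequality I would iterate the Frobenius containment $L_e^{[p]}\subseteq L_{e+1}$. By induction this gives $L_e^{[p^{e'}]} \subseteq L_{e+e'}$ for all $e' \geq 0$, hence $\length(R/L_e^{[p^{e'}]}) \geq \length(R/L_{e+e'})$. Dividing by $p^{e'd}$ and rewriting,
\[
\frac{\length(R/L_e^{[p^{e'}]})}{p^{e'd}} \;\geq\; p^{ed}\cdot \frac{\length(R/L_{e+e'})}{p^{(e+e')d}}.
\]
Taking $e' \to \infty$ and invoking assumption (3), the right-hand side tends to $p^{ed}\ehk(I)$, so $\ehk(L_e) \geq p^{ed}\ehk(I)$. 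Combining the two inequalities gives equality, and Theorem~\ref{thm: tc review}(3) then concludes that $L_e \subseteq (\frpe{I})^*$.

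I do not expect any serious obstacle here: the argument is essentially a monotonicity-plus-limit game. The only thing to be careful about is the direction of the inequality between the Hilbert--Kunz multiplicity and inclusion of ideals, and to correctly reindex the sequence $\length(R/L_{e+e'})/p^{(e+e')d}$ to extract the factor $p^{ed}$ when comparing with $\ehk(L_e)$. Existence of the limit defining $\ehk(L_e)$ is assured by the theory from Section~\ref{convergence}, so no separate convergence issue arises.
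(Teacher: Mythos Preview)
Your argument is correct and is essentially the paper's own proof, just with the details spelled out: the paper records the chain $I^{[p^{e+e'}]}\subseteq L_e^{[p^{e'}]}\subseteq L_{e+e'}$ and says ``it is easy to see that $\ehk(L_e)=\ehk(\frpe{I})$'', which is exactly the two-sided squeeze you carry out explicitly before invoking Theorem~\ref{thm: tc review}(3).
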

\begin{proof}
The assumptions give containments $I^{[p^{e + e'}]} \subseteq L_e^{[p^{e'}]} \subseteq L_{e + e'}$.
Then it is easy to see that  $\ehk (L_e) = \ehk (\frpe{I})$ and the assertion follows from Theorem~\ref{thm: tc review}. 
\end{proof}

There is also a converse. 

\begin{lemma}\label{lemma: tc limit}
Let $(R, \mf m)$ be a local ring of characteristic $p > 0$ and $I$ be an $\mf m$-primary ideal. 
Let $I_e$ be a sequence of ideals such that $I^{[p^e]} \subseteq I_e \subseteq (I^{[p^e]})^*$.
If $R$ has a test element $c$, then
\[
\lim\limits_{e \to \infty} \frac{1}{p^{e\dim R}} \length(R/I_e) = \ehk (I).
\]
\end{lemma}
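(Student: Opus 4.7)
The plan is to prove the limit formula in two stages. Stage one establishes the Hilbert--Kunz identity $\ehk(I_e) = p^{ed}\ehk(I)$, where $d = \dim R$; stage two upgrades this identity to a statement about the colengths $\length(R/I_e)$ themselves via the uniform convergence supplied by Corollary~\ref{cor: convergence formula}.

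For stage one, the upper bound $\ehk(I_e) \leq \ehk(I^{[p^e]}) = p^{ed}\ehk(I)$ is immediate from $I^{[p^e]} \subseteq I_e$ and property~(2) of Proposition~\ref{prop: HK properties}. For the reverse bound, I exploit the test element: raising $I_e \subseteq (I^{[p^e]})^*$ to the $p^{e'}$th Frobenius power and applying the defining property of $c$ gives $c\, I_e^{[p^{e'}]} \subseteq I^{[p^{e+e'}]}$, hence
\[
I_e^{[p^{e'}]} \subseteq (I^{[p^{e+e'}]} : c) \qquad \text{for every } e' \geq 1.
\]
The defect $(I^{[p^{e+e'}]}:c)/I^{[p^{e+e'}]}$ is the kernel of multiplication by $c$ on $R/I^{[p^{e+e'}]}$, so its length equals $\length(R/(cR + I^{[p^{e+e'}]}))$. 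Since $c$ avoids every minimal prime of $R$, we have $\dim R/cR < d$, so property~(1) of Proposition~\ref{prop: HK properties} forces the Hilbert--Kunz function of $I$ on $R/cR$ to be $o(p^{(e+e')d})$. Substituting this into $\length(R/I_e^{[p^{e'}]}) \geq \length(R/I^{[p^{e+e'}]}) - \length(R/(cR + I^{[p^{e+e'}]}))$, dividing by $p^{e'd}$, and letting $e' \to \infty$ yields $\ehk(I_e) \geq p^{ed}\ehk(I)$.

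For stage two, I apply Corollary~\ref{cor: convergence formula} to the $\mf m$-primary ideal $J = I_e$. After the standard reduction to the F-finite reduced setting via the faithfully flat extension $S$ (which preserves both $\ehk$ and colengths), the corollary supplies a uniform bound $|\ehk(I_e) - \length(R/I_e)| \leq \length_S(T/I_e T)$ for a fixed finitely generated $S$-module $T$ with $\dim T < d$. Choosing $N$ with $\mf m^N \subseteq I$ gives $\mf m^{Np^e} \subseteq I^{[p^e]} \subseteq I_e$, so the Hilbert--Samuel polynomial of $T$ bounds $\length_S(T/I_e T) \leq \length_S(T/\mf m^{Np^e} T) = O(p^{e(d-1)})$. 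Feeding stage one into the bound gives $|p^{ed}\ehk(I) - \length(R/I_e)| = O(p^{e(d-1)})$; dividing by $p^{ed}$ and letting $e \to \infty$ produces the claimed limit.

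The main obstacle is stage one, because the formally unmixed hypothesis of Theorem~\ref{thm: tc review}(3) (which would directly yield $\ehk((I^{[p^e]})^*) = \ehk(I^{[p^e]})$) is absent from the statement. The test element is the essential substitute: it converts the tight-closure inclusion into the ideal-theoretic colon inclusion above, and the strict dimension drop upon quotienting out $c$ makes the resulting error asymptotically negligible compared to $p^{(e+e')d}$.
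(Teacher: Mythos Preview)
Your argument is correct, but it takes a substantial detour compared with the paper. The paper's proof is a single sandwich: since $c$ is a test element, $c\,I_e \subseteq c\,(I^{[p^e]})^* \subseteq I^{[p^e]}$, so multiplication by $c$ induces $R/I_e \xrightarrow{\,c\,} R/I^{[p^e]} \to R/(cR+I^{[p^e]}) \to 0$, and together with the trivial bound $\length(R/I_e)\le\length(R/I^{[p^e]})$ one gets
\[
\length(R/I^{[p^e]}) - \length\!\bigl(R/(cR+I^{[p^e]})\bigr) \;\le\; \length(R/I_e) \;\le\; \length(R/I^{[p^e]}).
\]
Because $\dim R/cR < d$, dividing by $p^{ed}$ and letting $e\to\infty$ finishes immediately. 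No intermediate $\ehk(I_e)$, no uniform convergence.

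Notice that your stage~1 is \emph{this same colon trick}, just applied to $I_e^{[p^{e'}]}$ and then sent through the limit $e'\to\infty$. Had you set $e'=0$ there (using the test-element property in the form $c\,J^*\subseteq J$), you would have landed on the paper's inequality for $\length(R/I_e)$ directly, and stage~2 would be unnecessary. So you first introduce an extra limit to reach $\ehk(I_e)=p^{ed}\ehk(I)$, and then invoke the heavy Corollary~\ref{cor: convergence formula} to remove that limit again. What your route does buy is the intermediate identity $\ehk(I_e)=p^{ed}\ehk(I)$ itself, proved without the formally-unmixed hypothesis of Theorem~\ref{thm: tc review}(3); that is a genuine observation, just not needed here.

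One small gap in stage~2: the phrase ``standard reduction to the F-finite reduced setting via the faithfully flat extension $S$'' is not accurate. The extension $S$ in Corollary~\ref{cor: convergence formula} enlarges the residue field but does not kill nilpotents, so it does not force $e_0=0$. Your bound $|\ehk(I_e)-\length(R/I_e)|\le \length_S(T/I_eT)$ therefore needs $R$ reduced, which the lemma does not assume. This is harmless in every application within the paper (test elements are only produced for complete reduced rings), but as stated your stage~2 does not go through when $e_0>0$; the paper's direct argument avoids the issue entirely.
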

\begin{proof}
We tensor the exact sequence $R \xrightarrow{c} R \to R/(c) \to 0$ with $R/I_e$. 
Since $cI_e \subseteq c(\frq{I})^* \subseteq \frq{I}$, it is immediate that the sequence
\[
R/I_e  \xrightarrow{c} R/\frq{I} \to R/(c, \frq{I}) \to 0
\]
is still exact. From the sequence once gets the estimate 
\[
\length (R/I_e) \leq \length (R/\frq{I}) \leq \length (R/I_e) + \length (R/(c, \frq{I})R).
\]
Note that $\dim R/cR < \dim R$ since $c$ is not contained in any minimal prime, so 
the estimate gives that
$0 =  
\lim\limits_{e \to \infty} \frac{1}{p^{e\dim R}} \left ( \length(R/I_e) - \length (R/\frq{I}) \right) $ and the assertion follows.
\end{proof}

\subsection{Equimultiplicity}
As a starting point, recall that Theorem~\ref{thm: Hilbert-Kunz descent} gives a descent formula  for Hilbert--Kunz multiplicity: if $\dim R/\mf p = 1$ and $\dim R_\mf p = \dim R - 1$ then
\[
\lim_{n \to \infty} \frac{\ehk ((\mf p, x^n))}{n} =  \eh (xR/\mf p) \ehk (R_\mf p).
\]
A key observation is that the sequence in the left-hand side is non-increasing.

\begin{lemma}\label{lem: HK filtration}
Suppose that $\dim R/\mf p = 1$ and $x \notin \mf p$. Then 
\[
\frac{\ehk ((\mf p, x^n))}{n}  
\geq \frac{\ehk ((\mf p, x^{n+1}))}{n+1} 
\]
Moreover, the equality holds if and only if 
\[
\ehk ((\mf p, x^n)) = n \lim_{e \to \infty} \frac{\length (R/(x^{p^e}R + \mf p^{[p^e]}:_Rx^{np^e}))}{p^{e\dim R}}.
\]
\end{lemma}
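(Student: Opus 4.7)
The plan is to dissect $\length\bigl(R/(\frpe{\mf p}, x^{np^e})\bigr)$ via the $x^{p^e}$-adic filtration on $A_e := R/\frpe{\mf p}$. First I would introduce, for each $k \geq 0$,
\[
B_k(e) \;:=\; \length\bigl(R/(x^{p^e}R + (\frpe{\mf p} :_R x^{kp^e}))\bigr),
\]
and note that the map $A_e \twoheadrightarrow x^{kp^e}A_e/x^{(k+1)p^e}A_e$ sending $a \mapsto a x^{kp^e}$ has kernel $x^{p^e}A_e + (0:_{A_e} x^{kp^e})$. Telescoping the filtration then gives
\[
\length\bigl(R/(\frpe{\mf p}, x^{np^e})\bigr) \;=\; \sum_{k=0}^{n-1} B_k(e).
\]
The crucial feature is that the colon ideals $(\frpe{\mf p} :_R x^{kp^e})$ grow with $k$, so $B_k(e)$ is \emph{non-increasing} in $k$.

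With this in hand, the rewriting
\[
(n+1)\sum_{k=0}^{n-1} B_k(e) \;-\; n\sum_{k=0}^{n} B_k(e) \;=\; \sum_{k=0}^{n-1}\bigl(B_k(e) - B_n(e)\bigr) \;\geq\; 0
\]
is immediate. Dividing by $n(n+1)p^{e \dim R}$ and sending $e \to \infty$ delivers the first assertion $\ehk((\mf p, x^n))/n \geq \ehk((\mf p, x^{n+1}))/(n+1)$.

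For the equality criterion, I would observe that $B_k(e)$ is itself a telescoping difference,
\[
B_k(e) = \length\bigl(R/(\frpe{\mf p}, x^{(k+1)p^e})\bigr) - \length\bigl(R/(\frpe{\mf p}, x^{kp^e})\bigr),
\]
so $g_k := \lim_{e \to \infty} B_k(e)/p^{e\dim R}$ exists and equals $\ehk((\mf p, x^{k+1})) - \ehk((\mf p, x^k))$ (with the convention $\ehk((\mf p, x^0)) := 0$). The sequence $(g_k)$ inherits non-increasingness from $B_k(e)$, and $\ehk((\mf p, x^n)) = g_0 + \cdots + g_{n-1}$. Equality in the displayed monotonicity therefore amounts to $\sum_{k=0}^{n-1}(g_k - g_n) = 0$; since each summand is nonnegative, this forces $g_k = g_n$ for every $k \leq n$, which in turn is equivalent to $\ehk((\mf p, x^n)) = n g_n$ — precisely the stated identity, once $g_n$ is identified with the limit on its right-hand side.

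The only real obstacle is bookkeeping: one must keep careful track of indices in the filtration and verify that the non-increasing property of $(g_k)$ together with $\sum_{k=0}^{n-1}(g_k - g_n) = 0$ forces termwise equality. Neither point is substantive, and the whole argument is essentially a single monotone rearrangement of the filtration by powers of $x^{p^e}$.
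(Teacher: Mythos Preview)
Your proposal is correct and follows essentially the same approach as the paper: both use the $x^{p^e}$-adic filtration on $R/\frpe{\mf p}$, obtain the decomposition $\length\bigl(R/(\frpe{\mf p},x^{np^e})\bigr)=\sum_{k=0}^{n-1}B_k(e)$ from the short exact sequences $0\to R/(x^{p^e}R+\frpe{\mf p}:_R x^{kp^e})\to R/(\frpe{\mf p},x^{(k+1)p^e})\to R/(\frpe{\mf p},x^{kp^e})\to 0$, and exploit the monotonicity of $B_k(e)$ in $k$. The paper packages the inequality as $\length(R/(I,x^{n+1}))\le\frac{n+1}{n}\length(R/(I,x^n))$ whereas you write out the identity $(n+1)\sum_{k<n}B_k-n\sum_{k\le n}B_k=\sum_{k<n}(B_k-B_n)$, but these are the same computation; your treatment of the equality case, passing to the limits $g_k$ and arguing termwise, is likewise equivalent to the paper's.
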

\begin{proof}
First, take $I$ to be any ideal such that $\dim R/I = 1$ and  the image of $x$ is a parameter in $R/I$. We have the exact sequence 
\begin{equation}\label{eq: filtration sequence}
0 \to R/(xR + I:_R x^k) \to R/(I, x^{k+1}) \to R/(I, x^k) \to 0.
\end{equation}
These exact sequences can be iterated to deduce that 
\begin{equation}\label{eq: filter formula}
\length (R/(I, x^n)) = \length (R/(I, x^{n-1}) + \length (R/(xR + I:_R x^{n-1}))
= \sum_{i = 0}^{n-1} \length (R/(xR + I:_Rx^i)).
\end{equation}
It immediately follows that $\length (R/(I, x^n)) \geq 
n \length (R/(xR + I:_Rx^{n}))$ which allows to deduce from (\ref{eq: filtration sequence})  the estimate
\[
\length (R/(I, x^{n+1})) =  \length (R/(I, x^n)) + \length (R/(xR + I:_Rx^{n}))
\leq \length (R/(I, x^n)) + \frac{1}{n} \length (R/(I, x^n)). 
\]
Plugging $I = \mf p^{[p^e]}$ and replacing $x$ by $x^{p^e}$ yields the estimate
\begin{align*}
\length (R/(\mf p^{[p^e]}, x^{p^e(n+1)}))  &=  \length (R/(\mf p^{[p^e]}, x^{np^e}))  + \length (R/(x^{p^e}R + \mf p^{[p^e]}:_Rx^{np^e}))
\\ &\leq \frac{n+1}{n}  \length (R/(\mf p^{[p^e]}, x^{np^e})) 
\end{align*}
which implies both assertions after dividing by $p^{e\dim R}$ and taking the limits.
\end{proof}

\begin{remark}\label{rmk: multiplicity property}
Before the next proof let me remind some properties of multiplicity. In what follows $S$ is a one-dimensional local ring $S$ and $x$ is a parameter.  
First, set $I = 0$ in (\ref{eq: filter formula}) and note that $\length (S/(xS + 0:_S x^i)) \leq \length (S/xS)$. It is then immediate that 
$\eh(xS) \leq \length (S/xS)$ and that the 
equality holds if and only if $0 :_S x^i = 0$ for all $i$, \emph{i.e.}, $S$ is Cohen-Macaulay. 

In the following proof we will be given $S = R/I$, in which case $\eh (xS) = \length (S/xS)$ if and only if $\mf m$ is not an associated prime of $I$, \emph{i.e.}, $I :_R \mf m^\infty \coloneqq \cup_n I:_R \mf m^n = I$. 
\end{remark}

\begin{theorem}\label{thm: equimultiple property}
Let $(R, \mf m)$ be a formally unmixed local ring of characteristic $p > 0$ and dimension $d$, $c \in $ be a test element, and 
$\mf p$ be a prime ideal such that $\dim R/\mf p = 1$ and $\dim R_\mf p = d - 1$.
If $\ehk (R) = \ehk (R_\mf p)$ then 
$(\mf p^{[p^e]})^* :_R \mf m^{\infty} = (\mf p^{[p^e]})^*$ for all integers $e \geq 0$. 
\end{theorem}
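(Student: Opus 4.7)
The plan is to upgrade the hypothesis $\ehk(R) = \ehk(R_\mf p)$ into the stronger containment $\mf p^{[p^e]} :_R x^{np^e} \subseteq (\mf p^{[p^e]})^*$ for every $n \geq 1$, every $e \geq 0$, and every $x \in \mf m$ with $(\mf p, x)$ being $\mf m$-primary. Once this is in hand the saturation statement follows by a standard test-element manipulation: any $y \in (\mf p^{[p^e]})^* :_R \mf m^\infty$ satisfies $x^N y \in (\mf p^{[p^e]})^*$ for some $N$, so the test-element property from Theorem~\ref{thm: tc review} yields
\[
c y^{p^{e'}} \in \mf p^{[p^{e+e'}]} :_R x^{Np^{e'}} \subseteq \mf p^{[p^{e+e'}]} :_R x^{Np^{e+e'}} \subseteq \bigl(\mf p^{[p^{e+e'}]}\bigr)^*,
\]
and one more application of the test element produces $c^{1+p} y^{p^{\tilde e}} \in \mf p^{[p^{e+\tilde e}]}$ for all $\tilde e \geq 2$, placing $y$ in $(\mf p^{[p^e]})^*$ since $c^{1+p}$ avoids the minimal primes of $R$.

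To prove the colon containment I would first pin down $\ehk((\mf p, x^n))$. Because $R/\mf p$ is a one-dimensional Cohen--Macaulay domain one has $\length(R/(\mf p, x^n)) = n \eh(xR/\mf p)$, so Proposition~\ref{prop: HK properties}(\ref{item: filtration}) gives $\ehk((\mf p, x^n))/n \leq \eh(xR/\mf p) \ehk(R)$. In the opposite direction Lemma~\ref{lem: HK filtration} makes $\ehk((\mf p, x^n))/n$ non-increasing, while Theorem~\ref{thm: Hilbert-Kunz descent} pins its limit at $\eh(xR/\mf p) \ehk(R_\mf p) = \eh(xR/\mf p) \ehk(R)$. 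Both bounds must therefore coincide, giving $\ehk((\mf p, x^n)) = n \eh(xR/\mf p) \ehk(R)$ for every $n \geq 1$. Plugging this into the Frobenius version of the filtration identity (\ref{eq: filter formula}) and differencing the partial sums for $n$ and $n+1$ forces
\[
a_i := \lim_{e \to \infty} \frac{\length(R/(x^{p^e} R + \mf p^{[p^e]} :_R x^{ip^e}))}{p^{ed}} = \eh(xR/\mf p) \ehk(R)
\]
for every $i \geq 0$.

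With the uniform value of $a_i$ in hand I would fix $n \geq 1$, set $J_e = \mf p^{[p^e]} :_R x^{np^e}$, and for each $N \geq 1$ verify the three hypotheses of Lemma~\ref{lemma: filtration in tight closure} for $I = (\mf p, x^N)$ and $L_e := (J_e, x^{Np^e})$. Containment $I^{[p^e]} \subseteq L_e$ is immediate, and Frobenius nesting $L_e^{[p]} \subseteq L_{e+1}$ is clear from $y x^{np^e} \in \mf p^{[p^e]} \Rightarrow y^p x^{np^{e+1}} \in \mf p^{[p^{e+1}]}$. For the crucial third hypothesis I would apply (\ref{eq: filter formula}) to $L_e$ and feed each of the $N$ resulting summands through the uniform estimate above, obtaining $\lim_e \length(R/L_e)/p^{ed} = N \eh(xR/\mf p) \ehk(R) = \ehk(I)$. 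Lemma~\ref{lemma: filtration in tight closure} then places $L_e$, and hence $J_e$, inside $(\mf p^{[p^e]} + x^{Np^e} R)^*$; invoking Lemma~\ref{lemma: tc intersect} along the cofinal family $\{Np^e\}_{N \geq 1}$ in $\NN$ finally gives $J_e \subseteq (\mf p^{[p^e]})^*$.

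The step I expect to be the main obstacle is the uniform-in-$i$ estimate for the numbers $a_i$: Lemma~\ref{lem: HK filtration} in isolation only identifies the equality case at a single index, so extracting the constant value for \emph{every} $i$ requires $\ehk((\mf p, x^n))/n$ to be independent of $n$---and this rigidity is exactly what the hypothesis $\ehk(R) = \ehk(R_\mf p)$ purchases through the interplay of the Lech-type bound and the monotone descent of Theorem~\ref{thm: Hilbert-Kunz descent}.
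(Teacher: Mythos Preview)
Your proof is correct and follows essentially the same strategy as the paper: squeeze $\ehk((\mf p,x^n))/n$ between the Lech-type upper bound and the descent limit of Theorem~\ref{thm: Hilbert-Kunz descent} to force all terms of the filtration to have the same limiting length, then feed the resulting family into Lemma~\ref{lemma: filtration in tight closure}, intersect via Lemma~\ref{lemma: tc intersect}, and finish with a test-element computation. The only cosmetic difference is that the paper keeps $I=(\mf p,x)$ fixed in Lemma~\ref{lemma: filtration in tight closure} and then re-runs the whole argument with $x$ replaced by $x^k$ to produce the family $\{(\mf p^{[p^e]}+x^{kp^e}R)^*\}_k$ to intersect, whereas you vary $N$ directly inside a single application; your route is marginally cleaner and yields the slightly stronger intermediate containment $\mf p^{[p^e]}:_R x^{np^e}\subseteq (\mf p^{[p^e]})^*$ for every finite $n$, not just the saturation.
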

\begin{proof}
Let $x \in \mf m\setminus \mf p$ be an arbitrary element. 
Note that by (\ref{item: filtration}) of Proposition~\ref{prop: HK properties}
\[\ehk ((\mf p, x)) \leq \ehk (R) \length (R/\mf p+ xR) = 
\ehk (R)\eh (xR/\mf p)\] where the last equality holds by Remark~\ref{rmk: multiplicity property}. 
Thus by Lemma~\ref{lem: HK filtration} for any positive integer $n$ there is a bound
\begin{equation}\label{eq: equimultiplicity inequalities}
\ehk (R)\eh (xR/\mf p) \geq \ehk ((\mf p, x)) \geq  \frac{\ehk ((\mf p, x^n))}{n} \geq 
\frac{\ehk ((\mf p, x^{n+1}))}{n+1} \geq
\eh (xR/\mf p) \ehk (R_\mf p).
\end{equation}
Since $\ehk (R) = \ehk (R_\mf p)$ there should be equality throughout, 
so Lemma~\ref{lem: HK filtration} gives that for any $n$ 
\begin{equation}\label{eq: colons HK}
\ehk ((\mf p, x)) = \frac{1}{n}\ehk ((\mf p, x^n)) = \lim_{e \to \infty} \frac{\length (R/(x^{p^e}R + \mf p^{[p^e]}:_Rx^{np^e}))}{p^{e\dim R}}.
\end{equation}

It is then easy to check that for any $n$ the sequence of ideals 
$L_e = x^{p^e}R + \mf p^{[p^e]}:_Rx^{np^e}$ satisfies 
the assumptions of Lemma~\ref{lemma: filtration in tight closure} for $I = (\mf p, x)$. 
Therefore, for all $e$ and $n$
\[
(x^{p^e}R + \mf p^{[p^e]}:_Rx^{np^e}) \subseteq (x^{p^e}R + \mf p^{[p^e]})^*.
\]
After letting $n$ grow, it follows that  $\mf p^{[p^e]}:_R \mf m^{\infty} = \mf p^{[p^e]}:_R x^{\infty} \subseteq (x^{p^e}R + \mf p^{[p^e]})^*$. 
Because $x$ was chosen arbitrarily, we might have taken instead any power $x^k$ in the argument above, so by Lemma~\ref{lemma: tc intersect} for any positive integer $e$ there is a containment
\[
\mf p^{[p^e]}:_R \mf m^{\infty} \subseteq \cap_n (x^{np^e}R + \mf p^{[p^e]})^*
= (\mf p^{[p^e]})^*.
\]

Now I claim that $(\mf p^{[p^e]})^* : x \subseteq (\mf p^{[p^e]})^*$ for any element $x \in \mf m\setminus \mf p$. If not, there is $a \in \mf m$ such that $ax \in (\mf p^{[p^e]})^*$, i.e., 
$ca^{p^{e'}} x^{p^{e'}} \in \mf p^{[p^{e + e'}]}$ for all $e' \gg 0$. 
Hence, 
$ca^{p^{e'}}  \in \mf p^{[p^{e + e'}]} : \mf m^\infty \subseteq 
(\mf p^{[p^{e + e'}]})^*$, hence $c^2 a^{p^{e'}} \subseteq \mf p^{[p^{e + e'}]}$ 
proving that $a \in (\mf p^{[p^e]})^*$. As we may again replace $x$ with any power $x^k$, $(\mf p^{[p^e]})^* : x^k \subseteq (\mf p^{[p^e]})^*$ for all $k$
and the assertion follows.
\end{proof}

\begin{corollary}\label{cor: colength and multiplicity}
Let $(R, \mf m)$ be a formally unmixed local ring of characteristic $p > 0$ and dimension $d$, $c$ be a test element, and 
$\mf p$ be a prime ideal such that $\dim R/\mf p = 1$ and $\dim R_\mf p = d - 1$.
If $\ehk (R) = \ehk (R_\mf p)$ then 
for any $x \in \mf m \setminus \mf p$ we have equality
\[ 
\length (R/((\mf p^{[p^e]})^* + xR))
= \eh (xR/(\mf p^{[p^e]})^*)
= \eh (xR/\mf p) \length (R_\mf p/(\mf p^{[p^e]})^*R_\mf p).
\]
\end{corollary}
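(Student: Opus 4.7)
The plan is to recognize $J := (\mf p^{[p^e]})^*$ as a $\mf p$-primary ideal such that $R/J$ is a one-dimensional Cohen--Macaulay local ring, after which both equalities drop out of standard multiplicity computations.

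The first step is to show $\Ass(R/J) = \{\mf p\}$. Since $\mf p^{[p^e]} \subseteq J$, every associated prime of $R/J$ contains $\mf p$ and is contained in $\mf m$; the hypothesis $\dim R/\mf p = 1$ leaves only $\mf p$ and $\mf m$ as possibilities. Theorem~\ref{thm: equimultiple property} provides $J :_R \mf m^\infty = J$, which rules out $\mf m$ as an associated prime. Hence $\mf p$ is the unique associated prime, $J$ is $\mf p$-primary, and $R/J$ is a one-dimensional local ring without embedded primes.

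Given this, the image of $x \in \mf m \setminus \mf p$ is a parameter in $S := R/J$. Because $S$ is Cohen--Macaulay (a one-dimensional unmixed local ring), Remark~\ref{rmk: multiplicity property} yields
\[
\length(R/(J + xR)) = \length(S/xS) = \eh(xS) = \eh(xR/J),
\]
which is the first equality. For the second, I would apply the associativity formula for Hilbert--Samuel multiplicity to the parameter $x$ in $R/J$: the sum ranges over minimal primes of $R/J$ of maximal dimension, and $\mf p/J$ is the only such prime, so the formula collapses to
\[
\eh(xR/J) = \eh(xR/\mf p) \cdot \length_{R_\mf p}(R_\mf p/JR_\mf p).
\]

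The substantive step is the first one: reading Theorem~\ref{thm: equimultiple property} as the statement that $R/J$ has no embedded primes. Once this is in place, the one-dimensionality of $R/\mf p$ forces $R/J$ to be Cohen--Macaulay and the remaining work is routine. The only subtlety to guard against is the possibility of a prime strictly between $\mf p$ and $\mf m$ appearing as an embedded prime, but the assumption $\dim R/\mf p = 1$ precludes this.
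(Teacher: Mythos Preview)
Your proof is correct and follows essentially the same route as the paper: invoke Theorem~\ref{thm: equimultiple property} to see that $\mf m \notin \Ass(R/(\mf p^{[p^e]})^*)$, conclude that $S = R/(\mf p^{[p^e]})^*$ is a one-dimensional Cohen--Macaulay local ring with unique minimal prime $\mf p S$, and then read off both equalities from Remark~\ref{rmk: multiplicity property} and the associativity formula.
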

\begin{proof}
Because $\mf m$ is not an associated prime of $(\mf p^{[p^e]})^*$ by Theorem~\ref{thm: equimultiple property}, the quotient ring $S = R/(\mf p^{[p^e]})^*$ is a one-dimensional Cohen-Macaulay local ring with the only minimal prime $\mf p S$.
Thus 
Remark~\ref{rmk: multiplicity property} and the associativity formula for the multiplicity  
immediately demonstrate that 
\[\length (S/xS) = \eh (xS) = \eh (xS/\mf p)\length (S_\mf p).
\]
\end{proof}

In the case where $R/\mf p$ is regular, the tight closure condition is not just necessary but is also sufficient.

\begin{corollary}
Let $(R, \mf m)$ be a formally unmixed local ring of characteristic $p > 0$ and dimension $d$, $c$ be a test element, and 
$\mf p$ be a prime ideal such that $R/\mf p$ is a one-dimensional regular ring 
and $\dim R_\mf p = d - 1$.
Then $\ehk (R) = \ehk (R_\mf p)$ if and only if 
$(\mf p^{[p^e]})^* :_R \mf m^{\infty} = (\mf p^{[p^e]})^*$ for all non-negative integers $e$. 
\end{corollary}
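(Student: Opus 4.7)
The forward direction was established as Theorem~\ref{thm: equimultiple property}, so only the converse requires work. My plan is to use the colon hypothesis to show that $R/(\mf p^{[p^e]})^*$ is one-dimensional Cohen--Macaulay, and then to apply the associativity formula for Hilbert--Samuel multiplicity to reduce the computation of $\ehk(R)$ to that of $\ehk(R_\mf p)$.

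Since $R/\mf p$ is a one-dimensional regular local ring, $\mf m/\mf p$ is principal, so I first pick $x \in \mf m \setminus \mf p$ with $\mf m = (\mf p, x)$; in particular $\eh(xR/\mf p) = 1$. The hypothesis $(\mf p^{[p^e]})^* :_R \mf m^\infty = (\mf p^{[p^e]})^*$ excludes $\mf m$ as an associated prime of $(\mf p^{[p^e]})^*$. Since tight closure lies in the radical, $\sqrt{(\mf p^{[p^e]})^*} = \mf p$, and $\dim R/\mf p = 1$ leaves $\mf m$ as the only prime strictly containing $\mf p$; therefore $\mf p$ is the unique associated prime of $(\mf p^{[p^e]})^*$, and $x$ is a nonzerodivisor on $M_e \coloneqq R/(\mf p^{[p^e]})^*$.

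The length computations come next. Iterating the short exact sequences $0 \to M_e/x^{i-1}M_e \xrightarrow{x} M_e/x^i M_e \to M_e/xM_e \to 0$ (exact because $x$ is a nonzerodivisor on $M_e$) gives
\[
\length\bigl(R/((\mf p^{[p^e]})^* + x^{p^e}R)\bigr) = p^e \cdot \length\bigl(R/((\mf p^{[p^e]})^* + xR)\bigr),
\]
while the associativity formula for Hilbert--Samuel multiplicity applied in the Cohen--Macaulay quotient $M_e$ yields
\[
\length\bigl(R/((\mf p^{[p^e]})^* + xR)\bigr) = \eh(xR/\mf p)\,\length\bigl(R_\mf p/(\mf p^{[p^e]})^*R_\mf p\bigr) = \length\bigl(R_\mf p/(\mf p^{[p^e]})^*R_\mf p\bigr).
\]
Since $J_e \coloneqq (\mf p^{[p^e]})^* + x^{p^e}R$ satisfies $\mf m^{[p^e]} \subseteq J_e \subseteq (\mf m^{[p^e]})^*$, Lemma~\ref{lemma: tc limit} delivers $\ehk(R) = \lim_e \length(R/J_e)/p^{ed}$. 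Combining these equalities with the obvious bound $\length(R_\mf p/(\mf p^{[p^e]})^*R_\mf p) \leq \length(R_\mf p/\mf p^{[p^e]}R_\mf p)$ produces
\[
\ehk(R) = \lim_{e \to \infty} \frac{\length(R_\mf p/(\mf p^{[p^e]})^*R_\mf p)}{p^{e(d-1)}} \leq \ehk(R_\mf p),
\]
and the reverse inequality is Corollary~\ref{cor: localization}.

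The main obstacle is extracting Cohen--Macaulayness of $R/(\mf p^{[p^e]})^*$ from the exclusion of the single prime $\mf m$. Regularity of $R/\mf p$ is essential here: it collapses the list of possible embedded primes of $(\mf p^{[p^e]})^*$ to the lone candidate $\mf m$ that the colon condition rules out. Without this assumption, primes strictly between $\mf p$ and $\mf m$ could appear as embedded components, and the associativity step linking $\length(R/((\mf p^{[p^e]})^* + xR))$ to $\length(R_\mf p/(\mf p^{[p^e]})^*R_\mf p)$ would acquire correction terms that the hypothesis does not obviously control.
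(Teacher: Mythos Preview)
Your proof is correct and follows essentially the same route as the paper's: both use the colon hypothesis to make $R/(\mf p^{[p^e]})^*$ one-dimensional Cohen--Macaulay, apply the associativity formula for multiplicity, and invoke Lemma~\ref{lemma: tc limit} for the sequence $J_e = (\mf p^{[p^e]})^* + x^{p^e}R$ to compute $\ehk(\mf m)$. The only cosmetic difference is the final step: the paper asserts $\lim_e \length(R_\mf p/(\mf p^{[p^e]})^* R_\mf p)/p^{e(d-1)} = \ehk(R_\mf p)$ directly (another instance of Lemma~\ref{lemma: tc limit}, this time in $R_\mf p$), whereas you bound this limit above by $\ehk(R_\mf p)$ and close with the localization inequality of Corollary~\ref{cor: localization}.
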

\begin{proof}
The goal is to show the converse to Theorem~\ref{thm: equimultiple property}.
By the assumption, there is an element $x \in R$ such that $\mf m = \mf p + xR$ 
and the image of $x$ is a regular element in all $R/(\mf p^{[p^e]})^*$, 
a one-dimensional Cohen--Macaulay ring. Hence by Lemma~\ref{lemma: tc limit}
\begin{align*}
\ehk (\mf m) &= \lim_{e \to \infty} \frac{\length (R/((\mf p^{[p^e]})^* + x^{p^e}R))}{p^{ed}}
= \lim_{e \to \infty} \frac{\eh (x^{p^e}R/(\mf p^{[p^e]})^*)}{p^{ed}}
\\&= \lim_{e \to \infty} \frac{\eh (x^{p^e}R/\mf p) \length (R_\mf p/(\mf p^{[p^e]})^*R_\mf p)}{p^{ed}} = \lim_{e \to \infty} \frac{\length (R_\mf p/(\mf p^{[p^e]})^*R_\mf p)}{p^{e(d-1)}} = \ehk (R_\mf p).
\end{align*}
\end{proof}

While it looks different, this formula is a positive characteristic analogue
of Theorem~\ref{thm: multiplicity criterion}.
Namely, by the characterization of the analytic spread in terms 
of integral closure given by Burch in \cite{Burch},  Theorem~\ref{thm: multiplicity criterion} asserts that if $R/\mf p$ is a DVR then
 $\eh (R_\mf p) = \eh (R)$ if and only if $\overline{\mf p^n}$ is $\mf p$-primary for all $n$.

\begin{remark}
In the case where $\dim R/\mf p > 1$, the characterization is slightly more complicated. Loosely speaking, if $\mf p$ is an $\ehk$-equimultiple prime then $\mf p^{[p^e]}$ should be ``Cohen-Macaulay up to tight closure'': if $m = \dim R/\mf p$, then there exist elements $x_1, \ldots, x_m$ such that $x_{i + 1}$ is not a zerodivisor modulo $(\mf p^{[p^e]}, x_1^{p^e}, \ldots, x_i^{p^e})^*$ for all $e$ and all $i < m$. 
See \cite{SmirnovEqui}.
\end{remark}

\subsection{Applications}

\subsubsection{Rigidity in weakly F-regular rings}
Theorem~\ref{theorem: F-signature is rigid} shows that F-signature is rigid. 
A similar result holds for Hilbert--Kunz multiplicity provided that tight closure is a trivial operation. 

\begin{definition}
We say that a ring $R$ is weakly F-regular if $I^* = I$ for all ideals $I$. 
\end{definition}

\begin{proposition}[Rigidity]
Let $(R, \mf m)$ be weakly F-regular local ring and $\mf p$ be  a prime ideal
such that $\dim R/\mf p + \dim R_\mf p = \dim R$. 
Then $\ehk (R) = \ehk (R_\mf p)$ if and only if 
$\length (R/\mf m^{[p^e]}) = p^{e \dim R/\mf p} \length (R_\mf p/\mf p^{[p^e]}R_\mf p)$ 
for all $e \geq 1$. 

In particular, if $R$ is a weakly F-regular F-finite domain 
then $\ehk (R) = \ehk (R_\mf p)$ if and only if the numbers of minimal generators of 
$F_*^e R$ and  $F_*^e R_\mf p$ are equal for all $e \geq 1$. 
\end{proposition}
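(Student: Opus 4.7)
My plan: the ``if'' direction is routine. Dividing both sides by $p^{e\dim R}$ and letting $e \to \infty$, the left side converges to $\ehk(R)$ and the right side to $\ehk(R_\mf p)$, because $p^{e\dim R/\mf p}/p^{e\dim R} = 1/p^{e\dim R_\mf p}$ and $\length(R_\mf p/\mf p^{[p^e]}R_\mf p)/p^{e\dim R_\mf p} \to \ehk(R_\mf p)$. Thus equality of the lengths for every $e$ forces $\ehk(R) = \ehk(R_\mf p)$.

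For the ``only if'' direction, I would induct on $m = \dim R/\mf p$, reducing to the base case $m = 1$ by telescoping along a saturated chain. Given $\mf p \subsetneq \mf q \subsetneq \mf m$ with $\dim R/\mf q = m-1$, Corollary~\ref{cor: localization} together with $\ehk(R) = \ehk(R_\mf p)$ forces $\ehk(R) = \ehk(R_\mf q) = \ehk(R_\mf p)$. The inductive hypothesis applied to $(R, \mf q)$ gives $\length(R/\mf m^{[p^e]}) = p^{e(m-1)}\length(R_\mf q/\mf q^{[p^e]}R_\mf q)$, and applying the base case to the weakly F-regular local ring $(R_\mf q, \mf p R_\mf q)$, whose quotient has dimension one, yields $\length(R_\mf q/\mf q^{[p^e]}R_\mf q) = p^{e}\length(R_\mf p/\mf p^{[p^e]}R_\mf p)$. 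Multiplying produces the desired formula, and the ``in particular'' claim then follows by Lemma~\ref{lem: numgen} and Corollary~\ref{cor: Kunz primes}, which translate the length identity into equality of minimal generator counts of $F_*^e R$ and $F_*^e R_\mf p$.

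For the base case $m = 1$, weak F-regularity gives $(\mf p^{[p^e]})^* = \mf p^{[p^e]}$, so Theorem~\ref{thm: equimultiple property} yields $\mf p^{[p^e]}:_R \mf m^\infty = \mf p^{[p^e]}$; hence $A := R/\mf p^{[p^e]}$ is a one-dimensional Cohen--Macaulay local ring with unique minimal prime $\mf p A$, and Corollary~\ref{cor: colength and multiplicity} supplies $\length(R/(\mf p^{[p^e]}+xR)) = \eh(xR/\mf p)\length(R_\mf p/\mf p^{[p^e]}R_\mf p)$ for every $x \in \mf m \setminus \mf p$. I would combine this with the Cohen--Macaulay structure of $A$ (which makes Hilbert--Samuel multiplicities associative along $A \to A_\mf p$, so that $\eh(\mf m_A^{[p^e]};A) = \eh((\mf m/\mf p)^{[p^e]}; R/\mf p) \cdot \length(R_\mf p/\mf p^{[p^e]}R_\mf p)$) and with a minimal reduction argument on $\mf m^{[p^e]}/\mf p^{[p^e]}$ inside $A$ to pin down the exact equality $\length(A/\mf m^{[p^e]}A) = p^e\length(R_\mf p/\mf p^{[p^e]}R_\mf p)$.

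The main obstacle is the base case: the ideal $\mf m^{[p^e]}/\mf p^{[p^e]}$ is not typically principal in $A$, so Corollary~\ref{cor: colength and multiplicity}---which handles only the adjunction of a single element $x$---must be promoted to deal with all generators of $\mf m$ modulo $\mf p$. I expect the tight-closure saturation $\mf p^{[p^e]} :_R \mf m^\infty = \mf p^{[p^e]}$ to ensure that no ``extra length'' appears at $\mf m$ beyond what the localization at $\mf p$ predicts, while the Cohen--Macaulay structure of $A$ converts multiplicity information into length information, and making this uniform in $e$ is the delicate step.
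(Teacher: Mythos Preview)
Your ``if'' direction and the inductive telescoping to reduce to $\dim R/\mf p=1$ match the paper's setup. The genuine gap is exactly where you locate it: the base case. Your proposed route---working inside $A=R/\mf p^{[p^e]}$ and computing $\length(A/\mf m^{[p^e]}A)$ via minimal reductions of $\mf m^{[p^e]}/\mf p^{[p^e]}$---does not lead to a clean argument uniform in $e$, and the paper does not proceed this way.

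The paper's key move is to apply Corollary~\ref{cor: colength and multiplicity} with the element $x^{p^e}$ rather than $x$, so that the left side becomes the colength of a genuine Frobenius power:
\[
\length\bigl(R/(\mf p+xR)^{[p^e]}\bigr)=p^e\,\eh(xR/\mf p)\,\length(R_\mf p/\mf p^{[p^e]}R_\mf p).
\]
Now take a composition series $\mf p+xR=I_0\subsetneq\cdots\subsetneq I_\ell=R$ with $I_{k+1}=(I_k,u_k)$ and $\mf m u_k\subseteq I_k$. Raising to $p^e$-th powers gives
\[
\length\bigl(R/(\mf p+xR)^{[p^e]}\bigr)=\sum_{k=0}^{\ell-1}\length\bigl(R/(I_k^{[p^e]}:u_k^{p^e})\bigr)\leq \ell\cdot\length(R/\mf m^{[p^e]}),
\]
since $\mf m^{[p^e]}\subseteq I_k^{[p^e]}:u_k^{p^e}$. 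Dividing by $p^{e\dim R}$ and passing to the limit, the left side tends to $\ehk(\mf p+xR)=\eh(xR/\mf p)\,\ehk(\mf m)$ and the right side to $\ell\cdot\ehk(\mf m)$; but $\ell=\length(R/(\mf p+xR))=\eh(xR/\mf p)$ because $R/\mf p$ is a one-dimensional domain. Hence each summand has limit $\ehk(\mf m)$, and Lemma~\ref{lemma: filtration in tight closure} applied to $L_e=I_k^{[p^e]}:u_k^{p^e}$ forces $L_e\subseteq(\mf m^{[p^e]})^*=\mf m^{[p^e]}$ by weak F-regularity. Thus $L_e=\mf m^{[p^e]}$ for every $k$ and $e$, the inequality above is an equality, and cancelling $\eh(xR/\mf p)$ against $\ell$ in the two displays yields $\length(R/\mf m^{[p^e]})=p^e\,\length(R_\mf p/\mf p^{[p^e]}R_\mf p)$.

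So the missing ingredient is not a multiplicity computation in $A$, but the squeeze provided by Lemma~\ref{lemma: filtration in tight closure}: the colon ideals $I_k^{[p^e]}:u_k^{p^e}$ are trapped between $\mf m^{[p^e]}$ and its tight closure, and weak F-regularity collapses that interval. This is where the hypothesis is actually used, and it replaces your attempted ``promotion'' of Corollary~\ref{cor: colength and multiplicity} to non-principal ideals.
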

\begin{proof}
Suppose that $\ehk (R) = \ehk (R_\mf p)$ and use induction on $\dim R/\mf p$ 
to reduce to the case where $\dim R/\mf p = 1$. Let $x \in \mf m$ be such that $\mf p + xR$ is $\mf m$-primary. By Corollary~\ref{cor: colength and multiplicity} for all $e \geq 1$
\[
\length (R/(\mf p^{[p^e]} + x^{p^e}R))
= p^e \eh (xR/\mf p) \length (R_\mf p/\mf p^{[p^e]}R_\mf p).
\]
In particular, $\ehk (\mf p + xR) = \eh (xR/\mf p) \ehk (R_\mf p) = \eh (xR/\mf p) \ehk (\mf m)$.

Now, proceed as in the proof of Proposition~\ref{prop: HK properties}
and take a composition series of $R/(\mf p + xR)$: 
\[I_0 = \mf p + xR \subsetneq I_1 \subsetneq \cdots \subsetneq I_\ell = R,\] where $I_{k+1} = (I_k, u_k)$ with $\mf mu_k \subseteq I_k$. 
Raising to Frobenius powers  gives a filtration  $\mf p^{[p^e]} + x^{p^e}R \subsetneq I_1^{[p^e]} \subsetneq \cdots$ which computes that
\[
\length (R/(\mf p^{[p^e]} + x^{p^e}R)) = \sum_{k = 0}^{\ell - 1} \length (R/I_k^{[p^e]} : u_k^{p^e}) \leq \length (R/(\mf p + xR)) \length (R/\mf m^{[p^e]}). 
\]
Since the limits of the left-hand side and the right-hand side agree, 
we must have that
\[
\lim_{e \to \infty} \frac{\length (R/I_k^{[p^e]} : u_k^{p^e})}{p^{e \dim R}}
= \ehk (\mf m) \text { for all } 0 \leq k \leq \ell - 1.
\]
By Lemma~\ref{lemma: filtration in tight closure} 
$\mf m^{[p^e]} \subseteq I_k^{[p^e]} : u_k^{p^e} \subseteq (\mf m^{[p^e]})^* = \mf m^{[p^e]}$, hence the equality holds and it follows that 
\[
p^e \eh (xR/\mf p) \length (R_\mf p/\mf p^{[p^e]}R_\mf p) = 
\length (R/(\mf p^{[p^e]} + x^{p^e}R)) =  \length (R/(\mf p + xR)) \length (R/\mf m^{[p^e]})
\]
and one direction follows. The converse is clear. 
\end{proof}

\subsubsection{Hilbert--Kunz multiplicity and singularity}

I will now present a new proof of  the celebrated theorem of Watanabe and Yoshida
which opened the use of  Hilbert--Kunz theory in the study of singularities.

\begin{theorem}[Watanabe--Yoshida]
Let $(R, \mf m)$ be a local ring of characteristic $p > 0$. 
Then $R$ is regular if and only if $\ehk (R) = 1$ and $R$ is  formally unmixed (i.e., $\Ass \widehat{R} = \Minh (\widehat{R})$).
\end{theorem}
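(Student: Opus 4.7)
The easy direction follows from Theorem~\ref{theorem Kunz}: if $R$ is regular, $F_*^e R$ is free of the generic rank, giving $\length(R/\mf m^{[p^e]}) = p^{ed}$ and $\ehk(R) = 1$; moreover $\widehat{R}$ is regular, hence a domain, hence formally unmixed. For the converse, my plan is to reduce to the case where $R$ is an F-finite, complete, reduced domain with a test element, then induct on $d = \dim R$. Completion preserves $\ehk$ and formal unmixedness; the associativity formula Proposition~\ref{prop: HK properties}(\ref{item: associative}) gives $\ehk(R) = \sum_{\mf q \in \Min R} \length(R_\mf q)\, \ehk(R/\mf q)$, and since each $R/\mf q$ is a formally unmixed domain with $\ehk \geq 1$, equality $\ehk(R) = 1$ forces a unique minimal prime $\mf q$ with $\length(R_\mf q) = 1$ and $\ehk(R/\mf q) = 1$, so $R$ itself is a domain (in particular reduced). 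F-finiteness can be arranged via a Hochster--Huneke $\Gamma$-construction preserving the hypotheses, and Theorem~\ref{thm: tc review} then supplies a test element.

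For the induction, the base case $d = 0$ is immediate ($\ehk(R) = \length(R) = 1$ forces $R$ to be a field). In the inductive step, Corollary~\ref{cor: localization} together with the inductive hypothesis shows that $R_\mf p$ is regular for every $\mf p \subsetneq \mf m$, so $R$ has an isolated singularity. After passing to an infinite residue field (which preserves the hypotheses), a generic-hyperplane argument produces a prime $\mf p$ with $\dim R/\mf p = 1$, $R/\mf p$ a DVR, and $\dim R_\mf p = d - 1$; write $\mf m = \mf p + xR$ where $x$ lifts a uniformizer of $R/\mf p$. Since $\ehk(R_\mf p) = 1 = \ehk(R)$, Theorem~\ref{thm: equimultiple property} yields that $(\mf p^{[p^e]})^*$ is $\mf p$-primary for every $e \geq 0$. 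Applying Corollary~\ref{cor: colength and multiplicity} with $x^{p^e}$ in place of $x$, and using that $\eh(x^{p^e} R/\mf p) = p^e$ while $(\mf p^{[p^e]})^* R_\mf p = \mf p^{[p^e]} R_\mf p$ by the regularity of $R_\mf p$, one computes
\[
\length\bigl(R/((\mf p^{[p^e]})^* + x^{p^e}R)\bigr) = p^e \cdot p^{e(d-1)} = p^{ed}.
\]
Since $\mf m^{[p^e]} = \mf p^{[p^e]} + x^{p^e} R$ is contained in $(\mf p^{[p^e]})^* + x^{p^e} R$, this recovers Kunz's inequality $\length(R/\mf m^{[p^e]}) \geq p^{ed}$.

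The main obstacle is then to promote this lower bound to an equality for some $e_0$, since a single equality $\length(R/\mf m^{[p^{e_0}]}) = p^{e_0 d}$ forces $F_*^{e_0} R$ to be free and hence $R$ to be regular by Theorem~\ref{theorem Kunz}. Setting $M_e = (\mf p^{[p^e]})^*/\mf p^{[p^e]}$ — a finite-length $R$-module supported at $\mf m$, since $R$ has an isolated singularity — a direct computation exploiting that $x^{p^e}$ is a non-zerodivisor on $R/(\mf p^{[p^e]})^*$ gives
\[
\length(R/\mf m^{[p^e]}) - p^{ed} = \length(M_e/x^{p^e} M_e),
\]
and the assumption $\ehk(R) = 1$ forces $\length(M_e/x^{p^e} M_e) = o(p^{ed})$. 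The remaining task is to combine this asymptotic decay with the Frobenius interrelation between consecutive $M_e$ and a Nakayama-style argument on the $\mf m$-supported modules $M_e$ to conclude $M_{e_0} = 0$ for some $e_0$, equivalently that $\mf p^{[p^{e_0}]}$ has no $\mf m$-embedded component. Extracting this exact vanishing from the asymptotic estimate is the subtlest part of the argument, and is precisely where Huneke and Yao's use of symbolic powers is replaced by the tight-closure ideal $(\mf p^{[p^{e_0}]})^*$ whose $\mf p$-primariness comes from equimultiplicity.
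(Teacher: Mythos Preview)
Your reductions and the computation $\length(R/L_e) = p^{ed}$ for $L_e = (\mf p^{[p^e]})^* + x^{p^e}R$ are correct and match the paper. The genuine gap is the last paragraph: you try to deduce $\length(R/\mf m^{[p^{e_0}]}) = p^{e_0 d}$ for some $e_0$ by showing $M_{e_0} = (\mf p^{[p^{e_0}]})^*/\mf p^{[p^{e_0}]}$ vanishes. Your identity $\length(R/\mf m^{[p^e]}) - p^{ed} = \length(M_e/x^{p^e} M_e)$ is fine, but the promised ``Nakayama-style'' step is only asserted. I do not see how to extract $M_{e_0} = 0$ from the asymptotic $\length(M_e/x^{p^e}M_e) = o(p^{ed})$: the modules $M_e$ vary with $e$, their Loewy lengths may grow, and there is no evident Frobenius relation forcing any single one to vanish. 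In fact $M_e = 0$ is equivalent to $\mf p^{[p^e]}$ being $\mf p$-primary, which is essentially the regularity statement you are trying to prove.

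The paper avoids this entirely via Lemma~\ref{lem: WY condition}. You already have $\length(R/L_e) = p^{ed}$; the missing observation is that also $\ehk(L_e) = p^{ed}$, because $\mf m^{[p^e]} \subseteq L_e \subseteq (\mf m^{[p^e]})^*$ forces $\ehk(L_e) = p^{ed}\ehk(\mf m) = p^{ed}$. Hence $\ehk(L_e) = \length(R/L_e)$, and one only needs $L_e \subseteq \mf m^{[p]}$ for some large $e$; this follows from Chevalley's lemma (or from $L_e \subseteq \overline{(\mf p,x)^{p^e}}$ together with a Brian\c{c}on--Skoda-type bound in the complete domain $R$). Lemma~\ref{lem: WY condition} then finishes immediately. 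Note also that the extra reduction to $R/\mf p$ a DVR is unnecessary: the paper keeps the factor $\eh(xR/\mf p)$ throughout and works with any prime $\mf p$ such that $\dim R/\mf p = 1$ and $\dim R_\mf p = d-1$.
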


It seems that all proofs of this statement pass through the following crucial lemma
which reduces the assertion to the Kunz theorem.

\begin{lemma}[Watanabe--Yoshida]\label{lem: WY condition}
Let $(R, \mf m)$ be a local ring such that $\ehk (R) = 1$. 
Suppose that there exists an ideal $I \subseteq \mf m^{[p]}$
such that $\ehk (I) \geq \length (R/I)$. 
Then $R$ is regular.
\end{lemma}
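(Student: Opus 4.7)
The plan is to combine the two Lech-type inequalities of Proposition~\ref{prop: HK properties} with Kunz's classical regularity criterion, which asserts that in any local ring $(R,\mf m)$ of characteristic $p$ and dimension $d$ one has $\length(R/\mf m^{[p]}) \geq p^d$, with equality if and only if $R$ is regular. The entire argument is then a two-line inequality chase comparing two upper bounds on $\ehk(I)$.

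First, the Lech-type inequality of Proposition~\ref{prop: HK properties}(\ref{item: filtration}), combined with $\ehk(\mf m) = 1$, reads
\[
\ehk(I) \leq \length(R/I)\,\ehk(\mf m) = \length(R/I),
\]
so the hypothesis $\ehk(I) \geq \length(R/I)$ forces the equality $\ehk(I) = \length(R/I)$.

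Next, I would apply the refined bound of Proposition~\ref{prop: HK properties}(\ref{item: filtration 2}) to the sandwich $I \subseteq \mf m^{[p]} \subsetneq R$, using both $\ehk(\mf m) = 1$ and the Frobenius-power identity $\ehk(\mf m^{[p]}) = p^d \ehk(\mf m) = p^d$ listed as item $(2)$ of the same proposition, to obtain
\[
\ehk(I) \leq \length(\mf m^{[p]}/I)\,\ehk(\mf m) + \ehk(\mf m^{[p]}) = \length(\mf m^{[p]}/I) + p^d.
\]
Additivity of length in $0 \to \mf m^{[p]}/I \to R/I \to R/\mf m^{[p]} \to 0$ gives $\length(R/I) = \length(R/\mf m^{[p]}) + \length(\mf m^{[p]}/I)$. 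Substituting this together with $\ehk(I) = \length(R/I)$ from the first step collapses the inequality above to
\[
\length(R/\mf m^{[p]}) \leq p^d.
\]

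Finally, Kunz's inequality supplies the reverse bound $\length(R/\mf m^{[p]}) \geq p^d$, so the two agree and Kunz's regularity criterion identifies $R$ as regular. No single step is a genuine technical obstacle; the substantive external input is Kunz's criterion itself, and the only real idea is to exploit the inclusion $I \subseteq \mf m^{[p]}$ by splitting the chain $I \subseteq R$ at $\mf m^{[p]}$ and comparing the two resulting Lech-type estimates---the slack this comparison produces is exactly what Kunz's theorem turns into regularity.
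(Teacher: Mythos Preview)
Your proof is correct and follows essentially the same route as the paper: apply Proposition~\ref{prop: HK properties}(\ref{item: filtration 2}) with $J = \mf m^{[p]}$, use additivity of length, and conclude via Kunz's criterion that $\length(R/\mf m^{[p]}) \leq p^d$ forces regularity. Your first step establishing the \emph{equality} $\ehk(I) = \length(R/I)$ is harmless but unnecessary---the paper simply chains the hypothesis $\ehk(I) \geq \length(R/I)$ directly into the inequality from (\ref{item: filtration 2}) without ever invoking (\ref{item: filtration}).
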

\begin{proof}
By (\ref{item: filtration 2}) of Proposition~\ref{prop: HK properties},
$\ehk (I) \leq \length (\mf m^{[p]}/I) \ehk (R) + \ehk (\mf m^{[p]})
= \length (\mf m^{[p]}/I)  + p^d$. 
Therefore, 
\[
 \length (\mf m^{[p]}/I)  + p^d \geq \ehk (I) \geq \length (R/I) = \length (\mf m^{[p]}/I) + \length (R/\mf m^{[p]}) 
\]
and we obtain that $p^d \geq \length (R/\mf m^{[p]})$. This implies that $R$ is regular by Theorem~\ref{theorem Kunz}.
\end{proof}

Thus the Watanabe--Yoshida theorem amounts to finding an ideal $I$ fitting the assumptions of the lemma. There are several possible options. The original proof of Watanabe and Yoshida
uses a \emph{parameter} ideal, but the inequality $\ehk (I) \geq \length (R/I)$ requires $R$ to be Cohen-Macaulay and this is hard to show.
Second, in \cite{MQSLength2} we showed that $\ehk (I) \geq \length (R/I)$
holds for any \emph{integrally closed} ideal, so one can take for example 
$I = \overline{\mf m^n}$ for any $n \gg 0$. 
The third construction, due to Huneke and Yao in \cite{HunekeYao}, is the simplest of all proofs that I know. In this case,  $I = \mf p^{(n)} + xR$ where 
$\mf p$ is a prime such that $\dim R/\mf p = 1$ and $R_\mf p$ is regular, $x \in \mf m^{[p]} \setminus \mf p$, and $n$ is taken sufficiently large for 
$I$ to be contained in $\mf m^{[p]}$, such $n$ exists by Chevalley's lemma. Such primes exist:  if $\mf p$ is such that $\dim R/\mf p = 1$ and $\dim R_\mf p = \dim R - 1$, then a) $\ehk (R_\mf p) = \ehk (R)$ by the localization property, b) $R_\mf p$ is regular by induction on $\dim R$.

The proof below also starts with a prime $\mf p$ with the above properties, but it uses $\ehk$-equimultiplicity to construct the ideal $I$. 
This approach conceptualizes the construction of Huneke and Yao,  although it requires a lot more work to build the equimultiplicity machinery.

Last, I want to note that if $R$ is strongly F-regular, then one can avoid the Watanabe--Yoshida lemma and modify the proof of Theorem~\ref{theorem signature 1 iff regular} to get an easy proof, see \cite{PolstraSmirnov}. However, there does not seem to be an easy way for showing that a complete domain with $\ehk (R) = 1$
is strongly F-regular.

\begin{proof}[The proof of the Watanabe--Yoshida theorem]
If $R$ is regular, then $\ehk (R) = 1$ by Theorem~\ref{theorem Kunz}, the main direction  is the converse. The proof is by induction on $\dim R$. The base case $\dim R = 0$ is trivial 
since $1 = \ehk (R) = \length (R)$, so $R$ is a field. 

It is enough to assume that $R$ is complete, as the assumptions pass to the completion. It also suffices to assume that $R$ is reduced.
Namely,  $\ehk (R) \geq \ehk (\red{R}) \geq 1$, so $\ehk (\red{R}) = 1$. 
Now, if $\red{R}$ is regular, then $\mf p = \sqrt{0}$ is prime and, by the associativity formula, 
\[1 = \ehk (R) = \ehk (R/\mf p) \length (R_\mf p) = \length(R_\mf p).\] 
Hence, $R_\mf p$ is a field. Because $\Ass (R)= \{\mf p\}$,  $R\setminus \mf p$ consists of non zerodivisors, so $\mf p = 0$ and $R = \red{R}$ is regular. 

It is known by \cite{HHSmooth} that a complete reduced ring has a test element. 
As explained before the proof, there is a prime ideal $\mf p$ such that 
$\dim R/\mf p = 1$ and $\ehk (R_\mf p) = 1$, so $R_\mf p$ is regular by induction.

\begin{claim}
For $L_e = (\mf p^{[p^e]})^* + x^{p^e}R$ we have 
$\length (R/L_e) = \ehk (L_e)$.
\end{claim}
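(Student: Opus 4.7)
My plan is to compute both sides of the asserted identity explicitly and observe that both evaluate to $p^{ed}\,\eh(xR/\mf p)$. For the colength, note that $x \notin \mf p$ gives $x^{p^e} \in \mf m \setminus \mf p$, so I may apply Corollary~\ref{cor: colength and multiplicity} with $x^{p^e}$ in the role of $x$. Using the elementary scaling $\eh(y^n) = n\,\eh(y)$ for a parameter in a one-dimensional local ring, this yields
\[
\length(R/L_e) = p^e\,\eh(xR/\mf p)\,\length(R_\mf p/(\mf p^{[p^e]})^*R_\mf p).
\]
Since $R_\mf p$ is regular of dimension $d-1$ by the induction hypothesis, every ideal of $R_\mf p$ is tightly closed; combined with the elementary inclusion $I^*R_\mf p \subseteq (IR_\mf p)^*$ this forces $(\mf p^{[p^e]})^* R_\mf p = \mf p^{[p^e]} R_\mf p$, whose colength is $p^{e(d-1)}$ by Theorem~\ref{theorem Kunz}. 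Altogether, $\length(R/L_e) = p^{ed}\,\eh(xR/\mf p)$.

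For $\ehk(L_e)$, I would invoke Lemma~\ref{lemma: tc limit}, which computes the limit along any ideal sequence sandwiched between $L_e^{[p^{e'}]}$ and $(L_e^{[p^{e'}]})^*$. The natural choice is $I_{e'} = L_{e+e'}$. Since Frobenius distributes over sums, $L_e^{[p^{e'}]} = ((\mf p^{[p^e]})^*)^{[p^{e'}]} + x^{p^{e+e'}}R$, so the inclusion $L_e^{[p^{e'}]} \subseteq L_{e+e'}$ reduces to $((\mf p^{[p^e]})^*)^{[p^{e'}]} \subseteq (\mf p^{[p^{e+e'}]})^*$, which is immediate from the test-element characterization of tight closure in Theorem~\ref{thm: tc review}. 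Conversely, $L_{e+e'} \subseteq (L_e^{[p^{e'}]})^*$ follows from monotonicity of tight closure together with $\mf p^{[p^{e+e'}]} \subseteq L_e^{[p^{e'}]}$ and $x^{p^{e+e'}} R \subseteq L_e^{[p^{e'}]}$.

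With the sandwich in place, Lemma~\ref{lemma: tc limit} together with the colength formula (now applied at index $e+e'$ in place of $e$) gives
\[
\ehk(L_e) = \lim_{e' \to \infty} \frac{\length(R/L_{e+e'})}{p^{e'd}} = \lim_{e' \to \infty} \frac{p^{(e+e')d}\,\eh(xR/\mf p)}{p^{e'd}} = p^{ed}\,\eh(xR/\mf p) = \length(R/L_e),
\]
which proves the claim. The only nontrivial move is spotting the right sandwich sequence; after that, everything is formal manipulation of test elements, Frobenius powers, and the trivial localization of tight closure to the regular ring $R_\mf p$. The real conceptual content is supplied by Theorem~\ref{thm: equimultiple property}, which is what made Corollary~\ref{cor: colength and multiplicity} applicable in the first place. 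The payoff is that $L_e$ then satisfies $L_e \subseteq \mf m^{[p]}$ for suitable $e$ and $\ehk(L_e) = \length(R/L_e)$, putting it in position to be the ideal $I$ required by Lemma~\ref{lem: WY condition}.
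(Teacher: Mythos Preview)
Your proof is correct. The computation of $\length(R/L_e)$ is identical to the paper's: both apply Corollary~\ref{cor: colength and multiplicity} with $x^{p^e}$ in place of $x$, use that tight closure is trivial in the regular ring $R_{\mf p}$, and arrive at $p^{ed}\,\eh(xR/\mf p)$.

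The treatment of $\ehk(L_e)$ differs. The paper observes that $(\mf p + xR)^{[p^e]} \subseteq L_e \subseteq \big((\mf p + xR)^{[p^e]}\big)^*$, so by Theorem~\ref{thm: tc review}(3) one has $\ehk(L_e) = p^{ed}\,\ehk(\mf p + xR)$, and then invokes the equality case of the chain~(\ref{eq: equimultiplicity inequalities}) inside the proof of Theorem~\ref{thm: equimultiple property} to get $\ehk(\mf p + xR) = \eh(xR/\mf p)\,\ehk(R_{\mf p}) = \eh(xR/\mf p)$. Your route instead bootstraps: having already established the colength formula for \emph{every} index $e$, you sandwich $L_e^{[p^{e'}]} \subseteq L_{e+e'} \subseteq (L_e^{[p^{e'}]})^*$ and feed the sequence $L_{e+e'}$ into Lemma~\ref{lemma: tc limit}, so that $\ehk(L_e)$ becomes a limit of known colengths. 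This is slightly more self-contained, since it does not revisit the inequalities~(\ref{eq: equimultiplicity inequalities}); the paper's route is a touch shorter and more transparent about \emph{why} $\ehk(L_e)$ coincides with $p^{ed}\,\ehk(\mf p + xR)$. Both approaches rest on the same tight-closure and test-element machinery, and your verification of the two sandwich inclusions is accurate.
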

\begin{proof}
First, by Corollary~\ref{cor: colength and multiplicity}
\[
\length (R/((\mf p^{[p^e]})^* + x^{p^e}R)) = \eh (x^{p^e}R/(\mf p^{[p^e]})^*)
= p^e\eh (xR/\mf p) \length (R_\mf p/(\mf p^{[p^e]})^*R_\mf p).
\]
Since $R_\mf p$ is regular, we compute  $\length (R_\mf p/(\mf p^{[p^e]})^*R_\mf p) =
\length (R_\mf p/\mf p^{[p^e]}R_\mf p) = p^{e\dim R_\mf p}$.
Therefore  
\[
\length (R/((\mf p^{[p^e]})^* + x^{p^e}R)) = p^{e\dim R} \eh (xR/\mf p).
\]

Second, $\ehk ((\mf p^{[p^e]})^* + x^{p^e}R) = \ehk ((\mf p+ xR)^{[p^e]}) = p^{e\dim R} \ehk (\mf p + xR)$, because $(\mf p^{[p^e]})^* + x^{p^e}R$ is contained in the tight closure of $\frpe{(\mf p + xR)}$. 
But then the inequalities (\ref{eq: equimultiplicity inequalities}) in the proof of Theorem~\ref{thm: equimultiple property} show that 
$\ehk (\mf p + xR)) = \ehk (xR/\mf p)\ehk(R_\mf p) = \ehk (xR/\mf p)$ and the claim follows.
\end{proof}

It remains to show that $L_e \subseteq \mf m^{[p]}$ for a large $e$.
This follows from Chevalley's lemma or from the properties of integral closure: there a constant $c$ such that  $L_e \subseteq \overline{(\mf p, x)^e} \subseteq (\mf p, x)^{e - c}$  because $R$ is a complete domain, see \cite[Proposition~5.3.4]{HunekeSwanson}. The assertion now follows from Lemma~\ref{lem: WY condition}.
\end{proof}

\subsubsection{An application to semicontinuity.}

The study of semicontinuity in the theory of F-invariants originates 
from \cite{Kunz2} and \cite{EnescuShimomoto}. It is known by \cite{SmirnovSemi, Lyu} that Hilbert--Kunz multiplicity is an upper semicontinuous function on the spectrum of an excellent ring, see also \cite{Polstra, PolstraTucker} for F-signature, and \cite{SPYGlobal} for Frobenius Betti numbers. However, it was originally believed that Hilbert--Kunz multiplicity would be strong upper semicontinuous just as multiplicity. I will now present an argument from \cite{SmirnovEqui} that shows that this is not the case. The counter-example is based on Monsky's computation of the Hilbert--Kunz multiplicity of a family of quartics in $3$ variables in \cite{MonskyQP}.

\begin{theorem}[Monsky, [\cite{MonskyQP}]]\label{thm: Monsky computation}
Let $K$ be an algebraically closed field of characteristic $2$. 
For $\alpha \in K$ let $Q_\alpha = K[[x,y,z]]/(z^4 +xyz^2 + (x^3 + y^3)z + \alpha x^2y^2)$.
Then
\begin{enumerate}
\item $\ehk (Q_\alpha) = 3 + \frac 12$, if $\alpha = 0$,
\item $\ehk (Q_\alpha) = 3 + 4^{-m}$, 
if $\alpha \neq 0$ is algebraic over $\mathbb Z/2\mathbb Z$, where 
$m = [\mathbb Z/2\mathbb Z(\lambda): \mathbb Z/2\mathbb Z]$ for $\lambda$ such that $\alpha = \lambda^2 + \lambda$
\item $\ehk (Q_\alpha) = 3$ if $\alpha$ is transcendental over $\mathbb Z/2\mathbb Z$.
\end{enumerate} 
\end{theorem}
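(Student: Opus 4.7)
The plan is to compute the relevant lengths directly. Since $\dim Q_\alpha = 2$ we have
\[
\ehk(Q_\alpha) = \lim_{e\to\infty} \frac{\length(Q_\alpha/\mathfrak{m}^{[2^e]})}{4^e}.
\]
Writing $f = z^4 + xyz^2 + (x^3+y^3)z + \alpha x^2y^2$ and $S_e = K[[x,y,z]]/(x^{2^e}, y^{2^e}, z^{2^e})$ (a ring of length $8^e$), the four-term exact sequence
\[
0 \to (0 :_{S_e} f) \to S_e \xrightarrow{\cdot f} S_e \to S_e/fS_e \to 0
\]
gives $\length(Q_\alpha/\mathfrak{m}^{[2^e]}) = \length(S_e/fS_e) = \length(0 :_{S_e} f)$, so the whole computation reduces to understanding the annihilator of $f$ inside $S_e$.

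My next step would be to exploit characteristic $2$: write each element of $S_e$ uniquely as $\sum_{i=0}^{2^e-1} a_i(x,y) z^i$ with coefficients in $A_e := K[[x,y]]/(x^{2^e}, y^{2^e})$. Multiplication by $f$ then becomes an $A_e$-linear endomorphism of $A_e^{2^e}$ whose matrix is entirely determined by the four coefficients $1$, $xy$, $x^3 + y^3$, $\alpha x^2 y^2$. Solving $fu = 0$ row by row reduces to a sequence of small linear systems over $A_e$, and Frobenius additivity repeatedly converts these into Artin--Schreier equations $t^2 + t + \alpha = 0$, $t^2 + t + (\alpha^2 + \alpha) = 0$, and so on down the iteration $\alpha \mapsto \alpha^2 + \alpha$.

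The main obstacle, and the entire content of \cite{MonskyQP}, is to organize this recursion so that $\length(0 :_{S_e} f)$ admits a tractable closed form. Monsky's key discovery is that the answer depends only on the orbit of $\alpha$ under Artin--Schreier iteration, and this produces the trichotomy in the theorem. For $\alpha = 0$ the iteration is identically $0$ and contributes a constant $1/2$ to $\ehk - 3$; for algebraic $\alpha \neq 0$ with $\alpha = \lambda^2 + \lambda$ and $m = [\mathbb{Z}/2\mathbb{Z}(\lambda):\mathbb{Z}/2\mathbb{Z}]$, the iteration becomes periodic with period $m$ and contributes a geometric tail summing to $4^{-m}$; for transcendental $\alpha$ the orbit in the algebraic closure of $\mathbb{Z}/2\mathbb{Z}(\alpha)$ never closes up and the correction decays faster than any $4^{-N}$, so $\ehk(Q_\alpha) = 3$ in the limit. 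I would not attempt to rederive the syzygy-gap bookkeeping that makes each of these claims precise --- it lies outside the equimultiplicity framework developed in this note and is properly imported from \cite{MonskyQP}.
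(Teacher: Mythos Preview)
Your treatment of cases (2) and (3) matches the paper's: both simply cite \cite{MonskyQP}. Your additional exposition of the Artin--Schreier iteration is a reasonable summary of what Monsky does, though it adds nothing logically.

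The gap is case (1). When $\alpha = 0$ the quartic $f$ is \emph{reducible}: it is divisible by $z$, so $Q_0$ is not a domain and Monsky's syzygy-gap analysis in \cite{MonskyQP} does not apply. Your heuristic that ``the iteration is identically $0$ and contributes a constant $1/2$'' is not justified by anything in \cite{MonskyQP}; indeed, if one naively plugged $\alpha = 0$ into the formula of case (2) (taking $\lambda \in \{0,1\} \subset \mathbb{Z}/2\mathbb{Z}$, so $m = 1$) one would get $3 + 1/4$, not $3 + 1/2$. So the $\alpha = 0$ value genuinely falls outside Monsky's framework and requires a separate argument.

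The paper handles this case by an elementary direct computation that you missed: over a field of characteristic $2$ one factors
\[
z^4 + xyz^2 + (x^3 + y^3)z = z(x+y+z)\bigl((x+y+z)^2 + zy\bigr),
\]
and then the associativity formula (Proposition~\ref{prop: HK properties}(\ref{item: associative})) gives $\ehk(Q_0)$ as a sum over the three minimal primes. Two of the factors are linear (contributing $1$ each) and the third is an $A_1$-singularity with $\ehk = 3/2$ by \cite[Theorem~5.4]{WatanabeYoshida}, yielding $3 + 1/2$. This is both shorter and logically independent of \cite{MonskyQP}.
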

\begin{proof}
The last two cases are computed by Monsky in \cite{MonskyQP}.
In the first case there is a factorization 
\[
z^4 +xyz^2 + (x^3 + y^3)z = z (x + y + z) (x^2 + y^2 + z^2 + zx + zy + xy)
= z(x + y + z) ((x + y + z)^2 + zy).
\]
Hence, by the associativity formula
\begin{align*}
\ehk(Q_0) &= \ehk (K[x,y]) + \ehk(K[x,y,z]/(x + y + z)) + \ehk(K[x, y, z]/((x + y + z)^2 + zy)) \\
&= 2 + \ehk(K[x, y, z]/(x^2 + zy)) = 3 \frac{1}{2},
\end{align*}
as the last equation, $(A_1)$-singularity, was computed in 
\cite[Theorem~5.4]{WatanabeYoshida}.
\end{proof}

Building on this computation, in \cite{BrennerMonsky}
Brenner and Monsky showed that tight closure does not commute 
with localization on the hypersurface\footnote{
Monsky showed that quartics split into two families and studied their Hilbert--Kunz multiplicities in 
\cite{MonskyQL, MonskyQP}. The hypersurface of Brenner and Monsky parametrizes the family appearing in \cite{MonskyQP}. Yet, a recent work \cite{Simpson} establishes the failure of localization in the second family \cite{MonskyQL}.}
\[R = F[x, y, z, t]/(z^4 +xyz^2 + (x^3 + y^3)z + tx^2y^2),\] 
where $F$ is an algebraic closure of $\mathbb Z/2\mathbb Z$. 
By the Jacobian criterion, the singular locus of this hypersurface is $V((x,y,z))$ 
and $\mf p = (x,y, z)$ is prime, in fact $R/\mf p \cong F[t]$.

\begin{proposition}\label{HK counterexample}
Let $F$ be the algebraic closure of $\mathbb Z/2\mathbb Z$, 
$R = F[x, y, z, t]/(z^4 +xyz^2 + (x^3 + y^3)z + tx^2y^2)$, and $\mf p = (x,y ,z)$, a prime ideal of $R$.
Then  $\ehk (\mf p) = 3$, but $\ehk(\mf m) > 3$ for any maximal ideal $\mf m$ containing $\mf p$.
\end{proposition}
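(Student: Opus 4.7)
The plan is to handle the two assertions separately. For $\ehk(\mf p) = 3$, I will reduce, via a residue-field extension, to Monsky's computation. For the strict inequality $\ehk(\mf m) > 3$, I will argue by contradiction, combining Theorem~\ref{thm: equimultiple property} with Brenner and Monsky's failure of tight closure to commute with localization on this very hypersurface \cite{BrennerMonsky}.

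For $\ehk(\mf p) = 3$, note that $R_\mf p$ is a two-dimensional excellent local domain whose residue field is $\operatorname{Frac}(R/\mf p) = F(t)$, and by Cohen's structure theorem its completion is $F(t)[[x, y, z]]/(f)$, where $f$ is the defining quartic. Since Hilbert--Kunz multiplicity is invariant under faithfully flat extensions whose closed fibers are fields (as exploited in the proof of Corollary~\ref{cor: convergence formula}), I base change to $K = \overline{F(t)}$ and obtain $K[[x, y, z]]/(f)$. This is precisely Monsky's ring $Q_\tau$ with $\tau = t \in K$ transcendental over $\mathbb{F}_2$, and case~(3) of Theorem~\ref{thm: Monsky computation} yields $\ehk(Q_\tau) = 3$, hence $\ehk(\mf p) = 3$.

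For $\ehk(\mf m) > 3$: since $F$ is algebraically closed, every maximal ideal containing $\mf p$ has the form $\mf m_\alpha = (x, y, z, t - \alpha)$ for some $\alpha \in F$, and Corollary~\ref{cor: localization} already gives $\ehk(\mf m_\alpha) \ge \ehk(\mf p) = 3$. Suppose toward a contradiction that $\ehk(\mf m_\alpha) = 3$. Then $\mf p R_{\mf m_\alpha}$ is $\ehk$-equimultiple in the three-dimensional excellent (hence formally unmixed) local domain $R_{\mf m_\alpha}$, with $\dim R_{\mf m_\alpha}/\mf p R_{\mf m_\alpha} = 1$ and $\dim (R_{\mf m_\alpha})_\mf p = 2$. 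Theorem~\ref{thm: equimultiple property} therefore forces
\[
(\mf p^{[p^e]} R_{\mf m_\alpha})^* :_{R_{\mf m_\alpha}} (\mf m R_{\mf m_\alpha})^\infty = (\mf p^{[p^e]} R_{\mf m_\alpha})^*
\]
for every $e \ge 0$; equivalently, each $(\mf p^{[p^e]} R_{\mf m_\alpha})^*$ is $\mf p R_{\mf m_\alpha}$-primary. Applying this with $e = 2$, so $\mf p^{[p^e]} = (x^4, y^4, z^4)$, Brenner and Monsky \cite{BrennerMonsky} exhibit an element $h \in R$ that lies in the tight closure of $\mf p^{[4]}$ after localizing at $\mf p$ but for which no multiplier in $R \setminus \mf p$ sends $h$ into $(\mf p^{[4]})^*$. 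Transporting their construction to $R_{\mf m_\alpha}$ yields $h$ (or a minor modification) as an element in $\bigl((\mf p^{[4]} R_{\mf m_\alpha})^* :_{R_{\mf m_\alpha}} \mf m R_{\mf m_\alpha}\bigr) \setminus (\mf p^{[4]} R_{\mf m_\alpha})^*$, contradicting the $\mf p R_{\mf m_\alpha}$-primary property just established. Hence $\ehk(\mf m_\alpha) > 3$.

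The main obstacle is precisely this last transfer step. Brenner and Monsky's result is phrased in the global ring $R$ and its localization $R_\mf p$, whereas the equimultiplicity consequence lives in the intermediate local ring $R_{\mf m_\alpha}$. One must carefully track their witness $h$ and the accompanying test-element denominators through the localization at $\mf m_\alpha$, producing a genuine embedded $\mf m R_{\mf m_\alpha}$-primary component of $(\mf p^{[4]} R_{\mf m_\alpha})^*$; uniform test elements on the excellent hypersurface $R$ and the explicit structure of the Brenner--Monsky example are what make this feasible.
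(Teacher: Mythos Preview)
Your treatment of $\ehk(\mf p) = 3$ matches the paper's and is correct.

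The second half, however, has a genuine gap that you yourself flag but do not close. Brenner--Monsky's result is a statement comparing tight closure in the global ring $R$ with tight closure in $R_\mf p$: they produce $h \in (\mf p^{[4]}R_\mf p)^*$ such that no multiplier $s \in R \setminus \mf p$ pushes $h$ into $(\mf p^{[4]})^*$ \emph{computed in $R$}. What you need is different: an element of the saturation $(\mf p^{[4]}R_{\mf m_\alpha})^* :_{R_{\mf m_\alpha}} \mf m_\alpha^\infty$ that is not in $(\mf p^{[4]}R_{\mf m_\alpha})^*$, where tight closure is computed in the \emph{intermediate} local ring $R_{\mf m_\alpha}$. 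Neither containment transfers in the required direction. From $h \in (\mf p^{[4]}R_\mf p)^*$ you cannot conclude that some $s \notin \mf p$ satisfies $sh \in (\mf p^{[4]}R_{\mf m_\alpha})^*$ --- that would be a localization statement one step down, and Brenner--Monsky is precisely the warning that such statements can fail. Conversely, $sh \notin (\mf p^{[4]})^*_R$ for all $s \notin \mf p$ does not rule out $h \in (\mf p^{[4]}R_{\mf m_\alpha})^*$, since tight closure may only grow under localization. There is also a quantifier problem: you must derive a contradiction for \emph{every} $\alpha \in F$, whereas Brenner--Monsky supplies a single global witness, not one tailored to each $\mf m_\alpha$.

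The paper avoids Brenner--Monsky entirely and instead stays inside Monsky's numerical computation. Assuming $\ehk(\mf m_\alpha) = \ehk(\mf p)$, Corollary~\ref{cor: colength and multiplicity} gives $\length\bigl(R_{\mf m_\alpha}/(t-\alpha,(\mf p^{[p^e]})^*)\bigr) = \length\bigl(R_\mf p/(\mf p^{[p^e]})^*R_\mf p\bigr)$ for all $e$. Feeding this into Lemma~\ref{lemma: tc limit} yields $\ehk\bigl(R_{\mf m_\alpha}/(t-\alpha)\bigr) = \ehk(R_\mf p) = 3$. But $R_{\mf m_\alpha}/(t-\alpha) \cong Q_\alpha$ with $\alpha$ algebraic over $\mathbb{F}_2$, so Theorem~\ref{thm: Monsky computation} gives $\ehk(Q_\alpha) > 3$, a direct contradiction. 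This argument is both shorter and uniform in $\alpha$.
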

\begin{proof}
The residue field of $R_\mf p$ is $K = F(t)$, so if we define $Q_t$ as in Theorem~\ref{thm: Monsky computation}, then
$\widehat{R_\mf p} \cong Q_t$ by Cohen's structure theorem. Thus $\ehk(R_\mf p) = 3$. Since $F$ is algebraically closed, all maximal ideals containing $\mf p$ are of the form $\mf m_\alpha = (\mf p, t - \alpha)$ for $\alpha \in F$.
By the way of contradiction, suppose that $\ehk (\mf m_\alpha) = \ehk (\mf p)$. Then 
by Corollary~\ref{cor: colength and multiplicity} 
\[
\length (R_{\mf m_\alpha}/(t-\alpha, (\mf p^{[p^e]})^*))
= \length (R_\mf p/(\mf p^{[p^e]})^*R_\mf p)
\]
for all $e \geq 1$. Now, Lemma~\ref{lemma: tc limit}
computes that
\[
\ehk (R_{\mf m_\alpha}/(t - \alpha)R_{\mf m_\alpha}) = 
\lim_{e \to \infty} \frac{\length (R_{\mf m_\alpha}/(t-\alpha, (\mf p^{[p^e]})^*))}{p^{2e}}
= \lim_{e \to \infty} \frac{\length (R_\mf p/(\mf p^{[p^e]})^*R_\mf p)}{p^{2e}}
= 3
\]
contradicting Theorem~\ref{thm: Monsky computation} where it was computed that $\ehk (R_{\mf m_\alpha}/(t - \alpha)R_{\mf m_\alpha}) > 3$.
\end{proof}

\begin{corollary}\label{not locally constant}
Let $R = F[x, y, z, t]/(z^4 +xyz^2 + (x^3 + y^3)z + tx^2y^2)$ where $F$ is a field of characteristic $2$. Then the set $\{\mf q \in \Spec R \mid \ehk(\mf q) \leq 3\}$ is not open. In particular, $\ehk$ is not strongly upper semicontinuous.
\end{corollary}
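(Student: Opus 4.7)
The plan is a short proof by contradiction that unpacks Proposition~\ref{HK counterexample}, which has already done all the real work. I will treat the case $F=\overline{\mathbb{F}_2}$ directly, since this is what Proposition~\ref{HK counterexample} supplies; for a general field of characteristic $2$ the statement should follow by the faithfully flat base change $R\to R\otimes_F\overline{F}$, because $\mf p=(x,y,z)$ and its maximal specializations behave well under this extension, and strong upper semicontinuity of $\ehk$ on $\Spec R$ would descend from that of $\Spec(R\otimes_F\overline{F})$.

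Assume for contradiction that $U=\{\mf q\in\Spec R\mid \ehk(\mf q)\le 3\}$ is open. Proposition~\ref{HK counterexample} gives $\ehk(\mf p)=3$ for $\mf p=(x,y,z)$, so $\mf p\in U$, and openness produces an $s\notin\mf p$ with $D(s)\subseteq U$. The image of $s$ in $R/\mf p\cong F[t]$ is a nonzero polynomial with only finitely many roots, so since $F=\overline{\mathbb{F}_2}$ is infinite there is some $\alpha\in F$ on which it does not vanish. The corresponding maximal ideal $\mf m_\alpha=(x,y,z,t-\alpha)$ contains $\mf p$ and avoids $s$, so it lies in $D(s)\cap V(\mf p)\subseteq U$, forcing $\ehk(\mf m_\alpha)\le 3$. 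This contradicts Proposition~\ref{HK counterexample}, which asserts $\ehk(\mf m_\alpha)>3$. Hence $U$ is not open, and the ``in particular'' clause is immediate, since strong upper semicontinuity would by definition make every set of the form $\{\ehk\le a\}$ open.

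I do not expect a serious obstacle: the conceptual work has been completely absorbed into Proposition~\ref{HK counterexample}, which combines Corollary~\ref{cor: colength and multiplicity} with Monsky's computation (Theorem~\ref{thm: Monsky computation}). The only bookkeeping step is the reduction from an arbitrary $F$ of characteristic $2$ to the algebraic closure of $\mathbb{F}_2$, and even this can be avoided by simply restating the corollary over the ground field actually used in Proposition~\ref{HK counterexample}.
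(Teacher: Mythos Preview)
Your argument is correct and matches the paper's approach exactly: assume the set is open, intersect with $V(\mf p)$ to get a nonempty open subset of $\Spec F[t]$, and observe that such a set must contain maximal ideals $\mf m_\alpha$, contradicting Proposition~\ref{HK counterexample}. Your version is in fact slightly more careful than the paper's, since you flag the discrepancy between the general field $F$ in the corollary and the specific $F=\overline{\mathbb{F}_2}$ used in Proposition~\ref{HK counterexample}; the paper's proof does not address this point.
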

\begin{proof}
If the set was open, its intersection with $V((x,y,z))$ should be open and non-empty. 
In particular, all but finitely many maximal ideals $\mf m$ containing $(x,y,z)$ should belong to the open set.
\end{proof}

\section{Further directions}

There are many other invariants derived from Frobenius for which one could study  equimultiplicity. In my opinion, the most promising are dual F-signature (\cite{HochsterYao, Sannai, SmirnovTucker}), the Frobenius--Betti numbers (\cite{AberbachLi, DHNB}), and the Frobenius--Euler characteristics (\cite{SPYGlobal})
because they are already known to be semicontinuous. 

In addition, our understanding of equimultiplicity of  F-signature is still unsatisfactory, 
because there is no intrinsic characterization similar to Theorem~\ref{thm: equimultiple property}. In particular, the strong semicontinuity of other F-invariants still might possible, although unlikely. 

Second, we have no analog of Theorem~\ref{thm: Dade blowup} as  
it is known (see Section~5 of \cite{MPST}) that F-signature and Hilbert--Kunz multiplicity do not behave well after blowing up the maximal 
ideal of an isolated singularity. However, it would be naive to expect 
that Hilbert--Kunz multiplicity will suffice alone, the resolving invariant 
in characteristic $0$ is far more complex. 

Last, there is a related notion of equimultiplicity for singularities in families, see 
 \cite{Lipman}. This side has not been touched much for F-invariants, 
it is only known that Hilbert--Kunz multiplicity is semicontinuous \cite{Affine}.

%

\bibliographystyle{plain}
\bibliography{refs}

\end{document}